\title{Existence and sharpness of the phase transition for the frog model on transitive graphs}
\author{
  Omer Angel
  \thanks{University of British Columbia.  Research was supported in part by NSERC, SLMath Clay senior scholarship, and
Magdalen college, Oxford. Email: angel@math.ubc.ca }
  \and
   Daniel de la Riva
  \thanks{University of British Columbia. Research was supported in part by NSERC. Email: delariva@math.ubc.ca }
  \and 
   Jonathan Hermon
  \thanks{University of British Columbia. Research was supported in part by NSERC. Email: jhermon@math.ubc.ca }
  \and 
   Yuliang Shi
  \thanks{University of British Columbia. Research was supported in part by NSERC. Email: yuliang@math.ubc.ca }
}
\date{}
  \crefname{theorem}{Theorem}{Theorems}
  \crefname{thm}{Theorem}{Theorems}
  \crefname{mainthm}{Theorem}{Theorems}
  \crefname{lemma}{Lemma}{Lemmas}
  \crefname{lem}{Lemma}{Lemmas}
  \crefname{remark}{Remark}{Remarks}
  \crefname{prop}{Proposition}{Propositions}
  \crefname{defn}{Definition}{Definitions}
  \crefname{corollary}{Corollary}{Corollaries}
  \crefname{section}{Section}{Sections}
  \crefname{figure}{Figure}{Figures}
\newtheorem{thm}{Theorem}[section]
\newtheorem{theorem}[thm]{Theorem}
\newtheorem{lemma}[thm]{Lemma}
\newtheorem{claim}[thm]{Claim}
\newtheorem{proposition}[thm]{Proposition}
\newtheorem{definition}[thm]{Definition}
\newtheorem{remark}[thm]{Remark}
\newtheorem{question}[thm]{Question}
\newtheorem{conjecture}[thm]{Conjecture}
\numberwithin{equation}{section}
\renewcommand{\P}{\mathbb P}
\newcommand{\Z}{\mathbb Z}
\newcommand{\E}{\mathbb E}
\newcommand{\R}{\mathbb R}
\newcommand{\N}{\mathbb N}
\newcommand{\eps}{\varepsilon}
\newcommand{\bP}{\P}
\newcommand{\bE}{\E}
\newcommand{\cU}{\mathcal{U}}
\newcommand{\cK}{\mathcal{K}}
\newcommand{\cF}{\mathcal{F}}
\newcommand{\cV}{\mathcal{V}}
\newcommand{\cW}{\mathcal{W}}
\newcommand{\cG}{\mathcal{G}}
\newcommand{\cM}{\mathcal{M}}
\newcommand{\T}{\mathbb{T}}
\newcommand{\1}{\mathbbm{1}}
\newcommand{\norm}[1]{\left\lVert#1\right\rVert}
\pgfplotsset{compat=1.18} 
\begin{document}

\maketitle

\begin{abstract}
We consider a slight modification of the frog model. For a given graph, each vertex has $\mathrm{Poisson}(\lambda)$ particles (or frogs). At time zero, only the particles at the origin are active, and all the other particles are sleeping. Each active particle performs an independent, continuous-time simple random walk, becoming inactive after time $t$.  Once an active frog jumps to a vertex, it activates all of its particles. The survival of active particles can be studied as a dependent percolation model with two parameters $\lambda$ and $t$.  In the present work, we establish the existence of a phase transition with respect to each parameter for non-amenable graphs of bounded degrees and quasi-transitive graphs of superlinear polynomial growth, as well as prove the sharpness of the phase transition for transitive graphs.  
\end{abstract}

\section{Introduction}\label{introduction}
The frog model has been studied extensively in the probability community since its introduction in \cite{telcs1999branching}. 
In the present work, we consider the following variant of the \emph{frog model with death}, credited to Itai Benjamini and popularized by the works of Alves, Machado, and Popov (see \cite{FrogShape,FrogArt}), and independently by Ram\'irez and Sidoravicius (see \cite{ramirez2004asymptotic}). 
Let $G:=(V,E)$ be a graph, where $V$ is the set of vertices (sometimes called sites), and $E$ is the set of edges. 
Throughout this paper, we assume implicitly that $G$ is infinite, connected, and locally finite. At each site of $G$, there is a random number of particles distributed according to independent $\text{Poisson}(\lambda)$ random variables. 
We let $\mathbf{0} \in V$ denote the origin of the graph. Initially, all particles are inactive except for those at the origin. 
Each active particle survives for a fixed time $t$, performing a continuous-time simple random walk on the graph $G$. Once an active particle jumps to a site with inactive particles, it instantly becomes active. Other variations can also be considered; for instance, the particles can have geometric or exponential random lifetimes, but we chose a fixed lifetime since it is consistent with the previous literature.
The main goal of this paper is to establish the existence of a phase transition (the process almost surely dies out or has a positive probability of continuing indefinitely) for this model on large classes of graphs.
This was originally proven on $\Z^{d}$ and the degree-$d$ homogeneous tree $\T_{d}$ in \cite{FrogArt}. 
Moreover, we establish the sharpness of the phase transition for vertex-transitive graphs. 

To state our results precisely, we first define the model rigorously.
For each $x \in V$, the number of particles at site $x$, denoted by $\eta_{x}$, is an independent random variable distributed according to $\mathrm{Poisson}(\lambda)$. We denote the collection of particles at site $x$ by $\mathcal{P}(x)=(\omega_{x}^{1},\ldots,\omega_{x}^{\eta_{x}})$.
For each particle $(\omega_{x}^{i})_{1\leq i \leq \eta_{x}}$, we have $(Y_{x}^{i}(s))_{0\leq s \leq t}$, which is an independent rate 1 continuous-time simple random walk\footnote{That is, each particle has an $\mathrm{Exp}(1)$ clock, and when it rings, the particle uniformly selects one of its neighbors.} starting at $x$ on $G$ with lifespan $t$. Initially, only the particles at the origin (which may form an empty set) are activated and perform their respective random walks. Once a particle visits another vertex, it activates all particles at that vertex, leading to a chain of activations. 
We denote by $\mathbb{P}_{\lambda,t}$ the probability measure associated with this process. The probability measure associated with the simple random walk starting at $x$ is denoted by $\mathbb{P}_{x}$.

We say that a realization of the frog model survives if at all times there exists at least one active particle. 
Equivalently (almost surely), infinitely many vertices are visited.
This can be phrased as a percolation problem as follows.
For $x\in V,$ every particle $\omega_{x}^{i}$ has a set of sites that it visits up to time $t\geq 0$:
\begin{equation}\label{trajectory}
\mathrm{R}^{i}_{x} := \{ Y_{x}^{i}(s): 0\leq s \leq t\}\subset V.
\end{equation}
We define the range of $x$ as the union over the trajectories of all particles starting at $x$:
\begin{align} \label{def: range_of_vertex}
\mathcal{R}_{x}:=\begin{cases}
            \bigcup\limits_{i=1}^{\eta_x}\mathrm{R}_{x}^{i}, & \text{if $\eta_x>0$}\\
            \{x\}, & \text{if $\eta_x=0$}.
         \end{cases}
\end{align}
This range is relevant for the process only when $x$ is visited by an active particle, which activates all its particles. 
However, in some arguments, it is convenient to sample the trajectories of all particles, regardless of whether they are ever activated. With this in mind, one may observe that the survival of the frog model is equivalent to the cluster of $\mathbf{0}$ being infinite in a certain directed percolation model. From each site $x$, we draw an oriented edge represented by $\Rightarrow$ from $x$ to each site in $\mathcal{R}_{x}$. This gives rise to a directed graph on the vertices of $G$.

Associated with this percolation model, we define the following notation:
For any $S \subseteq V,$ we say that $x$ is connected to $y$ in $S$ if there exists a sequence of vertices $\{v_{0},\ldots,v_{k}\}\subseteq S,$ with $v_{0}=x, v_{k}=y$ and each directed edge $(v_{\ell},v_{\ell+1})$ is open, represented by the connection $v_{\ell} \Rightarrow v_{\ell+1}$. We denote this event by $x \xrightarrow{S} y$. 
Furthermore, for $A \subset V,$ we denote by $x\xrightarrow{S} A$ the event that $x$ is connected to some vertex in $A$. 
When $S=V$, we simply write $x\rightarrow y$ and $x\rightarrow A$. Finally, we define $B_{x}(n):=\{y \in V: d_{G}(x,y)\leq n\},$ where $d_{G}(\cdot,\cdot)$ is the graph distance, and set $B(n):= B_{\mathbf{0}}(n)$. We denote by $\mathbf{0} \rightarrow \infty$ the event that $\mathbf{0}$ is connected to $B(n)^{c}$ for every $n\geq1$. 
This event is a.s. the same as the event that, at any given time, there is at least one active particle in the system. 
For this reason, this event is sometimes called \textit{survival of the infection} in the literature (see Definition 1.1 in \cite{FrogArt} and Definition 3.5 in \cite{multiscale}). 

In the following we shall fix one of the parameters $t,\lambda$ of the model and vary the other.
Define the critical parameter $\lambda_{c}(t)$ (respectively $t_{c}(\lambda)$) for the frog model as 
\begin{align*}
    \lambda_{\mathrm{c}}(t) &:= \sup \{ \lambda \geq 0: \bP_{\lambda, t}(\mathbf{0} \rightarrow \infty) = 0 \}, \\
    t_{\mathrm{c}}(\lambda) & := \sup \{ \  t \geq 0: \bP_{\lambda, t}(\mathbf{0}\rightarrow \infty) = 0 \},
\end{align*}
and we observe that these quantities are well defined since the survival of the process is clearly monotone in $\lambda$ and $t.$

The study of phase transitions has been one of the most important problems in probability theory, particularly for Bernoulli percolation and statistical mechanics models such as the Ising model. Many interesting techniques have been developed to address questions about the existence, sharpness, and continuity of phase transitions. Although much work remains to be done, it is in the interest of the community to seek answers for different classes of models. The frog model, which admits a natural percolation interpretation as described above, is often interpreted as a model for the spread of infections or rumors. Thus, it presents a different structure and motivation from what has been analyzed in the percolation literature. Although some previous works on the model answered phase transition questions for $\Z^{d}$ and the degree-$d$ homogeneous tree $\T_{d}$ (see \cite{FrogArt}), other works considered questions of recurrence and transience \cite{hoffman_one,hoffman_two,hoffman_three}, and more recent works studied the model on various finite graphs \cite{froghermon,hermon2018frogs,MR4031108}. This is the first time the model has been systematically studied as a percolation model, expanding both the scope of features of the model that are studied and the collection of graphs considered. We begin by presenting a conjecture, which is partially solved in this paper, concerning the existence of a phase transition for the frog model that further motivates this work. We introduce the following definitions.

Recall that a graph $G$ is called \emph{quasi-transitive} if the action of the automorphism group $\mathrm{Aut}(G)$ on $V$ has only finitely many orbits, and \emph{vertex-transitive} if there is only one such orbit.

For $x\in V$, we let $g_{x}(n) := |B_{x}(n)|$, and we set $g(n):= g_{\mathbf{0}}(n)$. A graph is said to have \textit{linear growth} if $\limsup_{n \to \infty} g(n)/n < +\infty$. Due to connectivity, this limit is independent of the choice of $x$. Conversely, a graph exhibits \textit{superlinear growth} if $\limsup_{n \to \infty} g(n)/n = +\infty$. Our central conjecture regarding the existence of the phase transition for the frog model is formulated as follows:

\begin{conjecture} \label{conj: exist_main}
    If $G=(V,E)$ is a transitive graph with superlinear growth, then for every $t>0$, we have $0 < \lambda_{\mathrm{c}}(t) < \infty$, and for every $\lambda>0$, we have $0 < t_{\mathrm{c}}(\lambda) < \infty$. 
\end{conjecture}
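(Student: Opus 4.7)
The plan is to split the conjecture into a ``small-parameter'' pair ($\lambda_c(t)>0$ and $t_c(\lambda)>0$), which I would establish on any bounded-degree graph by a first-moment branching argument, and a ``large-parameter'' pair ($\lambda_c(t)<\infty$ and $t_c(\lambda)<\infty$), which I would establish by a coarse-graining scheme that exploits the superlinear growth hypothesis.

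First I would handle the lower bounds by dominating the set of ever-activated sites by a subcritical branching process. Conditional on activating a vertex $x$, it spawns $\eta_x\sim\mathrm{Poisson}(\lambda)$ independent continuous-time simple random walks of lifespan $t$; each such walk makes $\mathrm{Poisson}(t)$ jumps and visits at most one new vertex per jump, so the expected number of new vertices it discovers is at most $t$. Hence $x$ produces at most $\lambda t$ new activations in expectation. Iterating generation by generation, the expected total number of activations is at most $\sum_{n\ge 0}(\lambda t)^{n}$, which is finite whenever $\lambda t < 1$. Markov's inequality then forces $\mathbb{P}_{\lambda,t}(\mathbf{0}\to\infty)=0$, yielding $\lambda_c(t)\ge 1/t$ and $t_c(\lambda)\ge 1/\lambda$ for every $\lambda,t>0$.

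For the upper bounds I would carry out a renormalization at a suitably chosen scale $L$, partitioning $V$ into finite blocks compatible with the action of $\mathrm{Aut}(G)$. I would call a block \emph{good} if, whenever one of its vertices is activated, the infection is forced to activate each neighboring block; crucially, goodness depends only on the walk data inside a bounded enlargement of the block, so the field of good blocks has finite-range dependence and, by Liggett--Schonmann--Stacey domination, dominates Bernoulli site percolation with a parameter that tends to $1$ as the marginal good-block probability does. The remaining task is then to show that for $\lambda$ or $t$ sufficiently large, the good-block probability can be driven arbitrarily close to $1$. On a polynomial-growth graph of degree $d\ge 2$, this follows, after Gromov's theorem puts us on a virtually nilpotent Cayley graph, from the diffusive (and transient when $d\ge 3$) behavior of simple random walk, which guarantees the range of one particle covers a non-trivial fraction of a block. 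On a non-amenable transitive graph, the exponential isoperimetric inequality forces the infection to escape any block ballistically.

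The hardest case will be transitive graphs of intermediate growth, i.e., strictly sub-exponential but super-polynomial. There one loses both the structural description given by Gromov's theorem and the Cheeger-type isoperimetric bound from non-amenability, and it is not clear at which scale $L$ the good-block probability can be pushed to $1$. This is presumably why Conjecture~\ref{conj: exist_main} is settled in the present paper only for the non-amenable bounded-degree class and for the quasi-transitive polynomial-growth class, with the intermediate-growth transitive case remaining as the main open piece.
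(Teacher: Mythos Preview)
Your diagnosis of the landscape is accurate: the statement is a conjecture, the paper only establishes the non-amenable and superlinear-polynomial-growth cases, and intermediate growth is precisely the remaining gap. Your branching-process argument for $\lambda_c(t)\ge 1/t$ and $t_c(\lambda)\ge 1/\lambda$ is exactly the paper's Proposition~\ref{notzero}.

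Two substantive differences in approach are worth flagging. First, for $\lambda_c(t)<\infty$ you propose the same renormalization machinery as for $t_c(\lambda)<\infty$, but the paper obtains this far more cheaply and in far greater generality: by Poisson thinning one declares $\{x,y\}$ open if some frog at $x$ makes its first jump to $y$ and vice versa, obtaining independent Bernoulli bond percolation whose parameter tends to $1$ as $\lambda\to\infty$ (Proposition~\ref{superlinear}). Combined with the theorem of \cite{PercolationPhaseTran} that $p_c(G)<1$ for every quasi-transitive superlinear-growth graph, this already gives $\lambda_c(t)<\infty$ on \emph{all} such graphs, including the intermediate-growth ones your scheme cannot reach. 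Second, for $t_c(\lambda)<\infty$ in the non-amenable case the paper does not renormalize at all: there is no natural block structure, and instead one runs an exploration process, using the spectral-radius bound of \cite{localityconjecture} to show that from a positive fraction of the already-visited set a single frog escapes and visits $\Theta(t)$ fresh vertices (Lemmas~\ref{lemma: spectral_lower_bound_of_escape_probability}--\ref{non_amenable_trace_lower}). Your ``ballistic escape from a block'' picture is morally related but would need a block decomposition that non-amenable graphs in general do not carry.

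For the polynomial-growth case your outline is close in spirit to the paper's, but one point needs care: your claim that ``goodness depends only on the walk data inside a bounded enlargement of the block'' is not automatic, since frogs have Poisson$(t)$ jumps and can leave any fixed enlargement with positive probability. The paper resolves this by Poisson-splitting the particles into two independent densities $\lambda/2$, using the first half only through trajectories confined to the ball $B_v(a/3)$ (so that good-vertex events at distinct net points are genuinely independent), and the second half to reach neighboring balls. Without such a device your good-block field is not finite-range and Liggett--Schonmann--Stacey does not apply directly.
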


Conjecture \ref{conj: exist_main} is a natural analog of the existence of phase transition results in the percolation literature.
It is well known from a Peierls-type argument that for graphs with degrees bounded by $\Delta$, we have $p_{\mathrm{c}}(G) \ge 1/(\Delta-1)$, where $p_{\mathrm{c}}(G)$ is the critical parameter in the Bernoulli percolation model on $G$ (see Section \ref{preliminaries} for a precise definition).
Benjamini and Schramm first conjectured in \cite{MR1423907} that for any quasi-transitive graph $G$ with superlinear growth, we must have $p_{\mathrm{c}}(G) < 1$. This conjecture has been resolved through the joint efforts of many research papers, including \cite{MR1622785, MR1634419, MR3865659, MR1338287, MR3616205, MR1841989, MR3607808, MR3520023} and especially the work of \cite{PercolationPhaseTran}.

We formulate Conjecture \ref{conj: exist_main} in the same spirit as the celebrated conjecture of Benjamini and Schramm for percolation, but adapted to the frog model context. In this paper, we establish significant portions of this conjecture for quasi-transitive graphs satisfying different conditions that we introduce below.

\begin{definition}\label{def:poly_growth}
 We say that $G$ has \emph{polynomial growth}\footnote{Note that, since the graph is connected, the definition of polynomial growth is independent of the choice of the center vertex: if there is some constant $C = C_{\boldsymbol{0}}$ such that $g_{\boldsymbol{0}}(n) \le C n^d$, then, for any vertex $x$, there exist $C_x$ such that $g_x(n) \le C_x n^d$.} if there exist $C$ and $d$ such that for all $n$
\begin{align*}
g(n) \le C n^d.
\end{align*}
\end{definition}

\begin{definition}\label{def: amenable}
 We say that a graph of bounded degree is \emph{amenable} if it admits a sequence of finite vertex sets \(\{F_n\}\) satisfying the \emph{Følner condition}, i.e., \(|\partial F_n| / |F_n| \rightarrow 0 \) as $n \rightarrow \infty$, where \(\partial F\) denotes the vertex boundary of \(F\). Such a sequence \(\{F_n\}\) is called a \emph{Følner sequence}. We say that a graph of bounded degree is \emph{non-amenable} if there is no Følner sequence.\par
\end{definition}

\noindent Our existence of the phase transition results are summarized in the following two theorems:

\begin{theorem}\label{main:existence}
    Let $G = (V, E)$ be a fixed graph. Then
    \begin{itemize}[nosep,left=5pt]
        \item[1.] For every $t, \lambda > 0$ we have that  $\lambda_{\mathrm{c}}(t) > 0$ and $t_{\mathrm{c}}(\lambda) > 0$;
        \item[2.] If $G$ has bounded degrees and satisfies that $p_{\mathrm{c}}(G)<1$, then $\lambda_{\mathrm{c}}(t) < \infty$ for every $t > 0$;
        \item[3.] If $G$ has bounded degrees and is non-amenable, then $t_{\mathrm{c}}(\lambda) < \infty$ for every $\lambda > 0$.
    \end{itemize}
\end{theorem}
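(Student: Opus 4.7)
\textbf{Part 1.} I plan to dominate the forward exploration of the infection cluster of $\mathbf{0}$ by a Galton-Watson branching process. Since each new vertex in $\mathcal{R}_x$ must be reached by at least one jump of some particle starting at $x$,
\[
|\mathcal{R}_x| \leq 1 + \sum_{i=1}^{\eta_x} N_i^{(x)}, \qquad N_i^{(x)} \sim \mathrm{Poisson}(t),
\]
with the $N_i^{(x)}$ independent of $\eta_x \sim \mathrm{Poisson}(\lambda)$. Wald's identity gives $\E[|\mathcal{R}_x|] - 1 \leq \lambda t$. In a breadth-first exploration of the directed cluster of $\mathbf{0}$, the number of new vertices produced by an active $x$ is at most $|\mathcal{R}_x| - 1$, and the $\mathcal{R}_x$'s at distinct vertices are independent. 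Thus the total progeny of the exploration is stochastically dominated by a Galton-Watson tree of mean offspring $\lambda t$, which is a.s.\ finite when $\lambda t < 1$. Hence $\P_{\lambda, t}(\mathbf{0} \to \infty) = 0$, giving $\lambda_{c}(t) \geq 1/t > 0$ and $t_{c}(\lambda) \geq 1/\lambda > 0$.

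\textbf{Part 2.} My plan is to couple with an independent site percolation on $G$. For each $x \in V$, define
\[
D_x := \{ B_x(1) \subseteq \mathcal{R}_x \},
\]
the event that every neighbour of $x$ is visited by some particle of $x$ within time $t$. The family $(D_x)_{x \in V}$ depends on pairwise disjoint collections of particles and is therefore mutually independent, forming an independent Bernoulli site percolation on $G$. A single particle at $x$ reaches a given neighbour of $x$ within time $t$ with probability at least $(1 - e^{-t})/\Delta$ (consider only the event that its first Poisson jump points that way), so Poissonisation and a union bound yield
\[
\P(D_x) \geq 1 - \Delta \exp\!\big(-\lambda(1 - e^{-t})/\Delta\big) \xrightarrow{\lambda \to \infty} 1.
\]
On bounded-degree $G$ the Grimmett-Stacey inequality gives $p_{c}^{\mathrm{site}}(G) < 1$ whenever $p_{c}(G) < 1$. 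For $\lambda$ large enough, $\P(D_x) > p_{c}^{\mathrm{site}}(G)$, so the $D$-site percolation has an infinite open cluster and $\mathbf{0}$ lies in it with positive probability. On that event the infection propagates along the cluster (whenever $D_x$ holds at an already-infected $x$, every neighbour of $x$ becomes infected), so $\mathbf{0} \to \infty$; hence $\lambda_{c}(t) < \infty$.

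\textbf{Part 3.} The strategy of Part 2 cannot be adapted: with $\lambda$ fixed, $\P(D_x)$ does not tend to $1$ as $t \to \infty$, since a single random walk has uniformly bounded probability of visiting any given neighbour. Instead I plan to use that on a non-amenable bounded-degree graph the SRW has positive speed, i.e.\ there exists $v = v(G) > 0$ such that $d_G(Y_0, Y_s)/s \to v$ almost surely. Since the walk's path from $Y_0$ to $Y_t$ lives in the induced subgraph on the range $\mathrm{R}^{1}_x$, we always have $|\mathrm{R}^{1}_x| \geq d_G(Y_0, Y_t) + 1$, so positive speed yields $\E[|\mathrm{R}^{1}_x|] \geq c\,t$ for large $t$ and some $c = c(G) > 0$. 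Consequently, when $\eta_x \geq 1$, the vertex $x$ activates on average $\sim c\lambda t$ new $\mathrm{Poisson}(\lambda)$-populated sites, which exceeds $1$ once $t$ is sufficiently large. This suggests a supercritical Galton-Watson process embedded in the infection.

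The hard part is that the ranges of different activated vertices may overlap, so the expected-offspring bound above does not immediately deliver a valid branching-process lower bound. I plan to handle this via a renormalisation argument: partition $V$ into well-separated finite regions of large but fixed size $K$, and show that from each activated region the infection crosses into a positive fraction of its neighbouring regions with probability close to $1$ for $t$ large (using positive speed of SRW to traverse between regions together with $\lambda > 0$ to guarantee Poisson-populated particles in each region). This reduces the problem to supercritical site percolation on the coarser block graph and yields $\P_{\lambda, t}(\mathbf{0} \to \infty) > 0$ for $t$ large, whence $t_{c}(\lambda) < \infty$. An alternative would be to extract a genuine Galton-Watson subtree along descendants pairwise far apart in $G$, whose ranges are approximately disjoint; in either approach the non-amenable isoperimetric inequality is what ensures the effective branching factor strictly exceeds $1$ for $t$ large.
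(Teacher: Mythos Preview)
Your Parts~1 and~2 are correct and essentially match the paper: Part~1 is the same branching comparison (the paper's Proposition~2.1), and Part~2 is a harmless variant, using site percolation plus Grimmett--Stacey where the paper uses an independent bond percolation built from ``first jump goes to $y$'' events.

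Part~3, however, has a real gap. You correctly identify the obstruction (range overlap kills a naive branching lower bound), but neither of your proposed fixes is a proof. The renormalisation idea requires partitioning $V$ into well-separated finite regions and comparing to site percolation on a coarse-grained graph; this is precisely the strategy the paper uses for \emph{quasi-transitive graphs of polynomial growth}, where structure theorems (Trofimov, Tessera--Tointon) supply a group-net. On an arbitrary non-amenable bounded-degree graph there is no such block structure to appeal to, and you give no indication of how to build one. Likewise, ``extract a Galton--Watson subtree along descendants pairwise far apart'' is not an argument until you explain how to guarantee such descendants exist with controlled probability at every stage. Positive speed alone does not do this: it tells you a single walk gets far from its start, not that it lands far from the \emph{entire} previously explored set, which is what you need to avoid overlap.

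The paper's route is genuinely different. It runs an exploration that at each step reveals the frogs at one activated-but-unrevealed vertex, chosen not arbitrarily but from the set $G_{\mathcal W}$ of vertices $x$ with $\bP_x(|R(t)\cap\mathcal W^c|>\alpha t)\ge (1-\rho)/4\mathcal K$, where $\mathcal W$ is the set of vertices reached so far. The key input is the spectral escape lemma $\bP_{\pi_A}(T_A^+=\infty)\ge 1-\rho$ (from \cite{localityconjecture}), which combined with the $\cK$-controlled weights yields $|G_{\mathcal W}|\ge \tfrac{1-\rho}{2\mathcal K}|\mathcal W|$ for every finite $\mathcal W$. Thus as long as $|\mathcal W_k|>\tfrac{2\mathcal K}{1-\rho}\,k$ there is always a good unrevealed vertex to explore, and each such exploration adds at least $\alpha t$ new vertices to $\mathcal W$ with probability bounded below; for $t$ large this dominates a biased random walk that stays positive forever with positive probability. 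This argument uses non-amenability directly through the spectral radius, with no block decomposition at all.
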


Items $(1)$ and $(2)$ are proven in Section \ref{preliminaries} and follow standard arguments of stochastic domination. Since it was shown in \cite{PercolationPhaseTran} that any quasi-transitive graph with superlinear growth satisfies $p_{\mathrm{c}}(G)<1,$ item (2) of the above theorem implies that for any $t>0$ we have $\lambda_{\mathrm{c}}(t)<\infty$ for such graphs. Item $(3)$ is more involved and follows a strategy similar to the ones developed in \cite{localityconjecture,hermon_morris}, which is shown at the end of \cref{sec: existence}.  Under the assumption of quasi-transitivity, we also derive the following: 

\begin{theorem}\label{main_quasi:existence}
    Let $G = (V, E)$ be a quasi-transitive graph. Then, 
    \begin{itemize}[nosep,left=5pt]
        \item[1.] If $G$ has linear growth, then $\lambda_{\mathrm{c}}(t) = t_{\mathrm{c}}(\lambda) = \infty$ for every $t, \lambda > 0$;
        \item[2.] If $G$ has superlinear polynomial growth, then $t_{\mathrm{c}}(\lambda) < \infty$ for every $\lambda > 0$.
    \end{itemize}
\end{theorem}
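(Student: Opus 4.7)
My plan is to invoke the structure theorem for quasi-transitive graphs of linear growth: every such graph has at most two ends and is quasi-isometric to $\Z$, so one can fix a Lipschitz projection $\pi: V \to \Z$ with uniformly bounded fibers. The key probabilistic input is a super-exponentially light tail for the displacement of a single continuous-time walk of lifetime $t$: since the number of jumps is $\mathrm{Poisson}(t)$-distributed,
\begin{equation*}
\P\bigl(\text{a single walk leaves } B_u(k)\bigr) \;\le\; \P\bigl(\mathrm{Poisson}(t) \ge k\bigr) \;\lesssim\; (et/k)^{k}.
\end{equation*}
Combining with $\mathrm{Poisson}(\lambda)$ particles per site gives $p_k := \sup_u \P(\mathcal{R}_u \not\subset B_u(k)) \le 1 - \exp(-C\lambda (et/k)^k)$, still super-exponentially small. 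Because the ranges $\{\mathcal{R}_u\}_{u \in V}$ are independent across distinct source vertices, a union bound over simple paths in the directed frog percolation yields
\begin{equation*}
\P(\0 \to v) \;\le\; \sum_{\ell \ge 1}\, [P^{\ell}]_{\0, v}, \qquad P_{uv} = p_{d(u, v)},
\end{equation*}
which, after passing to the one-dimensional projection (and absorbing the bounded-fiber multiplicity into constants), becomes a one-dimensional convolution bound. The super-exponential decay of $p_k$ lets me pick $s > 0$ for which the tilted total mass $\sum_k p_k\, e^{-ks}$ is strictly less than $1$; a standard exponential-tilting argument then gives $\P(\0 \to v) \le C\, e^{-s\, d(\0, v)}$, which is summable over $v$. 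Hence $\E[|\text{cluster of } \0|] < \infty$ and the cluster is a.s.\ finite, so $\lambda_{\mathrm{c}}(t) = t_{\mathrm{c}}(\lambda) = \infty$.

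\textbf{Part 2 (superlinear polynomial growth).} It suffices to prove that, for every fixed $\lambda > 0$, the frog model survives for all sufficiently large $t$. The starting point is a Gaussian heat-kernel estimate of Hebisch--Saloff-Coste type: on any quasi-transitive graph of polynomial growth of degree $d > 1$, it provides a uniform constant $c_0 > 0$ with $\P_u(\tau_v \le t) \ge c_0$ for every $v \in B_u(\sqrt t)$ and $t$ large. Consequently,
\begin{equation*}
\P(u \Rightarrow v) \;\ge\; 1 - \exp(-c_0 \lambda) \;=:\; p_\star(\lambda) \;>\; 0 \quad\text{for every } v \in B_u(\sqrt t).
\end{equation*}
I would then renormalize at scale $R = \sqrt t$: tile $G$ into boxes of diameter $\Theta(R)$ indexed by a coarse-grained graph $G'$, and declare a box $B_x$ ``good'' when, conditional on it being entered by an active frog, at least one frog inside $B_x$ hits every neighbouring box before the end of its lifetime. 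Using the hitting-probability lower bound together with concentration of Poisson counts, the probability of being good tends to $1$ as $t \to \infty$. The good-box process thus dominates a supercritical Bernoulli bond percolation on $G'$, which percolates because $p_{\mathrm{c}}(G') < 1$ (itself inherited from $p_{\mathrm{c}}(G) < 1$, known for superlinear-growth quasi-transitive $G$ by \cite{PercolationPhaseTran}), giving an infinite good-box cluster and hence $\P_{\lambda, t}(\0 \to \infty) > 0$.

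\textbf{Main obstacle.} The subtle step in both parts is the dependence structure of the frog-model edges. For Part 1, the independence across distinct sources together with the super-exponential tail makes the exponential-tilting argument clean, and the remaining work is essentially book-keeping around the bounded fibers of the quasi-$\Z$ projection. For Part 2 the main difficulty is the block renormalization: the frog trajectories do not respect box boundaries, so the good-box events are not independent, and the coupling with supercritical Bernoulli percolation on $G'$ requires either a careful sprinkling construction or a positive-association/FKG argument to preserve block independence while keeping the good-box probability above $p_{\mathrm{c}}(G')$.
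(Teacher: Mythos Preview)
The path-counting/tilting argument has a genuine gap. The union bound $\P(\0\to v)\le\sum_{\ell}[P^\ell]_{\0,v}$ is only useful when the row sum $\sum_w P_{uw}$ is below $1$, and here that row sum is essentially $\E[|\mathcal{R}_u|]-1$, of order $\lambda t$, hence arbitrarily large. Exponential tilting cannot rescue this on a quasi-$\Z$ graph: the step kernel is symmetric in the $\Z$-coordinate, so the conjugated row sum $\sum_{k\in\Z}p_{|k|}e^{sk}=2\sum_{k\ge1}p_k\cosh(sk)$ is \emph{minimized} at $s=0$. The one-sided sum $\sum_{k\ge1}p_k e^{-sk}$ that you make small is not the row sum of any matrix whose powers dominate $[P^\ell]_{\0,v}$, because activation chains may move back and forth along $\Z$. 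The paper's argument is of a completely different nature (Borel--Cantelli barriers, Proposition~\ref{lineargrowth}): linear growth yields infinitely many radii $n_j$ with $|S(n_j)|\le K$; reversibility together with $\sum_x\P_y(\tau_x\le t)\le t$ then gives $\inf_j\P(\text{no particle from }B(n_j)\setminus B(n_{j-1})\text{ exits }B(n_j))>0$, so by the second Borel--Cantelli this holds for infinitely many $j$; a summable Poisson tail handles particles from the inner ball, and the two together trap $\mathcal C_{\0}$ in some $B(n_j)$ almost surely.

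\textbf{Part 2.} The renormalization is in the right spirit but is missing the step that actually pushes the good-box probability to $1$. When a box is first reached, only a single vertex is activated, carrying $\mathrm{Poisson}(\lambda)$ particles; this is zero with probability $e^{-\lambda}$ regardless of $t$, so your ``good'' event as written is bounded away from $1$ no matter how large $t$ is. The paper inserts an internal-spreading lemma (Proposition~\ref{prop: poly_internal}): via an exploration inside $B_v(a/3)$ driven by a Paley--Zygmund bound on the range of a single walk (Lemma~\ref{lemma: random_walk_range}), with probability $\ge1-Ce^{-c|B_v(a/3)|}$ the ball contains a \emph{good} vertex, one whose activation using only particles inside the ball spreads to at least a quarter of it; only then do the $\Theta(a^d)$ activated vertices provide enough particles to hit every vertex of every neighbouring ball with high probability (Proposition~\ref{prop: poly_neighboring}). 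The dependence problem you flag is solved not by sprinkling or FKG but by Poisson thinning: split each site's particles into two independent $\mathrm{Poisson}(\lambda/2)$ families, use the first for internal spreading and the second for crossing; the resulting ``open'' event for $v$ then depends only on particles initially in $B_v(a/3)$, and since the balls of the $(a,\beta a)$-group net are disjoint, the open events are genuinely independent. Finally, the coarse graph on which one percolates is not an ad hoc tiling but the group net built from the Trofimov/Tessera--Tointon structure theory (Proposition~\ref{prop:net-polynomial}), which is what guarantees $p_c^{\mathrm{site}}\le3/4$ uniformly in the scale $a$.
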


Item $(1)$ is proved in \cref{preliminaries} and follows a Borel-Cantelli-type argument, similar to the proof that $p_{c}(G)=1$ for quasi-transitive graphs of linear growth. Item $(2)$ is the most involved argument, and it is based on a renormalization approach that is detailed in \cref{sec: existence}. This renormalization is possible due to a structural theorem proven for graphs of polynomial growth in \cite{tessera-tointon}, which was later used to prove the locality conjecture in \cite{MR4529920}.
 

\begin{remark}\label{remark_heavy_tails}
  Item $(2)$ in Theorem \ref{main_quasi:existence} can be generalized beyond simple random walks: let $G = (V, E)$ be a Cayley graph of superlinear polynomial growth and let $d_{G}(x, y)$ denote the graph distance between $x$ and $y$ on $G$. Let $\alpha > 0$ and $d\geq2$ be the growth exponent of $G$ (see Equation \eqref{gromov}). For every $x,y \in V$, define the one-step random walk jump kernel by
    \begin{align*}
      P(x, y) \asymp \frac{1}{d_{G}(x, y)^{d + \alpha}}.
    \end{align*}
   We consider the frog model where active particles follow the rate-1 continuous-time version of $P$. At the end of Section \ref{sec: existence}, we provide further details on why this generalization holds.
 \end{remark}

One can also ask whether the phase transition is sharp. We show that the answer is positive for vertex-transitive graphs. 
Indeed, the next result is even more general, in the sense that \textbf{reversibility} of the random walks is not required in our proof, a condition that is necessary for the definition of Bernoulli percolation. We let $\vec{G}:= (V,\vec{E})$ denote a directed vertex-transitive graph, and for every $x\in V,$ let  $\mathcal{C}_{x}$ be the (outward) cluster of $x,$ that is, all vertices $y \in V,$ such that $x \rightarrow y$. We denote its cardinality by $|\mathcal{C}_{x}|$. 

\begin{theorem}\label{mainsharpness}
    Let $\vec{G} = (V, \vec{E})$ be a directed vertex-transitive graph. The following statements hold:
    \begin{itemize}[nosep,left=5pt]
        \item[1.] For every $t > 0$, there exists a constant $K_{1}(\lambda_{\mathrm{c}}(t))$ such that for any $\lambda$ in $[\lambda_{\mathrm{c}}(t),2\lambda_{\mathrm{c}}(t)],$ we have
        \begin{align}\label{suplambda}
            \bP_{\lambda, t}(\mathbf{0} \rightarrow \infty) \geq K _{1}(\lambda - \lambda_{\mathrm{c}}).
        \end{align} 
        \item[2.] For every $\lambda > 0$, there exists a constant $K_{2}(t_{\mathrm{c}}(\lambda)) $ such that for any $t$ in $[t_{\mathrm{c}}(\lambda),2t_{\mathrm{c}}(\lambda)],$ we have
        \begin{align}\label{supt}
            \bP_{\lambda, t}(\mathbf{0} \rightarrow \infty) \geq K_{2} (t - t_{\mathrm{c}}).
        \end{align} 
        \item[3.] For fixed $t>0,$ and $\lambda < \lambda_{\mathrm{c}}(t),$ there exist constants $C,c>0,$ such that
        \begin{equation}\label{finitesus}
            \bP_{\lambda,t}\left(|\mathcal{C}_{\mathbf{0}}| \geq n\right) \leq Ce^{-cn}, \text{ for every $n\geq0$}. 
        \end{equation}
        \item[4.] For fixed $\lambda > 0,$ and $t < t_{\mathrm{c}}(\lambda),$ there exist constants $C,c>0,$ such that
        \begin{equation}
            \bP_{\lambda,t}\left(|\mathcal{C}_{\mathbf{0}}| \geq n\right) \leq Ce^{-cn}, \text{ for every $n\geq0$}. 
        \end{equation}
    \end{itemize}
\end{theorem}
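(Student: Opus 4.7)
I would follow the OSSS framework of Duminil-Copin, Raoufi, and Tassion, adapted to the frog model. The model sits on the product probability space with independent coordinates $X_x := (\eta_x, (Y_x^i)_{i \leq \eta_x})$ for $x \in V$, and the survival event $\{\mathbf{0} \to B(n)^c\}$ is increasing in each $X_x$ through the range $\mathcal{R}_x$. Vertex-transitivity provides the uniformity needed to average local quantities into a global differential inequality, and importantly this framework does not require reversibility of the walks.

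\textbf{Algorithm and OSSS.} Let $f_n := \1\{\mathbf{0} \to B(n)^c\}$ and $\theta_n := \bP_{\lambda,t}(\mathbf{0} \to B(n)^c)$. Following DCRT, I sample a uniform $k \in \{0, \ldots, n\}$ and run the following randomized algorithm: query $X_y$ for every $y \in B(k)$, then perform a directed BFS on the revealed cluster, querying $X_z$ whenever $z$ is newly discovered to be forward-reachable. This determines $f_n$. For $x \notin B(k)$ the revealment satisfies $\delta_x \leq \sum_{y \in B(k)} \bP(y \to x)$, and after averaging over $k$ and using vertex-transitivity to re-center, one obtains $\overline{\delta}_x \leq C \Sigma_n/n$, where $\Sigma_n := \sum_{y \in B(n)} \bP(\mathbf{0} \to y)$. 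The OSSS inequality then gives
$$\theta_n(1 - \theta_n) \;\leq\; \frac{C\,\Sigma_n}{n}\sum_{x \in V} \mathrm{Inf}_x(f_n).$$

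\textbf{Russo formulas and differential inequality.} For $\lambda$, the Mecke identity for the Poisson field of particles yields $\partial_\lambda \theta_n = \sum_x \bP(\text{adding an independent fresh particle at } x \text{ flips } f_n)$. For $t$, infinitesimally extending trajectories corresponds to each existing particle taking an additional jump at rate $1$, producing an analogous formula. A coupling comparison (decomposing $X_x$ into ``old'' and ``fresh'' parts via Poisson superposition / thinning) shows that each additive pivotal event dominates the corresponding influence $\mathrm{Inf}_x(f_n)$ up to a positive constant, giving $\partial_\lambda \theta_n \geq c \sum_x \mathrm{Inf}_x(f_n)$ and likewise for $t$. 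Combining with the OSSS bound,
$$\partial_\lambda \theta_n \;\geq\; \frac{c n}{\Sigma_n}\,\theta_n(1-\theta_n), \qquad \partial_t \theta_n \;\geq\; \frac{c n}{\Sigma_n}\,\theta_n(1-\theta_n).$$
The standard DCRT integration argument then forces a dichotomy: either $\Sigma_\infty < \infty$ and $\theta_n \leq Ce^{-cn}$, or $\theta_\infty \geq c(\lambda-\lambda_c)$ (resp.\ $c(t-t_c)$) for parameters slightly above criticality. This establishes (1), (2), and the radius decay underlying (3), (4).

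\textbf{Volume tail and main obstacle.} To upgrade radius decay into the volume tail in (3), (4), I would work in the subcritical regime $\Sigma_\infty < \infty$ and show that $|\mathcal{C}_\mathbf{0}|$ has an exponential moment, either via an Aizenman-Barsky-Newman style argument (extracting a further differential inequality for $\chi(\lambda,t):=\E|\mathcal{C}_\mathbf{0}|$) or by a direct exploration coupling using that $|\mathcal{R}_x|$ has exponential tails uniformly in $x$ (bounded degrees plus Poisson counts plus fixed lifetime). The most delicate step of the whole plan is the Russo comparison: unlike Bernoulli percolation, $X_x$ is not binary, so converting ``resampling influence'' into ``additive-perturbation influence'' requires a careful Poisson thinning decomposition, and keeping the constant $c$ uniform in a neighborhood of criticality is the main technical obstacle, together with the radius-to-volume upgrade for parts (3) and (4).
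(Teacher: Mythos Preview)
Your approach is genuinely different from the paper's. The paper does \emph{not} use the OSSS/decision-tree framework of Duminil-Copin--Raoufi--Tassion; instead it follows the earlier Duminil-Copin--Tassion route, introducing an auxiliary quantity
\[
\phi_{\lambda,t}(S)=\sum_{x\in S}\lambda\,\bP_x(\tau_{S^c}\le t)\,\bP_{\lambda,t}(\mathbf{0}\xrightharpoonup{S} x)
\]
and auxiliary critical points $\tilde\lambda_c,\tilde t_c$ defined via $\inf_S\phi_{\lambda,t}(S)$. Items (1)--(2) come from direct Russo-type differential inequalities of the form $\partial_\lambda\theta_\Lambda\ge\frac{1}{\lambda}\inf_S\phi(S)(1-\theta_\Lambda)$ (and an analogue in $t$), with no decision tree and no revealment. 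Items (3)--(4) are proved by an explicit exploration process dominated by a subcritical branching process; the key technical point is comparing $\phi(S)$ to a weighted version $\tilde\phi(S)$ (Lemma~\ref{constantcomparison}) via transitivity and Harris--FKG. This is why the constant $c(\Delta,\lambda,t)$ appears in the definition of $\tilde\lambda_c$.

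Your OSSS plan has a concrete gap precisely at the step where you write ``after averaging over $k$ and using vertex-transitivity to re-center, one obtains $\overline{\delta}_x \leq C \Sigma_n/n$''. With your algorithm (reveal all of $B(k)$, then forward BFS), every $x\in B(k)$ has $\delta_x^{(k)}=1$, so for $x$ at distance $r$ from $\mathbf{0}$ the average satisfies $\overline{\delta}_x\ge (n{+}1{-}r)/(n{+}1)$, which is not $O(\Sigma_n/n)$ when $r$ is small. The standard DCRT fix---explore from the \emph{sphere} $\partial B(k)$ in both directions---relies on being able to explore the cluster of $\partial B(k)$ backward as well as forward. Here the directed edges $x\Rightarrow y$ are determined by $X_x$, so discovering the backward cluster $\{y:y\to\partial B(k)\}$ forces you to query $X_y$ for essentially arbitrary $y$, and there is no dual process (unlike oriented percolation, where time-reversal gives one). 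This is exactly the obstruction the paper sidesteps by abandoning OSSS in favour of the $\phi(S)$ approach, which never needs to explore backward; the pivotal decomposition over $\{\mathcal L=S\}$ together with the Poisson-thinning independence of $\{\mathbf{0}\xrightharpoonup{S}x\}$ and $\{\mathcal L=S\}$ replaces the revealment bound entirely. If you want to rescue OSSS here you would need either a genuinely new algorithm whose revealment is controlled without backward exploration, or a truncation argument exploiting the super-exponential tail of $|\mathcal R_x|$; neither is sketched, and neither is obviously routine.
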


It is generally known to be very hard to extend sharpness results to dependent percolation models, as illustrated by \cite{sharpnesspotts}. Theorem \ref{mainsharpness} provides another example of a dependent percolation model that exhibits the phenomenon of sharpness.  

Additionally, we conjecture that Theorem \ref{mainsharpness} is still valid for the frog model where the frogs follow (up to time $t$) the rate-1 continuous-time version of  some arbitrary irreducible and transitive (see \cite{levin2017markov} for a precise definition) transition kernel $P$  on $V $, but we were unable to prove the result in that level of generality.  

We conclude this introduction by posing a few additional questions that we believe could lead to interesting directions for study, along with some insights into them.

\subsection{Questions and related works}

As the reader might observe, the percolation model induced by the frog model shares many similarities with Bernoulli bond percolation.  In fact, even though our model is long-range, it should resemble nearest-neighbor (or finite-range) Bernoulli percolation more closely. That is, since the rate of decay of connections is super-polynomial, one might expect that the results about continuity of the phase transition, and the behavior of the critical exponents in higher dimensions, should be similar to the case of nearest-neighbor Bernoulli percolation. Of course, proving continuity of the phase transition for the frog model would generalize the already challenging result for Bernoulli percolation. It is still natural to conjecture that:

\begin{conjecture}
The phase transition is continuous for all transitive graphs.
\end{conjecture}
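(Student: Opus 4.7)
The plan is to prove that $\bP_{\lambda_{\mathrm{c}}(t), t}(\mathbf{0} \rightarrow \infty) = 0$ for every $t > 0$, and symmetrically $\bP_{\lambda, t_{\mathrm{c}}(\lambda)}(\mathbf{0} \rightarrow \infty) = 0$ for every $\lambda > 0$; this is the standard meaning of continuity of the phase transition and, by monotonicity, amounts to right-continuity of $\lambda \mapsto \bP_{\lambda,t}(\mathbf{0} \rightarrow \infty)$ at $\lambda_{\mathrm{c}}(t)$. Item (3) of \cref{mainsharpness} already provides exponential decay of $|\mathcal{C}_{\mathbf{0}}|$ strictly below criticality, so all the work lies at the critical parameter itself. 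A useful structural observation is that the ranges $\{\mathcal{R}_x\}_{x\in V}$ are mutually independent, and on a transitive graph their joint law is invariant under the automorphism group, so both mass-transport and monotone couplings across $\lambda$ (or $t$) are available.

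I would first attack the non-amenable case via an adapted Burton-Keane and Benjamini-Lyons-Peres-Schramm framework. The induced directed percolation has independent outgoing edge sets from distinct vertices (only the edges inside a single $\mathcal{R}_x$ are internally correlated), so uniqueness of the infinite cluster when it exists, and the absence of infinite clusters precisely at $\lambda_{\mathrm{c}}$, should follow by a trifurcation or insertion-tolerance argument together with non-amenability. For general transitive graphs, and in particular $\Z^d$, the aim would be a differential inequality of the form $\partial_\lambda \bP_{\lambda,t}(\mathbf{0} \rightarrow B(n)^c) \geq c(\lambda,t)\, \bP_{\lambda,t}(\mathbf{0} \rightarrow B(n)^c)$ in the spirit of the OSSS approach of Duminil-Copin, Raoufi, and Tassion, produced by an algorithm that reveals particle trajectories one at a time; combined with the sharpness bounds in \cref{mainsharpness} this should rule out $\bP_{\lambda_{\mathrm{c}},t}(\mathbf{0} \rightarrow \infty) > 0$. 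A complementary high-dimensional attack on $\Z^d$ would be a lace expansion along trajectories, verifying a triangle condition for the two-point function $\bP_{\lambda,t}(\mathbf{0} \rightarrow x)$, which would simultaneously yield continuity and mean-field critical exponents.

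The main obstacle is that continuity at $p_{\mathrm{c}}$ is itself open for Bernoulli percolation on $\Z^d$ in intermediate dimensions $3 \leq d \leq 10$, so a fully general resolution of the conjecture is beyond current technology. On top of this, a single range $\mathcal{R}_x$ is a complicated correlated subset (the union of several continuous-time walks of length $t$), which blocks a direct use of BK and Reimer-type inequalities and makes defining a clean ``pivotal edge'' for the OSSS-type differential inequality delicate; one must likely work instead with pivotal particles, or with pivotal edge-traversal events within a single trajectory, and absorb combinatorial factors growing with $\lambda t$. A realistic staged program is therefore to handle first the non-amenable transitive case and high-dimensional $\Z^d$ via lace expansion, leaving the intermediate-dimension transitive case as a genuine analogue of the classical open problem for Bernoulli percolation.
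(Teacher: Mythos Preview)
The paper does not prove this statement: it is explicitly a \emph{conjecture}, left open, and the surrounding discussion only suggests that adapting the slab techniques of \cite{Slabs} to $\mathbb{Z}^2$ might be a promising first step. So there is no proof for you to compare against, and your submission is (as you yourself say) a research program rather than a proof. You are right that the problem is at least as hard as continuity for Bernoulli percolation in intermediate dimensions, and your closing paragraph correctly identifies this as the fundamental obstruction.

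That said, a few of your concrete suggestions would not work as stated. The trifurcation/Burton--Keane argument is an \emph{amenable} tool (it uses that boundary-to-volume ratios vanish), so invoking it ``together with non-amenability'' is backwards; in the non-amenable unimodular case the relevant input is the BLPS mass-transport machinery, but that was developed for undirected, insertion-tolerant percolation, and here the model is genuinely \emph{directed}: $x \Rightarrow y$ does not imply $y \Rightarrow x$, the forward and backward clusters $\{y : \mathbf{0} \to y\}$ and $\{y : y \to \mathbf{0}\}$ have different laws, and the standard notion of ``infinite cluster'' and its uniqueness need to be reformulated before any BLPS-style argument can even be posed. Your observation that the $\{\mathcal{R}_x\}_{x\in V}$ are independent is correct and useful, but independence of out-neighborhoods in a directed model does not give you the FKG/BK structure the classical arguments rely on. Likewise, an OSSS differential inequality of the shape $\partial_\lambda \bP_{\lambda,t}(\mathbf{0}\to B(n)^c) \geq c\,\bP_{\lambda,t}(\mathbf{0}\to B(n)^c)$ would, if it held uniformly near $\lambda_c$, already contradict the finiteness of $\lambda_c$; the OSSS method instead produces an inequality involving a sum of revealment-weighted influences, and turning that into continuity at criticality requires an additional a priori bound (e.g.\ a one-arm estimate) that is not available here.
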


An already interesting result would be to consider the case of $\mathbb{Z}^{2}.$ The recent techniques developed in \cite{Slabs} for critical Bernoulli percolation on slabs work for finite range percolation models. As previously mentioned, our case is more subtle since we do allow arbitrary connections, but they have super-polynomial decay. One must also recall that the probabilities of connections are not independent, as in the Bernoulli case, and this introduces great difficulties to many of the arguments. However, we still believe that adapting their proof is a promising direction. 

Many other questions can be asked about the behavior of the critical curves  $\lambda_{c}(t)$ and $t_{c}(\lambda)$. A natural starting point concerns the strict monotonicity of these quantities:

\begin{conjecture}
For all transitive graphs of superlinear growth      $t \mapsto \lambda_{\mathrm{c}}(t)$ and $\lambda \mapsto t_{\mathrm{c}}(\lambda)$ are continuous and strictly decreasing.
\end{conjecture}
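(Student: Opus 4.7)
The plan is to split the conjecture into continuity and strict monotonicity of the critical curves and attack them separately, leveraging the sharpness already proved in Theorem \ref{mainsharpness} for continuity and an essential-enhancement argument for strict monotonicity. Since both $t \mapsto \lambda_{\mathrm{c}}(t)$ and $\lambda \mapsto t_{\mathrm{c}}(\lambda)$ are manifestly monotone non-increasing via the obvious couplings (Poisson thinning and trajectory truncation), the only possible discontinuities are jumps, so the continuity statement reduces to ruling these out on each side.

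For right-continuity of $\lambda_{\mathrm{c}}$, write $\theta(\lambda, t) := \mathbb{P}_{\lambda, t}(\mathbf{0} \rightarrow \infty)$ and fix $t_0 > 0$. For any $\lambda > \lambda_{\mathrm{c}}(t_0)$, the bound \eqref{suplambda} gives $\theta(\lambda, t_0) \ge K_1 (\lambda - \lambda_{\mathrm{c}}(t_0))$, and monotonicity in $t$ propagates this estimate to all $t \ge t_0$, forcing $\lambda_{\mathrm{c}}(t) \le \lambda$ for such $t$; letting $\lambda \downarrow \lambda_{\mathrm{c}}(t_0)$ then yields right-continuity. For left-continuity, I would build a monotone coupling of the processes at $(\lambda, t_0)$ and $(\lambda, t_0 - \eta)$ in which particles and trajectories are shared and each walk in the second process is the length-$(t_0 - \eta)$ truncation of the corresponding walk in the first. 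Survival at $(\lambda, t_0)$ but not at $(\lambda, t_0 - \eta)$ requires the cluster of $\mathbf{0}$ to exploit the terminal $\eta$-sliver of at least one activated trajectory, and the subcritical exponential decay \eqref{finitesus} should control the probability of such marginal dependencies propagating to infinity as $\eta \downarrow 0$. Analogous arguments apply to $t_{\mathrm{c}}(\lambda)$.

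For strict monotonicity, the natural approach is essential enhancement. At a critical pair $(\lambda_{\mathrm{c}}(t), t)$, I would split each particle's trajectory into a length-$t$ main piece and an independent length-$\eta$ tail attached at its endpoint, and attempt to show that the extra directed edges contributed by the tails can be traded for a strictly smaller Poisson intensity $\lambda - \delta(\eta)$ while preserving survival. A Russo-type identity for $\partial_t \theta$ written in terms of pivotal-trajectory events, combined with the linear lower bound from Theorem \ref{mainsharpness}, should produce a differential inequality forcing $\lambda_{\mathrm{c}}$ to decrease strictly as $t$ grows; strict monotonicity of $t_{\mathrm{c}}(\lambda)$ then follows because it is, up to the degenerate set $\{\theta = 0\}$, the inverse of $\lambda_{\mathrm{c}}(t)$.

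The principal obstacle is the strong dependence structure inherent to the frog model: unlike Bernoulli percolation, the edges out of a single vertex are correlated, and the edges out of different vertices are coupled through overlapping particle ranges. This rules out the standard finite-energy and pivotal-edge framework, and forces one to work with pivotal-trajectory events whose geometry under transitive group actions is itself nontrivial. Adapting the multiscale renormalization of \cref{sec: existence} to track not only survival but also the existence of ``pivotal tails'' on a renormalized structure, together with the sharpness output of Theorem \ref{mainsharpness}, seems to me the most promising route; but making each step rigorous across all transitive graphs of superlinear growth is a substantial technical undertaking.
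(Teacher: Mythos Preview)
This statement is a \emph{conjecture} in the paper, not a theorem: there is no proof to compare against. The paper's only remarks are that (i) the conjecture would follow from continuity of the phase transition (itself conjectural) combined with Theorem~\ref{mainsharpness}, and (ii) a direct attack via essential enhancements in the style of Aizenman--Grimmett is a plausible route on $\mathbb{Z}^d$, but the authors do not know how to carry it out because the model is long-range. Your enhancement outline is thus broadly aligned with what the paper suggests, and you correctly identify the dependence structure as the main obstruction.

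Your continuity arguments, however, have concrete gaps. The ``right-continuity'' step is vacuous: from $\theta(\lambda,t_0)>0$ for $\lambda>\lambda_c(t_0)$ and monotonicity in $t$ you only recover $\lambda_c(t)\le\lambda_c(t_0)$ for $t\ge t_0$, which is exactly the monotonicity you already assumed; the sharpness bound \eqref{suplambda} plays no role, and nothing rules out a downward jump of $\lambda_c$ at $t_0$. The ``left-continuity'' step invokes the subcritical exponential decay \eqref{finitesus} at parameters $(\lambda,t_0-\eta)$ approaching the critical curve, but the constants $C,c$ in \eqref{finitesus} depend on the parameters and are not shown to be uniform as one approaches criticality --- indeed one expects them to degenerate --- so there is no way to pass to the limit $\eta\downarrow 0$. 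More fundamentally, what you would actually need is left-continuity of $t\mapsto\theta(\lambda,t)$, i.e.\ that $\bigcup_{s<t_0}\{\mathbf{0}\xrightarrow{s}\infty\}$ equals $\{\mathbf{0}\xrightarrow{t_0}\infty\}$ up to a null set under the natural coupling; this is not obvious, since an infinite activation chain at lifespan $t_0$ could in principle require infinitely many particles each to use arbitrarily close to their full lifespan. None of this is addressed, and the paper does not claim to resolve it either.
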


We point out that, combined with Theorem \ref{mainsharpness}, the above is a consequence  of the continuity of the phase transition. A possible approach to directly prove this on $\mathbb{Z}^{d}$ is to develop the technique of enhancements originally introduced in \cite{enhancements}. Again, this seems possible when we only allow finite-range connections, but we do not know how to extend the results to arbitrary-length connections, which exemplifies the importance of developing a formal mechanism to compare them. Being able to make such a comparison would also help in understanding other properties of Bernoulli percolation, such as locality, which was recently established in \cite{easohutchcroft}. In this sense, we also expect that on $\mathbb{Z}^{d}$, the critical dimension agrees with the case of Bernoulli percolation (see for instance \cite{hutchcroftcriticaldim}), and we believe that the techniques of lace-expansion can be developed to show that the critical exponents agree with the ones of Bernoulli percolation when $d\ge 6.$


Another very interesting direction of study is to extend the results of existence and sharpness to the more general class of models $B+A \rightarrow2B$ (see, for instance, \cite{kesten_vladas,kesten_vladas_shape,kesten_vladas_spread}), which includes the frog model. 
We consider the following version of this model, which we call the $B+A\rightarrow2B$ model ``with death," and more closely resembles our work: initially, there is a single particle of type $B$ at the origin, interpreted as infected. 
Additionally, each site has $\text{Po}(\lambda)$ particles of type $A$. 
Particles of type $A$ move according to independent continuous-time simple random walks with rate $D_{A}$ with infinite lifespan, and the particles of type $B$ do the same but with rate $D_{B}$ up to a finite time $t,$ after which they die.
When a particle of type $B$ collides with a particle of type $A$, the latter instantaneously turns into a particle of type $B$, which is eventually removed from the system after its death. 
It is clear that the frog model is a specific case of the above (with a planted particle at the origin) when $D_{A}=0.$ However, extending our results to this level of generality appears to be extremely challenging, as the $B+A\rightarrow 2B$ model is not \textbf{abelian}, a key property of the frog model that we discuss at the beginning of Section \ref{preliminaries}. We thus state the following question, which we believe to be of fundamental importance (see \cite{dauvergne2022sirmodelmovingpopulation} for a recent proof of the existence of the phase transition on $\mathbb{Z}^{d}$ for a similar version of this model when $D_{A}=D_B$): 

\begin{question}
    Can Theorems \ref{main:existence}, \ref{main_quasi:existence}, and \ref{mainsharpness} be extended to the more general $B+A\rightarrow2B$  model with death?
\end{question}

We conclude this discussion by contrasting the behaviors of amenable and non-amenable graphs when the particles have infinite lifespans. We first conjecture that for any amenable transitive graph and $\lambda>0$ conditioned on $\eta_{\mathbf{0}}>0$ one has that $\mathcal{C}_{\mathbf{0} }= V,$ $\bP_{\lambda, \infty}-$almost surely.

However, a very different picture will happen for non-amenable unimodular vertex-transitive graphs when the lifespan is infinite. We conjecture that the model always undergoes two phase transitions: Below $\lambda_{\mathrm{c}}(\text{recurrence}) $ it is transient, that is, the origin is almost surely visited only finitely many times. Between $\lambda_{\mathrm{c}}(\text{recurrence})$ and $\lambda_{\mathrm{c}}(\text{connectedness})$ it is null-recurrent on $\eta_\mathbf{0} > 0$ and $\inf_{v\in V} \bP_{\lambda,\infty}(0 \rightarrow v) = 0 .$ Above $\lambda_{\mathrm{c}}(\text{connectedness})$ on $\eta_\mathbf{0} > 0$ it is positively recurrent and almost surely $\mathcal{C}_\mathbf{0} = V$. 

The last two predictions suggest that the frog model with infinite lifespan displays a dichotomy between the amenable and non-amenable setups. This is analogous to the following dichotomy, conjectured by Benjamini and Schramm \cite{MR1423907}, to hold for Bernoulli percolation on transitive graphs: There exists a non-uniqueness phase if and only if the graph is non-amenable. A similar dichotomy was established for the somewhat related social network model in \cite{hermon_morris}.

It is standard (see \cite{nina_critical_branching}) that the branching random walk with $\text{Po}(\lambda)$ offspring distribution on unimodular vertex-transitive non-amenable graphs is transient when  $\lambda \rho \leq1,$ where $\rho$ denotes the spectral radius of the walk (see Equation \eqref{def: spectral_radius}). By a straightforward argument, it follows that the same is true for the frog model with infinite lifespan and particle density $\lambda$.

\section{Preliminaries}\label{preliminaries}

In this section, we establish some basic properties of the frog model and prove some of the simpler components of our main theorems. 
In particular, we establish here the existence of the phase transition in $\lambda$ for a large class of graphs.

\paragraph{The Abelian property.}
We begin by describing a fundamental property of the frog model, commonly referred to as the \emph{abelian property}. 
Let $\{\eta_x\}_{x \in V}$ denote the number of particles initialized at each vertex.  
For each particle, we independently sample an infinite discrete-time simple random walk trajectory starting from its initial position and an independent Poisson random variable with mean $t$. 
For $x \in V$ and $i \in \{ 1, \ldots, \eta_x \}$, we denote the discrete-time random walk trajectory of $\omega_x^i$ by $X_x^i$ and we denote by $N_x^i(t)$, a Poisson random variable with mean $t,$ the number of jumps performed by the particle $\omega_{x}^{i}$ within its lifespan $t$. Then, the sequence of vertices traversed by $\omega_x^i$ (allowing repetition if the graph contains a loop)  is given by
\[
\bigl( X_x^i(k) \bigr)_{k=0}^{N_x^i(t)}.
\]

Fix a vertex set $D \subseteq V$ and declare only the particles initialized in $D$ to be active at time $0$. For each particle $\omega_x^i$ with $x \in D$ and $i \in \{ 1, \ldots, \eta_x \}$, we declare $X_x^i(0)$ as \emph{used}. A \emph{legal operation} consists of selecting an active particle $\omega_x^i$ which has not yet reached its final position, considering its next unused position $X_x^i(k{+}1)$ (if no next unused position exists, then no legal operation involving $\omega_x^i$ is possible), and then performing the following steps:
\begin{itemize}[nosep]
    \item move $\omega_x^i$ from its current position $X_x^i(k)$ to $X_x^i(k{+}1)$; The reached $X_x^i(k+1)$ is now considered used.
    \item if $X_x^i(k{+}1)$ contains inactive particles, activate all such particles.
\end{itemize}
For a sequence of legal operations $\mathbf{o} = (o_1, o_2, \ldots)$, let $A_j(\mathbf{o})$ denote the set of active particles after the first $j$ legal operations have been performed, with the initial condition
\[
A_0(\mathbf{o}) := \{\omega_x^i : x \in D,\ 1 \le i \le \eta_x\}.
\]
Since each legal operation can only activate additional particles (never deactivate them), we have $A_j(\mathbf{o}) \subseteq A_{j{+}1}(\mathbf{o})$ for any $j \ge 0$. We define the set of all eventually activated particles as
\[
A_\infty(\mathbf{o}) := \bigcup_{j \ge 0} A_j(\mathbf{o}).
\]

For a particle $\omega$, let $c_\omega(j; \mathbf{o})$ denote the number of times $\omega$ has been selected during the first $j$ legal operations of $\mathbf{o}$.  
We call $\mathbf{o}$ \emph{exhaustive} if every particle $\omega_x^i$ that ever becomes active satisfies
\[
\lim_{j \to \infty} c_{\omega_x^i}(j; \mathbf{o}) = N_x^i,
\]
i.e., every active particle will eventually be selected for exactly $N_x^i$ legal operations.

The Abelian property of the frog model states that for any initially active set $D \subseteq V$, and any two exhaustive sequences of legal operations $\mathbf{o}$ and $\mathbf{o}'$, we have
\[
A_\infty(\mathbf{o}) = A_\infty(\mathbf{o}').
\]
In particular, the set of vertices that eventually become activated is independent of the order in which legal operations are performed. 
We refer the reader to, e.g. \cite{VladasRolla} for a detailed account of the Abelian property for activated random walks.

This property is essential for the arguments in this paper: various exploration procedures will be constructed to reveal the set of eventually activated vertices, and the order in which legal operations are performed can be chosen purely for convenience, without affecting the outcome. 
In particular, instead of running the frogs simultaneously in continuous time, we can pick one active frog at a time, chosen in any way we like, and reveal its entire trajectory.

\paragraph{The subcritical phase.}
As in \cite{FrogArt}, a comparison with a subcritical Galton--Watson branching process shows that for any $\lambda>0$ and $t>0$ with $\lambda t \le 1$, the process dies out almost surely. 
We now state and prove the following proposition, which in particular implies item~(1) of Theorem~\ref{main:existence}. 
Let $\mathcal{T}$ be the total number of particles that are eventually activated, i.e. $\mathcal{T} := \sum_{x} |\{ \omega_x^i : 1 \le i \le \eta_x, \mathbf{0} \rightarrow x\}|.$

\begin{proposition}\label{notzero}
    For any graph $G$ and any positive parameters $\lambda$ and $t$ satisfying $\lambda t \le 1$, we have $\bP_{\lambda,t}(\mathbf{0}\rightarrow\infty)=0$. Moreover, when $\lambda t < 1$, there exists $s > 1$ such that $\bE_{\lambda, t}[s^{\mathcal{T}}] < \infty.$
\end{proposition}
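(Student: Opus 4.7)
The plan is a direct stochastic domination by a Galton--Watson branching process of mean $\lambda t$. Invoking the Abelian property, we explore the frog model one active frog at a time: we pick an unexplored active frog $\omega$, reveal its entire random-walk trajectory (i.e.\ its number of jumps $N\sim\mathrm{Po}(t)$ and the vertices it visits), and, for each visited vertex not visited earlier during the exploration, reveal the $\mathrm{Po}(\lambda)$ frogs located there and add them to the queue of active frogs. These newly activated frogs are the ``children'' of $\omega$.

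To make the offspring independent across frogs, I would use the following coupling: rather than using the actual vertex occupancies $\eta_{y_{k}}$ at the successive vertices $y_{k}$ visited by $\omega$, we replace them by fresh independent $\mathrm{Po}(\lambda)$ variables $Z_{k}$. This yields a pointwise upper bound, because revisits by $\omega$ and vertices previously explored by other frogs contribute nothing to the real count but still contribute $Z_{k}$ to the coupled count. The resulting offspring variable is $X=\sum_{k=1}^{N}Z_{k}$, a compound Poisson with PGF $f(s)=\exp\!\bigl(t(e^{\lambda(s-1)}-1)\bigr)$ and mean $\bE[X]=\lambda t$. Since each frog's jump clock and its fresh $Z_{k}$'s are independent of everything else, we obtain $\mathcal{T}\le\mathcal{T}'$ stochastically, where $\mathcal{T}'$ is the total progeny of a Galton--Watson process started from $\eta_{\mathbf{0}}\sim\mathrm{Po}(\lambda)$ individuals with offspring law $X$.

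When $\lambda t\le 1$ the branching process is critical or subcritical, so $\mathcal{T}'<\infty$ a.s.; since $\mathcal{T}<\infty$ implies that only finitely many vertices are ever activated, this gives $\bP_{\lambda,t}(\mathbf{0}\to\infty)=0$. For the second statement, assume $\lambda t<1$. The PGF $f$ is entire with $f'(1)=\lambda t<1$; applying the implicit function theorem to the fixed-point equation $g(s)=s\,f(g(s))$ (satisfied by the PGF $g$ of the total progeny of a single tree) at $(s,g)=(1,1)$, where $\partial_{g}(g-sf(g))=1-f'(1)\neq 0$, shows that $g$ extends analytically past $s=1$, so $g(s)<\infty$ for some $s>1$. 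The PGF of $\mathcal{T}'$ is $\exp\!\bigl(\lambda(g(s)-1)\bigr)$, which is finite at that $s$, and stochastic domination yields $\bE_{\lambda,t}[s^{\mathcal{T}}]\le\bE[s^{\mathcal{T}'}]<\infty$.

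The argument is essentially routine and contains no serious obstacle; the only point requiring care is the coupling to independent offspring, handled by substituting fresh $\mathrm{Po}(\lambda)$ variables at each step of every frog's trajectory in place of the true occupancies $\eta_{y}$.
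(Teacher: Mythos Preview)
Your proof is correct and follows essentially the same strategy as the paper: stochastic domination by a Galton--Watson process of mean $\lambda t$. The paper explores vertex by vertex, taking the offspring of a vertex $x$ to be $\zeta_x=\sum_{i=1}^{\eta_x}N_x^i(t)$ (a $\mathrm{Po}(\lambda)$-compound of $\mathrm{Po}(t)$'s, PGF $\exp(\lambda(e^{t(s-1)}-1))$), whereas you explore frog by frog, with offspring $X=\sum_{k=1}^{N}Z_k$ (a $\mathrm{Po}(t)$-compound of $\mathrm{Po}(\lambda)$'s, PGF $\exp(t(e^{\lambda(s-1)}-1))$); the roles of $\lambda$ and $t$ are simply swapped, and both have mean $\lambda t$. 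Your choice is slightly better adapted to bounding $\mathcal{T}$, which counts particles rather than vertices. For the exponential moment, the paper cites Exercise~5.33 of Lyons--Peres, while your implicit-function-theorem argument on $g(s)=s f(g(s))$ is the standard direct route to the same conclusion; both are fine.
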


\begin{proof}  
    Let $\zeta_x := \sum_{i=1}^{\eta_x} N_x^i(t)$ be the total number of jumps made by all particles initially located at $x$.  
    The distribution of $\zeta_x$ is the same for every $x\in V$, and we denote this common law on $\N$ by $\mu$.
    We now describe an exploration process.  
    In the first stage, set $A_0 := \{\mathbf{0}\}$ and mark $\mathbf{0}$ as \emph{revealed}.  
    In stage $k\ge 1$, define $A_k$ to be the set of vertices that are \emph{unrevealed} (i.e., not in $\cup_{i=0}^{k-1} A_i$) and that lie in the range of at least one particle initially located at a vertex in $A_{k-1}$.  
    Then mark all vertices in $A_k$ as revealed.
    It is straightforward to check that the sequence $\{ |A_k| \}_{k\ge 0}$ is stochastically dominated by a branching process with offspring distribution $\mu$. Since $\mu$ has mean $\lambda t \le 1$, this branching process is either critical or subcritical, and it dies out in either case. Moreover, when $\lambda t < 1$, since for any $s_0 > 0$, we have $\bE_{\lambda, t}[s_0^{\zeta_x}] = \exp(\lambda (\exp(t(s_0 - 1)) - 1))$, Exercise 5.33 of \cite{MR3616205} gives that $\bE_{\lambda, t}[s^\mathcal{T}]$ is finite for some $s > 1$. 
\end{proof}

\paragraph{Finiteness of $\lambda_c(t)$.} Additionally, when the graph $G$ is quasi-transitive with superlinear growth, for any $t > 0$, if we choose $\lambda$ sufficiently large, we can establish via comparison with supercritical percolation that there is a positive probability of survival of the infection when only the origin is initially activated at time $0$. In other words, for any finite $t > 0$, the critical parameter $\lambda_c(t)$ is finite.

Let $\bP_{p}$ denote the measure with respect to the standard Bernoulli edge percolation on a graph $G$. We define
\begin{equation}
    p_{\mathrm{c}}(G) := \inf \left\{ p \in [0,1]: \exists \, x \text{ such that } \bP_{p}( x\leftrightarrow \infty)>0\right\},
\end{equation}
where $x \leftrightarrow \infty$ denotes the event that there exists an infinite self-avoiding path of open edges starting at $x$.
We now restate item $(2)$ of Theorem \ref{main:existence}:

\begin{proposition} \label{superlinear}
    Let $G$ be a graph of bounded degree $\Delta:=\sup_{v \in V} \deg(v) < \infty$
    and $p_{\mathrm{c}}(G)<1$. Then, for any $t>0$, we have $\lambda_{\mathrm{c}}(t) < \infty$.
\end{proposition}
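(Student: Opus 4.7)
The approach is to dominate the frog model by a supercritical Bernoulli site percolation on $G$. The key structural fact (already exploited in the ``abelian property'' discussion) is that the range $\mathcal{R}_x$ depends only on $\eta_x$ and the trajectories of the particles initially at $x$; hence $\{\mathcal{R}_x\}_{x \in V}$ are mutually independent.

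For each $x \in V$, let $B_x$ be the event that every neighbor of $x$ in $G$ belongs to $\mathcal{R}_x$. By the independence above, $\{B_x\}_{x \in V}$ are mutually independent. A single particle starting at $x$ visits a prescribed neighbor $y$ within time $t$ with probability at least $\pi_0 := (1-e^{-t})/\Delta > 0$: it suffices that the first exponential clock ring before time $t$ (probability $1-e^{-t}$) and that the first step be toward $y$ (probability $\ge 1/\Delta$). Conditioning on $\eta_x$ and computing the Poisson generating function,
\[
\bP_{\lambda, t}(y \notin \mathcal{R}_x) \le \bE\bigl[(1-\pi_0)^{\eta_x}\bigr] = e^{-\lambda \pi_0},
\]
so a union bound over the at most $\Delta$ neighbors of $x$ gives $\bP_{\lambda, t}(B_x^c) \le \Delta e^{-\lambda \pi_0}$, which tends to $0$ as $\lambda \to \infty$.

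Now regard $\{B_x\}_{x\in V}$ as an independent Bernoulli site percolation on $G$ with marginal $p(\lambda) := 1 - \Delta e^{-\lambda \pi_0}$. Because $G$ has bounded degree, a standard site--bond comparison inequality (e.g.\ Grimmett--Stacey) yields that $p_{\mathrm{c}}(G) < 1$ implies $p_{\mathrm{c}}^{\mathrm{site}}(G) < 1$. Choose $\lambda$ large enough that $p(\lambda) > p_{\mathrm{c}}^{\mathrm{site}}(G)$; then with positive probability $\mathbf{0}$ lies in an infinite cluster of open sites for this percolation. On that event the frog model survives: $B_\mathbf{0}$ ensures every neighbor of $\mathbf{0}$ is activated, and inductively each open site reached by the cascade activates all of its neighbors in $G$, so infinitely many vertices are eventually activated. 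Thus $\bP_{\lambda, t}(\mathbf{0} \to \infty) > 0$ and $\lambda_{\mathrm{c}}(t) \le \lambda < \infty$.

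The only non-elementary ingredient is the implication $p_{\mathrm{c}}(G) < 1 \Rightarrow p_{\mathrm{c}}^{\mathrm{site}}(G) < 1$, a standard fact for bounded-degree graphs; everything else is a direct per-vertex independence calculation combined with a union bound.
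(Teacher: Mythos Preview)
Your proof is correct. The paper takes a closely related but slightly slicker route: instead of site percolation, it constructs independent \emph{bond} percolation directly by declaring an edge $\{x,y\}$ open when some particle at $x$ has its first jump to $y$ \emph{and} some particle at $y$ has its first jump to $x$. Poisson thinning by first-jump destination makes these edge events genuinely independent across $E$, with parameter at least $\bigl(1-\exp(-\lambda(1-e^{-t})/\Delta)\bigr)^2$, so one can compare directly with $p_c(G)$ and skip the site--bond comparison. Your version trades that thinning trick for the Grimmett--Stacey (or Liggett--Schonmann--Stacey) type inequality $p_c(G)<1 \Rightarrow p_c^{\mathrm{site}}(G)<1$; both arguments are short and valid, but the paper's avoids the external lemma. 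One cosmetic point: your events $B_x$ have marginals that are only \emph{at least} $p(\lambda)$ (since vertices may have different degrees), so strictly speaking you are stochastically dominating i.i.d.\ Bernoulli($p(\lambda)$) site percolation rather than equalling it---this does not affect the conclusion.
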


One natural question is why we choose $\text{Po}(\lambda)$ as the initial particle distribution, rather than some other distribution. The primary reason is the convenience provided by the Poisson thinning property. Proposition \ref{superlinear} exemplifies how the proof leverages this property.

\begin{proof}[Proof of Proposition \ref{superlinear}]
We declare an (undirected) edge $\{x,y\}$ to be \emph{open} if the following two conditions hold:  
(1) there exists a particle initially located at $x$ that makes at least one jump during its lifespan $t$, and whose first jump lands at $y$; and  
(2) condition~(1) also holds with the roles of $x$ and $y$ exchanged.  
By Poisson thinning, the family $\{\mathbf{1}_{\{\{x,y\}\text{ is open}\}}\}_{\{x,y\}\in E}$ consists of independent Bernoulli random variables.  
Moreover, it stochastically dominates an i.i.d.\ collection of Bernoulli random variables with parameter
\[
\left(1 - \exp\!\left(-\frac{\lambda (1 - e^{-t})}{\Delta}\right)\right)^{2}.
\]
By choosing $\lambda$ sufficiently large (depending on $t$ and $\Delta$) so that the quantity above exceeds $p_c(G)$, we obtain that  
$\{\mathbf{1}_{\{\{x,y\}\text{ is open}\}}\}_{\{x,y\}\in E}$ stochastically dominates a supercritical Bernoulli percolation.  
This implies that $\lambda_c(t) < \infty$.
\end{proof}

Combining Proposition \ref{notzero} and Proposition \ref{superlinear}, we have a fairly complete picture for the existence of a phase transition with respect to the particle density $\lambda$. However, proving that $t_{\mathrm{c}}(\lambda)<\infty$ for arbitrarily small $\lambda$ is considerably more challenging and may require different proof strategies for various classes of graphs, such as those of superlinear polynomial growth or non-amenable graphs. While we conjecture that a phase transition exists for any quasi-transitive graph with superlinear growth, this is not the case for graphs with linear growth, as shown in the following proposition, which establishes item $(1)$ of Theorem \ref{main_quasi:existence}.

\paragraph{The linear growth case.} Before proving the proposition, we define a standard notation for hitting times. For any finite set $S\subset V$ and any point $x\in V$, let $(Y_{x}(s))_{s\geq 0}$ be a continuous-time simple random walk starting at $x$. The hitting time $\tau^{x}_{S}:=\inf\{s\geq0: Y_{x}(s) \in S\}$ is the first time the random walk hits the set $S$. We abbreviate this as $\tau^{x}_{y}$ when $S=\{y\}$, and often drop the superscript $x$ when the starting point is clear from the probability measure $\bP_{x}$. 

\begin{proposition}\label{lineargrowth}
    Let $G$ be a quasi-transitive graph with linear growth. Then, for any $\lambda,t>0$ we have $t_{\mathrm{c}}(\lambda) = \infty,$ and $\lambda_{\mathrm{c}}(t) = \infty$ respectively.
\end{proposition}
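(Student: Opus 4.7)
The plan is to reduce to a one-dimensional barrier argument via the quasi-isometry of $G$ to $\Z$. Since $G$ is quasi-transitive with linear growth, it is virtually $\Z$ (by the Imrich--Seifter/Trofimov classification of graphs of linear growth), so I can fix a ``height'' function $h\colon V\to\Z$ that is $L$-Lipschitz on edges and whose fibers $h^{-1}(k)$ all have cardinality at most a uniform constant $M$. It will suffice to show $\bP_{\lambda,t}(\0\to\infty)=0$ for every $\lambda,t>0$, which gives both $\lambda_{\mathrm{c}}(t)=\infty$ and $t_{\mathrm{c}}(\lambda)=\infty$.

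For each $v\in V$, introduce $J_v:=L\,\zeta_v$, where $\zeta_v=\sum_{i=1}^{\eta_v}N_v^i(t)$ is the total number of jumps performed by particles initially at $v$. By the Lipschitz property, every particle trajectory from $v$ changes $h$ by at most $J_v$, so $\mathcal{R}_v\subseteq h^{-1}\bigl([h(v)-J_v,\,h(v)+J_v]\bigr)$. The MGF identity inside the proof of \cref{notzero} yields $\bE_{\lambda,t}[s^{\zeta_v}]<\infty$ for some $s>1$, so $\bP_{\lambda,t}(J_v\geq k)\leq Ce^{-ck}$; and the $J_v$ are independent across $v$. Setting $\tilde J_k:=\max_{v\in h^{-1}(k)}J_v$ defines an i.i.d.-in-$k$ sequence on $\Z$ that still has exponential tails.

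The key reduction is: along any chain of activations $\0=v_0,v_1,\ldots,v_m$ one has $|h(v_{i+1})-h(v_i)|\leq\tilde J_{h(v_i)}$. Hence $h(\mathcal{C}_{\0})\subseteq S$, where $S\subseteq\Z$ is the one-dimensional random cluster of $h(\0)$ generated by the rule ``$k$ activates every $j$ with $|j-k|\leq\tilde J_k$''. Since each fiber is finite, it suffices to show $|S|<\infty$ almost surely.

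For the 1D step, I would use an independent Borel--Cantelli / barrier argument on the subsequence $k_n:=n^2$. On the right, define $G_n:=\{\tilde J_j\leq k_n-j-1\text{ for all }j\in[k_{n-1},k_n)\}$. These are independent across $n$, and each has probability bounded below by $\prod_{i\geq 1}(1-\bP(\tilde J\geq i))>0$ (the product converges by the exponential tail), so by the second Borel--Cantelli lemma infinitely many $G_n$ occur almost surely. Complementarily, $H_n:=\{\exists\,j<k_{n-1}:\tilde J_j\geq k_n-j\}$ satisfies $\bP(H_n)\leq n^2\cdot Ce^{-2cn}$, which is summable, so by the first Borel--Cantelli lemma only finitely many $H_n$ occur. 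On $G_n\cap H_n^c$, no $j<k_n$ can reach $\geq k_n$ in one step, producing a right barrier at $k_n$; the symmetric argument yields a left barrier. Thus $S$ is almost surely finite, which closes the proof. The main obstacle, in my view, is the initial structural step of constructing the bounded-fiber $L$-Lipschitz height function $h$ on an arbitrary quasi-transitive linear-growth graph; once $h$ is in place, the probabilistic part reduces cleanly to the 1D Borel--Cantelli computation above.
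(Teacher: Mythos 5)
Your argument is correct but takes a genuinely different route from the paper's. You invoke the structural classification of quasi-transitive graphs of linear growth (virtually $\Z$, via Imrich--Seifter/Trofimov, or equivalently Woess + Bass--Guivarc'h) to build a bounded-fiber Lipschitz height function and then reduce to a one-dimensional barrier/Borel--Cantelli argument driven by the number-of-jumps variables $\zeta_v$. The paper never appeals to any structure theorem: it uses only the elementary pigeonhole observation that linear growth forces $|B(n_j)\setminus B(n_j-1)|\le K$ along some subsequence $n_j$, and then runs a barrier argument \emph{directly in the graph}, with events $D_j$ (no particle born in the sparse annulus escapes $B(n_j)$) shown to occur infinitely often via Borel--Cantelli II, and $F_j$ (no particle born deep inside escapes $B(n_j)$) shown to occur eventually always via Chernoff plus Borel--Cantelli I. The probabilistic cores are close cousins — both are ``barrier exists i.o.'' arguments — but the paper's version buys self-containedness and uses escape-probability estimates (including a reversibility trick $\bP_x(\tau_y\le t)\lesssim\bP_y(\tau_x\le t)$ and the bound $\sum_x\bP_y(\tau_x\le t+1)\le t+1$), whereas yours leans on a nontrivial classification theorem in exchange for a cleaner 1D computation. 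Your probabilistic step is sound: the $G_n$ are indeed independent, the product lower bound for $\bP(G_n)$ converges because $\tilde J$ has exponential tails and $\bP(\tilde J=0)>0$, and $\bP(H_n)\lesssim n^2 e^{-2cn}$ is summable. One small inaccuracy worth flagging: the $\tilde J_k$ need not be identically distributed (fiber sizes vary from $1$ to $M$); what you actually use, and what is true, is that they are independent with uniformly exponential tails, which suffices for both Borel--Cantelli steps.
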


\begin{proof}
Fix a root vertex for $G$, and let $S_n$ be the sphere $S(n) = B(n+1)\setminus B(n)$.
Since $G$ has linear growth, there exists an infinite sequence of positive integers $n_{j} \rightarrow\infty$ such that for $|S(n_{j})|\leq K$, for some constant $K=K(G)$. We may assume that this sequence satisfies $n_0 \geq \lceil 2t \rceil$ and $n_{j} \geq 2 \cdot n_{j-1}$ for all $j \geq 1$; otherwise, we simply delete some elements from the sequence to achieve this property.

For each $j\geq 1,$ define the event that no particle initially placed in $B(n_{j-1})$ exits the ball $B(n_{j})$ within its lifespan $t$: 
\begin{equation}
    F_{j} := \left\{ \mathrm{R}_x^i \subseteq B(n_{j}) \text{ for all } x \in B(n_{j-1}) \text{ and all } i \in \{ 1, \ldots, \eta_x \} \right\},
\end{equation}
where recall that $\mathrm{R}_x^i$ is the set of vertices traversed by the particle $\omega_x^i$ during its lifespan $t$. 
Furthermore, define the event that no particle in the annulus $B(n_{j})\setminus B(n_{j-1})$ exits $B(n_{j})$:
\begin{equation}
     D_{j} := \left\{ \mathrm{R}_x^i \subseteq B(n_{j})  \text{ for all } x \in B(n_{j}) \setminus B(n_{j-1}) \text{ and all } i \in \{1, \ldots, \eta_x \} \right\}.
\end{equation}
By Poisson thinning, we have
    \begin{equation*}
        \bP_{\lambda,t}(D_{j}) = \exp\left(- \lambda \sum_{x\in B(n_{j})\setminus B(n_{j-1})} \bP_{x}\big(\tau_{B(n_{j})^{c}}\leq t\big)\right)
    \end{equation*} 
for every $j\geq 1$. We observe that the above sum is upper bounded by:
    \begin{align} \label{eq: linear_growth_equation_1}
        \sum_{x\in B(n_{j})\setminus B(n_{j-1})} \bP_{x}(\tau_{B(n_{j})^{c}}\leq t) \leq \sum_{\substack{x\in B(n_{j})\setminus B(n_{j-1})\\ y \in \partial B(n_{j})}} \bP_{x}(\tau_{y} \leq t). 
    \end{align}
Since $G$ is quasi-transitive, $G$ has finite maximum degree. Thus, $\bP_x(\tau_y \le t) \le K_1 \bP_y(\tau_x \le t)$ for some constant $K_1$ depending on the maximum degree. Hence,
    \begin{align*}
        \eqref{eq: linear_growth_equation_1} &\leq K_1 |\partial B(n_{j})| \max_{y \in \partial B(n_{j})} \sum_{x\in B(n_{j})\setminus B(n_{j-1})} \bP_{y}(\tau_{x}\leq t+1) \\
        &\leq K_2 \max_{y} \sum_{x \in V} \bP_{y}(\tau_{x}\leq t+1) \leq K_2 \cdot (t+1),
    \end{align*}
where $K_2 = K_2(G)$ is a positive constant, and in the second inequality, we used that $|\partial B(n_j)| \le K$ holds for all $j\geq1$.
Hence, $\inf_{j} \bP_{\lambda,t}(D_{j}) > 0$. Since the events $\{D_{j}\}$ are independent, by the second Borel-Cantelli lemma, we conclude that $\bP_{\lambda,t}(\bigcap_{n=1}^{\infty} \bigcup_{j=n}^{\infty} D_{j}) = 1$. 

Now, for $F_{j}$, note that
\begin{equation*}
    \bP_{\lambda,t}(F^{c}_{j}) = 1 - \exp\left(- \lambda \sum_{x\in B(n_{j-1})} \bP_{x}(\tau_{B(n_{j})^{c}}\leq t)\right).
\end{equation*}
Furthermore, observe that
\begin{align*}
    \sum_{x\in B(n_{j-1})} \bP_{x}\left( \tau_{B(n_{j})^{c}}\leq t \right) \leq |B(n_{j-1})| \, \bP(\mathrm{Po}(t) > n_{j-1}).
\end{align*}
Note that there exists $K_{3}(G)>0$ such that $|B(n_{j})|\leq K_{3}n_{j}$ for every $j\geq 0$. By Chernoff's inequality, for every $L> 0$, we have
\begin{equation}\label{chernoffupper}
\bP\left( |\mathrm{Po}(t)-t |\geq L \right) \leq 2\exp\left( - \frac{L^{2}}{2(t+L)}\right).
\end{equation}
Also, by our choice of $n_j$'s, we have $n_j \ge 2t$ for any $j \ge 0$.
Therefore, we conclude that 
\begin{align*}
&\bP_{\lambda,t}(F_{j}^{c}) \leq \lambda\sum_{x\in B(n_{j-1})} \bP_{x}(\tau_{B(n_{j})^{c}}\leq t) \\
\leq \,&\lambda \, K_{3} \, n_{j-1} \exp\left(-  \frac{n_{j-1}^{2}}{4(2t+n_{j-1})} \right).
\end{align*}
It is straightforward to verify that the last quantity above is summable over $j \ge 1$. By the first Borel-Cantelli lemma, we have $\bP_{\lambda,t} ( \, \cup_{n = 1}^{\infty} \cap_{j = n}^{\infty} F_{j} \, )=1$. 
Combining both conclusions, we obtain that for any $\lambda,t>0$, we have $\bP_{\lambda,t}(\mathbf{0}\rightarrow\infty ) = 0$, which concludes the proof.
\end{proof}

More generally, we conjecture that the following holds.

\begin{conjecture}
    Let G be a fixed graph and assume that $p_{\mathrm{c}}(G)=1$.  Then, for any $\lambda,t>0$ we have $t_{\mathrm{c}}(\lambda)= \infty,$ and $\lambda_{\mathrm{c}}(t)=\infty.$
\end{conjecture}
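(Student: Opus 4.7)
The plan is a block renormalization that dominates the frog model by a Bernoulli site percolation on a coarse-grained graph quasi-isometric to $G$, with parameter strictly less than the site critical value. Assume throughout that $G$ has bounded degree $\Delta$. Fix $\lambda, t > 0$ and a scale $L > 0$. Choose a maximal $L$-separated set $\Lambda \subseteq V$ containing $\mathbf{0}$, and assign each $x \in V$ to its closest element of $\Lambda$ (breaking ties arbitrarily), producing a partition $V = \bigsqcup_{u \in \Lambda} W_u$ with each cell $W_u \subseteq B_u(L)$. Construct the block graph $H_L = (\Lambda, E_L)$ by declaring $\{u, v\} \in E_L$ iff $d_G(u, v) \le 3L$. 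Then $H_L$ is quasi-isometric to $G$ and has bounded degree. By the quasi-isometry invariance of ``$p_c^{\text{bond}} < 1$'' for bounded-degree graphs, combined with the standard inequalities $p_c^{\text{bond}}(H_L) \le p_c^{\text{site}}(H_L) \le 1 - (1 - p_c^{\text{bond}}(H_L))^{\Delta_{H_L}}$, one obtains $p_c^{\text{site}}(H_L) = 1$ whenever $p_c(G) = 1$.

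Next, for each $u \in \Lambda$ let $J_u$ denote the event that at least one particle initially located in $W_u$ has a walk that exits $W_u$ within its lifespan $t$. Since the cells are disjoint and the underlying Poisson counts and walks are independent across cells, $\{J_u\}_{u \in \Lambda}$ is an independent family. By Poisson thinning each $\bP_{\lambda,t}(J_u) < 1$, and using $|W_u| \le \Delta^L$ we obtain $q := \sup_u \bP_{\lambda,t}(J_u) \le 1 - \exp(-\lambda \Delta^L) < 1$.

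The coupling proceeds as follows. On $\{\mathbf{0} \to \infty\}$ the frog infection visits infinitely many cells $W_u$ (since each cell is finite). The first visit to each new cell $W_v$ happens via some particle originally in a different cell $W_{u'}$ whose walk exited $W_{u'}$; hence $J_{u'}$ holds. Choosing $L$ large enough compared to $t$ so that walks stay within distance $L/2$ of their starting point with high probability, such pairs $(u', v)$ are adjacent in $H_L$, and iterating produces an infinite $J$-site cluster in $H_L$ containing the cell $u_0$ of $\mathbf{0}$. Since $q < 1 = p_c^{\text{site}}(H_L)$, such a cluster has probability zero, giving $\bP_{\lambda,t}(\mathbf{0} \to \infty) = 0$ and hence $t_c(\lambda) = \lambda_c(t) = \infty$.

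The main obstacle is justifying $p_c(H_L) = 1$ from $p_c(G) = 1$. For quasi-transitive $G$ this follows from the resolution of the Benjamini--Schramm conjecture in \cite{PercolationPhaseTran}, but the quasi-isometry invariance of $p_c < 1$ for general bounded-degree graphs is a deeper assertion that may require additional geometric input. A secondary technical point is the handling of rare long-range walks: with super-polynomially small probability (controlled by \eqref{chernoffupper}) a particle travels much further than $L$, creating occasional long-range edges in an enlarged block graph; these must be absorbed by an auxiliary Borel--Cantelli or sparse Bernoulli domination argument so as not to disturb the subcritical site domination above.
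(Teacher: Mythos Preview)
The statement you are addressing is stated in the paper as a \emph{conjecture}, not a theorem: the paper gives no proof, so there is nothing to compare against. The quasi-transitive linear-growth case, where $p_c(G)=1$ is known to be equivalent to linear growth, is already handled by Proposition~\ref{lineargrowth}; the conjecture concerns the general case.

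Your block renormalization is a natural line of attack, and the coupling part is essentially sound: on survival one can extract (via K\"onig's lemma in the activation tree) an infinite chain of cells, each of which must be exited by one of its own particles, hence each satisfies $J_u$; modulo the long-range excursions this produces an infinite $J$-open connected set in $H_L$. The long-range issue is indeed only technical and can be absorbed by a Borel--Cantelli layer as you suggest.

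The genuine gap, which you yourself flag, is the implication $p_c(G)=1 \Rightarrow p_c^{\mathrm{site}}(H_L)=1$. This is exactly one direction of the quasi-isometry invariance of the property ``$p_c<1$'' for bounded-degree graphs, which is open in general. For quasi-transitive $G$ one can close it: $p_c(G)=1$ forces linear growth by \cite{PercolationPhaseTran}, linear growth passes to $H_L$ by quasi-isometry, and any bounded-degree graph of linear growth has $p_c^{\mathrm{site}}=1$ by a direct cutset/Borel--Cantelli argument on sparse spheres. But outside the quasi-transitive world, $p_c(G)=1$ is not known to encode any QI-invariant geometric property, so this route is unavailable. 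Your reduction is therefore correct but trades one open problem for another of comparable depth; it does not extend the class of graphs on which the conjecture is known beyond what Proposition~\ref{lineargrowth} already covers. Note also that you silently add a bounded-degree hypothesis; without it $\sup_u |W_u|$ may be unbounded and your uniform bound $q<1$ can fail.
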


\paragraph{Preliminary discussion and notation for sharpness.} Another important question is whether the phase transition exhibits sharpness with respect to $\lambda$ and $t$. 
Inspired by the elegant proof of sharpness in \cite{MR3477351}, we demonstrate in Section \ref{sharpness} that this is indeed the case for any directed vertex-transitive graph $\vec{G}.$
The proof follows a similar approach to that work by considering a local criterion for criticality, which is then proved to agree with the usual definitions. 
To provide a lower bound for the percolation probability in the supercritical regime, the authors of \cite{MR3477351} derived a differential inequality for the probability that the origin connects to the boundary of a ball of radius $n$, and we aim to do the same.

However, the frog model presents additional technical difficulties, as the directed connections between vertices, $\{ x \rightarrow y \}_{x, y \in V}$, are not independent. 
For instance, models with correlations, such as the Potts model, require more sophisticated techniques to prove exponential decay of correlations in the subcritical regime (see \cite{sharpnesspotts}).
The introduction of two critical parameters --- one for particle lifespan and another for density --- also presents an interesting challenge for the frog model, requiring the derivation of distinct differential inequalities: one with respect to $\lambda$ and another with respect to $t$ for the study of the supercritical regime.

It is worth noting that the argument of \cite{MR3477351} only establishes exponential tail bounds for the intrinsic radius of the cluster in Bernoulli percolation, whereas in the present work, we establish exponential tail bounds for the cluster size of the frog model. 
We point out that other works have proved exponential tail estimates for the cluster size in Bernoulli percolation using stochastic domination techniques, e.g.  \cite{AHL_2025__8__101_0}. 
We also note that our argument is proven for directed vertex-transitive graphs $\vec{G}$. We now provide a few key definitions used in the proof of \cref{mainsharpness}.

Let $S \subseteq V$ be a finite set.
For each $x \in S$, we define $A_{S}(x):=\{1\leq i\leq \eta_{x} : \mathrm{R}_{x}^{i}\cap S^{c} = \varnothing\}$, 
This records the indices of particles starting at $x$ whose trajectories remain inside $S$ throughout their lifespan. 
Moreover, we define 
\begin{align} \label{def: modified_arrow}
\mathcal{R}^{S}_{x}:=\begin{cases}
            \bigcup\limits_{i\in A_S(x)}\mathrm{R}_{x}^{i}, & \text{if $A_S(x) \neq \varnothing$}\\
            \; \; \, \{x\}, & \text{otherwise}.
         \end{cases}
\end{align}
We write $x \xRightarrow{S} y$ if $y \in \mathcal{R}^{S}_{x}$. Then, we say that $x$ activates $y$ internally to $S$, denoted $x \xrightharpoonup{S} y$ if 
\begin{center}
there exists a finite sequence of vertices $x_0, x_1, \ldots, x_{m}$ in $S$ \\
such that $x_0 = x$, $x_m = y$, and $x_{i} \xRightarrow{S} x_{i+1}$ for every $0 \leq i \leq m-1$.
\end{center}
Note that for the event $x \xrightharpoonup{S} y$, we require that all particles involved in the chain of activations remain within $S$ throughout their entire lifespan $t$.
In particular, this event is not monotone in $t$, since particles living longer are more likely to exit the set $S$.
We say $x \xrightharpoonup{S} A$ for $A \subseteq S$ if there exists $y \in A$ such that $x \xrightharpoonup{S} y$. We emphasize that the event $x \xrightharpoonup{S} y$ plays a crucial role in our derivation of the sharpness result.
One should compare the above definition with the previously defined event $x \xrightarrow{S} y$, which is a less restrictive event, as it only requires the existence of a chain of activations from $x$ to $y$ whose vertices are in $S$, but the particles involved in the activation may leave $S$ and return inside. \footnotemark
\footnotetext{We note that these definitions work exactly as stated for directed graphs, since the events $x\xrightarrow{S}y$, $x\xrightharpoonup{S}y,$ and $\tau_{S^{c}}^{x} \leq t$ implicitly account for the graph structure, or alternatively, the random walk jump kernel on $\vec{G}.$ }

Throughout the rest of this paper, we require that $S \subseteq V$ be finite and contain the origin $\boldsymbol{0}$. We define  
\begin{align}\label{phiformula}
    \phi_{\lambda, t}(S) := \sum_{x \in S } \lambda \, \bP_x(\tau_{S^c} \leq t) \, \bP_{\lambda, t}(\mathbf{0} \xrightharpoonup{S} x). 
\end{align}
This quantity has a natural interpretation: it is the expected number of particles satisfying two conditions: (1) the initial position of the particle, denoted by $x$, is activated by a chain of trajectories that remain completely contained in $S$, i.e., $\boldsymbol{0} \xrightharpoonup{S} x$, and (2) the particle exits the set $S$ within its lifetime $t$. That is, we have $\phi_{\lambda,t}(S) = \bE_{\lambda,t}\big[\big|\mathfrak{N}(S)\big | \big]$, where 
$$\mathfrak{N}(S) := \bigcup_{x \in S} \, \left\{ \omega_{x}^{i} : 
1 \leq i \leq \eta_x,\,
\mathbf{0}\xrightharpoonup{S}x,\,  
\mathrm{R}_{x}^{i} \cap S^{c} \neq \varnothing
\right\}.$$

When considering transitive directed graphs in the proof of \cref{mainsharpness}, let $\Delta$ denote the outer degree, that is, $\Delta := |\{y \in V: (\mathbf{0}, y) \in \vec{E}\}|$. 
Let $c = c(\Delta, \lambda, t) := 1/C(\Delta, \lambda, t) \in (0, 1)$, where $C(\Delta, \lambda, t)$ is the constant introduced in Lemma \ref{constantcomparison}.

The explicit form of this constant is not important; however, we note that the function $(\lambda, t) \mapsto c(\Delta, \lambda, t)$ satisfies three properties that will be used: it is continuous on $[0, \infty) \times [0, \infty)$, strictly positive on $(0,\infty) \times (0,\infty),$ and takes value $0$ at $t=0$ or $\lambda=0.$

We introduce two new critical parameters:
\begin{align*}
    \tilde{\lambda}_c(t) &:= \sup\big\{\lambda \geq 0: \inf_{\substack{S \ni \mathbf{0}  \\ S \text{ finite }}} \phi_{\lambda, t}(S) \leq \mathrm{c}(\Delta,\lambda,t) \big\}, \\
    \tilde{t}_c(\lambda) &:= \sup\big\{t \geq 0: \inf_{\substack{S \ni \mathbf{0}  \\ S \text{ finite }}} \phi_{\lambda, t}(S) \leq \mathrm{c}(\Delta,\lambda,t) \big\}.
\end{align*}

Our goal, as in \cite{MR3477351}, is to prove the sharpness of the phase transition for the frog model with respect to the parameters $\tilde{\lambda}_{c}(t)$ and $\tilde{t}_{c}(\lambda)$. This, in turn, will imply that $\lambda_{\mathrm{c}}(t) = \tilde{\lambda}_{c}(t)$ and $t_{\mathrm{c}}(\lambda)=\tilde{t}_{c}(\lambda)$, thus establishing the result. One of the key ideas is to differentiate the connection probability $\bP_{\lambda,t}(\mathbf{0} \rightarrow \Lambda^{c})$ with respect to $\lambda$ and with respect to $t$ separately for any given finite set $\Lambda$. \par

\begin{remark}
(1) In principle, the quantities $\tilde{\lambda}_{c}(t)$ and $\tilde{t}_{c}(\lambda)$ take values in $[0,\infty]$.  
If $\tilde{\lambda}_{c}(t)=\infty$, our proofs show that the model is always subcritical when the lifespan is chosen as $t$;  
conversely, if $\tilde{\lambda}_{c}(t)=0$, then the model is always supercritical, but as a consequence of Proposition \ref{notzero}, which also holds for directed graphs, we will show that $\tilde{\lambda}_{c}(t)\geq 1/t$. 
The same conclusions hold for $\tilde{t}_{c}(\lambda)$.  
In all of these boundary cases, the frog model exhibits no phase transition.

\smallskip

(2) By a coupling argument, one verifies that the map $\lambda \mapsto \phi_{\lambda,t}(S)$ is non-decreasing.  
On the other hand, the monotonicity of  
$t\mapsto \phi_{\lambda,t}(S)$
w.r.t. $t$ is not clear.  
In fact, we suspect that it might not be monotone in $t$, but our proofs do not rely on such a property, and so it is left as an open problem.  
The quantity $\tilde{t}_{c}(\lambda)$ is well-defined (possibly equal to $0$ or $\infty$), and for every $t > \tilde{t}_{c}(\lambda)$ we have
$\inf_{S} \phi_{\lambda,t}(S) \,> c(\Delta,\lambda,t).$
The delicate regime when $t < \tilde{t}_{c}(\lambda)$, is treated in \cref{sec:subcritical}.

\smallskip

(3) In analogy with~\cite{MR3477351}\footnote{Recall that the authors of~\cite{duminilzedd} defined 
$\varphi_{p}(S) := p \sum_{xy \in \partial_{E} S} \bP_{p}(0 \xleftrightarrow{S} x),$
where $\partial_{E} S$ denotes the edge boundary of $S$.},  
our arguments show that if either $t < \tilde{t}_{c}(\lambda)$ or $\lambda < \tilde{\lambda}_{c}(t)$, then 
$\inf_{S} \phi_{\lambda,t}(S) = 0.$
Consequently, the reader should not be surprised that the same proof continues to hold if the constant $c$ is replaced by any smaller positive constant.
\end{remark}

Before we finish this section, we will present a brief overview of the proofs of the main results as well as discuss the related literature. 

The first sharpness result for percolation was proven initially by \cite{aizenman1987sharpness} and \cite{mensikov1986coincidence} for Bernoulli percolation on $\mathbb{Z}^{d}.$ However, our proof draws more inspiration from the recent works of Duminil-Copin and Tassion (see \cite{MR3477351,duminilzedd}). Indeed, as previously observed, the idea of introducing the new critical parameters $\tilde{\lambda}_{c}(t)$ and $ \tilde{t}_{c}(\lambda) $ is directly inspired by their auxiliary quantity $\tilde{p}_{c}$. However, the frog model presents extra technicalities that require new ideas. More fundamentally, unlike Bernoulli percolation, the probabilities of connections in the frog model are not independent. As in \cite{MR3477351}, we analyze two regimes: the supercritical when $\lambda>\tilde{\lambda}_{c}(t)$ (respectively $t > \tilde{t}_{c}(\lambda)$) and the subcritical when $\lambda < \tilde{\lambda}_{c}(t)$ (respectively $t < \tilde{t}_{c}(\lambda)$). In the analysis, we derive that $\lambda_{c}(t)=\tilde{\lambda}_{c}(t)$ (respectively $t_{c}(\lambda) = \tilde{t}_{c}(\lambda)$), which implies the sharpness result. 

To study the supercritical regime, we derive differential inequalities for $\bP_{\lambda,t}(\mathbf{0}\rightarrow\Lambda^{c})$, where $\Lambda$ is an arbitrary finite set. The differential inequalities are now taken with respect to $\lambda$ and $t$ separately, which are somewhat different, but still, via a derivation analogous to the Margulis-Russo formula (for Poisson and Bernoulli random variables), we will compare the resulting quantities to the infimum over $\phi_{\lambda, t}(S) $ for finite sets, for which we apply our hypothesis $\lambda > \tilde{\lambda}_{c}(t)$ (and respectively $t > \tilde{t}_{c}(\lambda)$).

For the subcritical regime, we will define an exploration process that, under the hypothesis that $\phi_{\lambda,t}(S)< c(\Delta,\lambda,t)$ for some finite set $S,$ is stochastically dominated by a subcritical branching process. However, our exploration argument is more subtle than what has been previously done in the literature. Indeed, in the definition $\phi_{\lambda,t}(S)$ we only consider chains of infection that are strictly contained in the set $S$. Thus, to dominate the frog model, we need to count every single vertex of trajectories that escaped $ S $ as children of a generation. When calculating the expected number of children, the quantity is $\phi_{\lambda,t}(S)$ multiplied by a factor of the average number of vertices visited by a trajectory, conditioned on the fact that this trajectory leaves the set $ S.$ This leads to the introduction of a new quantity named $\widetilde{\phi}_{\lambda,t}(S)$ which will be shown to be strictly less than $C(\Delta,\lambda,t) \cdot \phi_{\lambda,t}(S)$ for some $C(\Delta, \lambda, t) \in (1, \infty)$. This constant precisely comes from the expected number of jumps given that a random walk escapes $ S,$ which can be analyzed using transitivity and properties of a simple random walk. Thus, we can show the inequality mentioned above.  Since we chose $c(\Delta,\lambda,t) = 1/C(\Delta,\lambda,t),$ the exploration process is subcritical. The proof will be presented in two subsections in Section \ref{sharpness}, divided into the supercritical and subcritical phases. 
We highlight that our result does not rely on reversibility and is proved for directed vertex-transitive graphs. 
Theorem \ref{mainsharpness} is the first result of its kind for the frog model to the knowledge of the authors.

In \cref{sec: existence}, we prove that for any $\lambda>0,$ we have that $t_{c}(\lambda)<\infty$ for quasi-transitive graphs of superlinear polynomial growth, and non-amenable graphs of bounded degrees. These two classes of graphs exhibit drastically different behaviors, so we divide our proof into two subsections, each dedicated to one setup.

In the first subsection, we focus on quasi-transitive graphs of superlinear polynomial growth. 
In this setup, our proof relies on a renormalization structure for quasi-transitive graphs of polynomial growth similar to the one developed in \cite{MR4529920}. The renormalization structure is a consequence of classical results on quasi-transitive graphs of polynomial growth due to Trofimov \cite{Trofimov1985GRAPHSWP} and the fact that there exists a surjective group homomorphism from any nilpotent group to $\mathbb{Z}^{2}$.
When the graph is of superlinear growth, we can define a ``renormalized graph" such that the Bernoulli site percolation on it has a non-trivial phase transition. 
For a fixed $\lambda>0$, we look at sufficiently large balls around each vertex on the renormalized graph and prove that there are many ``good" frogs inside the ball, in the sense that they activate a constant fraction of the ball. Moreover, we show that with high probability, once a constant fraction of the ball is activated, all the ``neighboring" balls in the renormalized lattice are also activated. In this way, we can compare the frog model with the Bernoulli site percolation on the renormalized graph and conclude that the frogs survive whenever the percolation cluster is infinite.

In \cref{sec:nonamenable}, we define an exploration argument to prove positive probability of survival for a large enough lifetime $t$. 
This uses an idea introduced in \cite{localityconjecture}, which lower bounds the probability of escaping a set in terms of the spectral radius of the random walk on $G$. 
The proof resembles the proof of existence of a phase transition for social networks from \cite{hermon_morris}. 
Essentially, we show that at any stage of the process, there are some frogs that are likely to reach previously unvisited vertices, and thereby activate additional frogs. We now begin the proof of our main results.

\section{Sharpness of the Phase Transition}\label{sharpness}
In this section, we establish the sharpness of the phase transition for the frog model on all directed vertex-transitive graphs. This result has its full strength only when a phase transition is present, since otherwise one of the two regimes becomes vacuous. 
As discussed in \cref{preliminaries}, for every $t>0$ we have 
$0 < \lambda_{\mathrm{c}}(t) < \infty$
whenever the underlying graph is vertex-transitive and has superlinear growth.  
On the other hand, we conjecture that for every $\lambda > 0$ one has $0 < t_{\mathrm{c}}(\lambda) < \infty,$ but in this paper, we could only verify the latter assertion for more restricted classes of graphs.  
In contrast, the sharpness result we prove here holds for all directed vertex-transitive graphs. 
We emphasize that \cref{mainsharpness} remains valid if $\tilde{\lambda}_{c}$ and $\tilde{t}_{c}$ are used in place of $\lambda_{\mathrm{c}}$ and $t_{\mathrm{c}}$.  
We begin the proof by analyzing the supercritical regime.

\subsection{Supercritical Phase}
Recall the definition of $\phi_{\lambda,t}(S)$ in \eqref{phiformula}. We start with a differential inequality with respect to $\lambda$. 
\begin{lemma}\label{partiallambda}
For any $\lambda, t > 0$ and any finite set $\Lambda \subset V$, we have
\begin{align}
    \frac{\partial}{\partial \lambda} \bP_{\lambda, t}(\mathbf{0} \xrightarrow[]{}\Lambda^c) \ge \frac{1}{\lambda} \inf_{\substack{  S \ni \mathbf{0}   \\ S \subseteq \Lambda}} \phi_{\lambda, t}(S) \left( 1 - \bP_{\lambda, t}(\mathbf{0} \xrightarrow[]{}\Lambda^c) \right) \, .
\end{align}
\end{lemma}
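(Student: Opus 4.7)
The plan is to adapt the differentiation strategy of Duminil-Copin--Tassion \cite{MR3477351} to this Poisson setting. First, I would derive a Russo-type formula for the Poisson parameter. Since $A := \{\mathbf{0} \to \Lambda^c\}$ is monotone increasing in the particle configuration --- by the Abelian property, adding a particle can only enlarge the set of eventually-activated vertices, never shrink it --- the Poisson derivative identity $\frac{d}{d\lambda}\bE[f(\eta_v)] = \bE[f(\eta_v+1)-f(\eta_v)]$ applied separately at each $v\in V$ yields
\[
\frac{\partial}{\partial\lambda}\bP_{\lambda,t}(A) \;=\; \sum_{v\in V}\bP_{\lambda,t}(v\text{ is pivotal}),
\]
where ``$v$ pivotal'' means that appending an independent particle $\omega_v^{\ast}$ at $v$ (with trajectory drawn from $\bP_v$) turns $A^c$ into $A$. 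A truncation of $V$ to a sufficiently large ball containing $\Lambda$ handles absolute convergence of the sum.

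Next, for a fixed finite $S\ni \mathbf{0}$ with $S\subseteq \Lambda$, I would lower bound the pivotality at $v\in S$ by the sub-event
\[
E_v(S) \;:=\; \{\mathbf{0}\xrightharpoonup{S} v\}\,\cap\,A^c\,\cap\,\{\omega_v^{\ast}\text{'s trajectory exits }S\text{ within time }t\}.
\]
On $E_v(S)$, the chain realizing $\mathbf{0}\xrightharpoonup{S} v$ already activates $v$ using only particles inside $S$, so $\omega_v^{\ast}$ is activated in the augmented configuration and its trajectory reaches a first-exit vertex $y\in S^c$. If $y\in\Lambda^c$ this directly forces $A$ in the augmented configuration; the case $y\in\Lambda\setminus S$ requires a cascade-style argument exploiting the independence of frogs at vertices of $V\setminus S$ from the chain event (which is measurable with respect to particles located inside $S$) and the freshness of those frogs under $A^c$. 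Since $\omega_v^{\ast}$ is independent of the rest of the configuration, the factor $\bP(\omega_v^{\ast}\text{ exits }S) = \bP_v(\tau_{S^c}\le t)$ factors out, giving
\[
\bP_{\lambda,t}(v\text{ pivotal})\;\ge\;\bP_v(\tau_{S^c}\le t)\cdot\bP_{\lambda,t}\bigl(\mathbf{0}\xrightharpoonup{S} v,\,A^c\bigr).
\]

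Finally, summing over $v\in S$ and decoupling $\bP_{\lambda,t}(\mathbf{0}\xrightharpoonup{S} v, A^c)$ into $\bP_{\lambda,t}(\mathbf{0}\xrightharpoonup{S} v)\,\bP_{\lambda,t}(A^c)$ --- using the product structure of the Poisson configurations at the disjoint vertex sets $S$ and $V\setminus S$, together with a conditioning argument on the explored internal cluster of $\mathbf{0}$ in $S$ --- one arrives at
\[
\frac{\partial}{\partial\lambda}\bP_{\lambda,t}(A)\;\ge\;\frac{1}{\lambda}\,\phi_{\lambda,t}(S)\,\bigl(1-\bP_{\lambda,t}(A)\bigr),
\]
and taking the infimum over admissible $S$ yields the lemma. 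The main obstacle I expect is the second step: handling the case $y\in\Lambda\setminus S$ in a way that preserves the factor $\bP_v(\tau_{S^c}\le t)$ (rather than the strictly smaller $\bP_v(\tau_{\Lambda^c}\le t)$) in the bound. This is precisely where the non-independent nature of the frog-model connections, highlighted throughout the paper's preliminary discussion, forces a genuinely new cascade argument beyond a direct transcription of the Bernoulli percolation proof.
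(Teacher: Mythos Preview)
Your Russo-type starting point is correct, and you have correctly diagnosed the obstacle, but the resolution you sketch does not work, and the paper's proof circumvents it in a way you have missed.

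The gap is twofold. First, for a \emph{fixed} set $S$, the event $E_v(S)$ does not imply pivotality: on $A^c$, if the added particle $\omega_v^{\ast}$ first exits $S$ at some $y\in\Lambda\setminus S$, there is no reason the frogs at $y$ connect to $\Lambda^c$ in the original configuration, and no ``cascade'' can be mounted without losing the clean factor $\bP_v(\tau_{S^c}\le t)$. Second, your decoupling $\bP(\{\mathbf{0}\xrightharpoonup{S}v\}\cap A^c)=\bP(\mathbf{0}\xrightharpoonup{S}v)\,\bP(A^c)$ is false for a fixed $S$: the event $A^c$ depends on \emph{all} particles, including those initially in $S$ that exit $S$. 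Harris--FKG only gives $\le$ here (one event is increasing, the other decreasing), which is the wrong direction for your lower bound.

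The paper resolves both issues at once by never fixing $S$. Instead it introduces the \emph{random} set $\mathcal{L}:=\{x\in\Lambda:x\not\to\Lambda^c\}$ and decomposes the pivotal sum according to $\{\mathcal{L}=S\}$. On $\{\mathcal{L}=S\}$, every vertex of $\Lambda\setminus S$ already connects to $\Lambda^c$, so any added particle at $x\in S$ whose trace hits $S^c$ is automatically pivotal---this is precisely why $\bP_x(\tau_{S^c}\le t)$ (and not $\bP_x(\tau_{\Lambda^c}\le t)$) appears. Moreover, by Poisson thinning the particles at each $y\in S$ split independently into those whose trace stays in $S$ and those that exit; the event $\{\mathbf{0}\xrightharpoonup{S}x\}$ depends only on the former, while $\{\mathcal{L}=S\}$ depends only on the latter together with particles in $S^c$, giving genuine independence. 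Finally, summing $\bP(\mathcal{L}=S)$ over $S\ni\mathbf{0}$ yields exactly $1-\bP_{\lambda,t}(\mathbf{0}\to\Lambda^c)$, and bounding the inner sum by the infimum of $\phi_{\lambda,t}(S)$ completes the argument. The key idea you are missing is that $S$ must be the \emph{random} set $\mathcal{L}$, not an a priori fixed one.
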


In order to prove this result, we first introduce some notation. 
We restrict ourselves to a finite set of vertices $\Lambda$ and identify all the points in $\Lambda^{c}$ as one, denoted by $\mathcal{O}$. 
The restriction is made since its finiteness facilitates a straightforward application of Russo's formula.
Let $\bar{\Lambda}:=\Lambda\cup \{ \mathcal{O} \}.$ 
We consider the following restriction of the particles' traces to $\bar{\Lambda}:$ for each $x\in V$ and $1\leq i \leq \eta_{x}$, we define

\begin{align} \label{restricted_trace}
\mathrm{R}^{\bar{\Lambda},i}_{x}:=\begin{cases}
            \, \, \mathrm{R}_{x}^{i}, & \text{if $\mathrm{R}_{x}^{i} \cap \Lambda^{c} = \varnothing$,}\\
          (\mathrm{R}_{x}^{i}\cap \Lambda) \cup \{\mathcal{O}\}, & \text{if $\mathrm{R}_{x}^{i} \cap \Lambda^{c} \neq \varnothing$}.
         \end{cases}
\end{align}

Let $\Omega_{\Lambda}:= \big\{\big(x,B\big) : x \, {\in} \, \Lambda,B \, {\subseteq} \, \bar{\Lambda}\big\}.$ Let $Y_{(x,B)}:=|\{ 1 {\le} i {\le} \eta_x : \mathrm{R}^{\bar{\Lambda},i}_{x} \, {=} \, B\}|,$ which is the number of particles at $x$ whose (restricted) traces equal $B.$\footnote{We note that we could have instead considered $Z_{(x,B)}:=\mathbbm{1}_{Y_{(x,B)} \ge 1}$. By Poisson thinning, $(Z_{(x,B)}:(x,B) \in \Omega_{\Lambda})$ is a finite collection of independent Bernoulli random variables, with corresponding parameters $p(x,B,\lambda)=1-\exp(-\lambda \bP_x(\mathrm{R}^{\bar{\Lambda}} = B))$. This means that below we could have defined pivotality as in Bernoulli percolation, and applied the fairly standard version of Russo's formula for independent Bernoulli random variables whose parameters are labeled by $\lambda$ with the above form.} 
By Poisson thinning, $Y := \{Y_{(x, B)}\}_{(x, B) \in \Omega_\Lambda}$ forms a finite collection of independent Poisson random variables, where for every $(x, B) \in \Omega_\Lambda$, $Y_{(x, B)}$ has mean $\lambda \cdot \bP_x(\mathrm{R}^{\bar{\Lambda}} = B)$, where $\mathrm{R}$ is the trace by time $t$ and $\mathrm{R}^{\bar{\Lambda}}$ is the corresponding restricted trace as in \eqref{restricted_trace}.
Note that $Y$ takes values in $\N^{\Omega_{\Lambda}}$.
We observe that the event $\{\mathbf{0} \rightarrow \Lambda^{c}\}$ is measurable with respect to the sigma-algebra $\sigma( Y )$. Therefore, considering the map $Y: \Sigma \rightarrow \mathbb{N}^{\Omega_{\Lambda}},$ where $\Sigma$ denotes the space of configurations of the frog model, there exists $A\in \mathcal{P}(\mathbb{N}^{\Omega_{\Lambda}}) ,$ such that $\{\mathbf{0} \rightarrow \Lambda^{c}\} = Y^{-1}(A)$.

Let  $\omega, \omega' \in \N^{\Omega_{\Lambda}}$ and write $\omega = \{\omega_{(x, B)}\}_{(x, B) \in \Omega_\Lambda}$. 
We say $\omega \leq \omega^{\prime}$ if $\omega_{(x, B)} \le \omega_{(x, B)}^{'}$ for every $(x, B) \in \Omega_\Lambda$.
We say that $E\subseteq \N^{\Omega_\Lambda}$ is \textit{increasing} if $\omega \in E$ and $\omega \leq \omega^{\prime}$ imply that $\omega^{\prime} \in E$. 
It is clear that $A$ is increasing.
Let $\omega^{+}_{x,B} := \omega + \vec{\delta}_{(x,B)}$, where $\vec{\delta}_{x,B}$ denotes the vector that is one for the pair $(x,B)$ and zero everywhere else.
For every $(x, B) \in \Omega_\Lambda$, we define the following pivotal event:
$$\text{Piv}^{\mathbf{0}\rightarrow \Lambda^{c}}_{x,B} := \left\{ Y^{+}_{x,B} \in A, Y \not\in A \right\}.$$



\begin{proof}
In a derivation analogous to Russo's formula for Poisson processes (see, for instance, Equation (19.2) of \cite{LecturesPoisson}), we have that: 

\begin{equation}
\begin{split}
    \frac{\partial}{\partial\lambda}\bP_{\lambda,t}\left(\mathbf{0} \rightarrow \Lambda^c\right)
    = \sum_{x\in \Lambda} \sum_{B \subseteq \bar{\Lambda}}\bP_{x}\left(\mathrm{R}^{\bar{\Lambda}} = B\right) \bP_{\lambda,t}\left(\text{Piv}^{\mathbf{0} \rightarrow \Lambda^c}_{x,B}\right) \, .
    \end{split} \label{eq: russo_lambda_1}
\end{equation}
To analyze $\bP_{\lambda,t}\big(Y^{+}_{x,B} \in A, Y \not\in A\big),$ let $\mathcal{L}:=\{x \in \Lambda: x \not\rightarrow\Lambda^{c}\}$ be the set of vertices $x$ in $\Lambda$ such that $x \nrightarrow \Lambda^{c}$. On the event $\{\mathcal{L} = S\}$, observe that $(x,B)$ is pivotal if and only if 
\begin{enumerate}[nosep]
    \item $B \cap (\bar{\Lambda} \, \setminus \, S) \neq \varnothing$ and 
    \item $\mathbf{0}$ activates $x$ internally to $S$.
\end{enumerate}
Note that condition (2) implies that $\mathbf{0}, x \in S$. In particular, $Y_{(x, B')} = 0$ for any $B' \cap S^c \neq \varnothing$. Then, from (1) and (2) it follows that
\begin{align}\label{pivequality}
\frac{\partial}{\partial\lambda}\bP_{\lambda,t}\left(\mathbf{0} \rightarrow \Lambda^c\right) &= \sum_{  \substack{S \ni \mathbf{0}  \\ S \subseteq \Lambda}}  \sum_{x\in \Lambda}\sum_{B \subseteq \bar{\Lambda}}\bP_{x}\left(\mathrm{R}^{\bar{\Lambda}} = B\right)\bP_{\lambda,t}\left(Y^{+}_{x,B} \in A, Y \not\in A, \mathcal{L}=S \right) \\
&= \sum_{  \substack{S \ni \mathbf{0}  \\ S \subseteq \Lambda}} \sum_{x \in S} \bP_{x}\Big(\tau_{S^c} \le t\Big) \, \bP_{\lambda, t}\left(\mathbf{0} \xrightharpoonup{S} x, \mathcal{L} = S\right) \, .
\end{align}
We now claim that the events $\{\mathbf{0} \xrightharpoonup{S} x\}$ and $ \{\mathcal{L} = S\}$ are independent.  Indeed, we can write
\begin{equation}\label{sdependency}
\{ \, \mathcal{L} = S \, \} =  \bigg\{ \text{for all } y \in S, \text{ and } 1\leq i \leq \eta_{y}, \  \mathrm{R}_{y}^{i} \cap S^{c} = \varnothing \bigg\} \bigcap \, \left\{ \, \forall \, y \in S^{c}, \, y \xrightarrow{S^c} \Lambda^c \right\} \, .
\end{equation}
Note that the rightmost event depends only on the particles whose initial positions are in $S^c$. 
Additionally, by Poisson thinning, we can split the $\text{Po}(\lambda)$ number of particles on each $y \in S$ into two disjoint and independent collections according to whether the particles exit the set $S$ or not. 

Now, it is a straightforward observation that the event $\{\mathbf{0} \xrightharpoonup{S} x\}$ depends on the particles which do not exit $S$, while the first set in \eqref{sdependency} only depends on particles that do exit $S$, implying the claim. It then follows that:
\begin{align}
    \frac{\partial}{\partial\lambda} \bP_{\lambda, t}(\mathbf{0} \rightarrow \Lambda^c) & = \sum_{ \substack{S \ni \mathbf{0}  \\ S \subseteq \Lambda}}\sum_{x \in S} \bP_{x}(\tau_{S^{c}} \le t) \bP_{\lambda, t}(\mathbf{0} \xrightharpoonup{S} x) \bP_{\lambda, t}(\mathcal{L} = S), \nonumber \\
    & \ge \frac{1}{\lambda}\inf_{ \substack{S \ni \mathbf{0} \\ S \subseteq \Lambda}} \phi_{\lambda, t}(S) \left( 1 - \bP_{\lambda, t}(\mathbf{0} \xrightarrow[]{}\Lambda^c) \right),
\end{align}
which completes the proof.
\end{proof} 

\begin{proof}[Proof of item $(1)$ of  Theorem \ref{mainsharpness}]
Throughout the proof, fix an arbitrary $t>0.$
First, when $\tilde{\lambda}_c(t) = \infty$, we will show in \cref{sec:subcritical} that $\lambda_c(t) = \infty$, which makes item $(1)$ vacuous. 

Second, we show that $\tilde\lambda_c(t) \ge 1/t$. In particular, we have $\tilde\lambda_c(t) > 0$. We argue by contradiction. Let $\tilde{\lambda}_c(t) < \lambda < 1/t$. Take a sequence of balls, $\vec{B}_{\mathbf{0}}(n):=\{x \in V: d_{\vec{G}}(0,x)\leq n\}.$
Consider $|\mathfrak{N}(\vec{B}_{\mathbf{0}}(n))|$ for $n \ge 1$. On the one hand, it follows from $\lambda > \tilde{\lambda}_c(t)$ that for any $n$, we have $\bE_{\lambda, t}[|\mathfrak{N}(\vec{B}_{\mathbf{0}}(n))|] > c(\Delta, \lambda, t)$. On the other hand, it follows from Proposition \ref{notzero} (the proof also holds for directed vertex-transitive graphs) that $|\mathfrak{N}(\vec{B}_{\mathbf{0}}(n))| \rightarrow 0$ almost surely. Moreover, let $\mathcal{T}$ be as in Proposition \ref{notzero}. Then it follows from $\bE_{\lambda,t}(s^{\mathcal{T}}) < \infty $ for some $s>1$, that $\bP_{\lambda, t}(\mathcal{T} \ge k)$ decays exponentially in $k$ (see Equation \eqref{eq: exp_decay} for more details). In particular, $\mathcal{T}$ is integrable. Therefore by the Dominated Convergence theorem, we have $\lim_{n \rightarrow \infty}\bE_{\lambda, t}[|\mathfrak{N}(\vec{B}_{\mathbf{0}}(n))|] = 0$, thus implying a contradiction.

Throughout the rest of the proof, assume that $\tilde{\lambda}_c := \tilde{\lambda}_c(t) < \infty$.
We now observe that Lemma \ref{partiallambda} implies Equation \eqref{suplambda}. Let $f(\lambda):= \bP_{\lambda, t}(\mathbf{0} \rightarrow \Lambda^c)$ and note that $\inf_{ \substack{S \ni \mathbf{0} \\ S \subseteq \Lambda}} \phi_{\lambda,t}(S)> \mathrm{c}(\Delta,\lambda,t)$ holds for any $\lambda > \tilde{\lambda}_c$. From the lemma above, we get the following differential inequality:
$$\frac{f^{\prime}(\lambda)}{1-f(\lambda)} > \frac{\mathrm{c}(\Delta,\lambda,t)}{\lambda}\, , \quad \text{for $\lambda\in (\tilde{\lambda}_{c}, +\infty)$} \, . $$ 


Now, fix $t> 0$ and let $\lambda_0 := \sup \{\lambda > 0: f(\lambda) \leq 1/2 \}$. It follows from the definition of $\lambda_0$, that $f(\lambda) > 1/2$ for any $\lambda > \lambda_0$.
Then, for every $\lambda$ such that  $2\tilde{\lambda}_{c}\geq\lambda > \text{max}\{\lambda_{0},\tilde{\lambda}_{c}\}$, we have that
\begin{align*}
    f(\lambda) > \frac{1}{2} \ge \frac{\lambda - \tilde{\lambda}_{c}}{2\tilde{\lambda}_{c}} \, .
\end{align*}
Furthermore, we have that for every $\lambda$ such that  $\text{min}\{\lambda_0,2\tilde{\lambda}_{c}\} \geq \lambda \geq \tilde{\lambda}_{c}$, 
\begin{align*}
    f'(\lambda) > \frac{\mathrm{c}(\tilde{\lambda}_{c})}{4\tilde{\lambda}_{c}}, 
\end{align*}
where we observe that $\tilde \lambda_{c}$ implicitly depends on the graph $\vec{G}$ and $t$, and we write $\mathrm{c}(\tilde{\lambda}_{c})$  for the smallest value for $\mathrm{c}(\Delta, \lambda, t)$ on the interval $[\tilde{\lambda}_{c}(t),2\tilde{\lambda}_{c}(t)]$, which is a strictly positive quantity by uniform continuity. So, by integrating from $\tilde{\lambda}_{c}$ to $\lambda,$ we get 

\begin{align*}
    f(\lambda) >  \frac{\mathrm{c}\big(\tilde{\lambda}_{c}\big)}{4\tilde{\lambda}_{c}}(\lambda - \tilde{\lambda}_{c}) \, .
\end{align*}
By taking $K\big(\tilde\lambda_{c}(t)\big):= \text{min}\Big\{\frac{1}{2\tilde{\lambda}_{c}},\frac{\mathrm{c}(\tilde{\lambda}_{c})}{4\tilde{\lambda}_{c}}\Big\}$ we conclude that 

\begin{align*}
    f(\lambda) \geq K(\lambda - \tilde{\lambda}_c) \, ,
\end{align*}
for $\lambda\in [\tilde{\lambda}_{c}(\lambda),2\tilde{\lambda}_{c}(\lambda)].$  Finally, we complete the proof by taking the sequence of finite sets $\Lambda_n:= \vec{B}_{\mathbf{0}}(n)$ and letting $n\rightarrow\infty$.
\end{proof}

We now begin to prove the second result in this subsection.

\begin{lemma}\label{partialt}
For any $\lambda, t > 0$ and any finite set $\Lambda \subset V$, we have
\begin{align}
    \frac{\partial}{\partial t} \bP_{\lambda, t}(\mathbf{0} \xrightarrow[]{} \Lambda^c) \ge \frac{\lambda e^{-t}}{ t} \inf_{\substack{  S \ni \mathbf{0}  \\ S \subseteq \Lambda}} \phi_{\lambda, t}(S) \left( 1 - \bP_{\lambda, t}(\mathbf{0} \xrightarrow[]{}\Lambda^c) \right).
\end{align}
\end{lemma}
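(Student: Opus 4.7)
The strategy mirrors the proof of Lemma \ref{partiallambda} with $\lambda$ replaced by $t$ throughout. Using the independent Poisson variables $Y = (Y_{(x, B)})_{(x,B)\in\Omega_\Lambda}$ with means $\lambda\, p_{x,B}(t)$, where $p_{x,B}(t) := \bP_x(\mathrm{R}^{\bar\Lambda} = B)$, the Poisson analogue of Russo's formula combined with the chain rule $\partial_t [\lambda\, p_{x,B}(t)] = \lambda\, \partial_t p_{x,B}(t)$ gives
\[
\frac{\partial}{\partial t}\bP_{\lambda,t}(\mathbf{0}\to\Lambda^c) \;=\; \lambda \sum_{(x,B)\in\Omega_\Lambda} \frac{\partial p_{x,B}(t)}{\partial t}\, \bP_{\lambda,t}\bigl(\mathrm{Piv}^{\mathbf{0}\to\Lambda^c}_{(x,B)}\bigr).
\]
The individual derivatives $\partial_t p_{x,B}(t)$ can be of either sign (since $\sum_B p_{x,B}(t) \equiv 1$), but the relevant partial sums below will turn out to be non-negative.

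I would then repeat the pivotal decomposition from the proof of Lemma \ref{partiallambda} verbatim. Conditioning on $\{\mathcal{L}=S\}$, pivotality of $(x,B)$ is equivalent to $x\in S$, $B\cap(\bar\Lambda \setminus S)\neq\varnothing$, and $\mathbf{0}\xrightharpoonup{S}x$, and the same Poisson-thinning decomposition of the particles into ``internal'' and ``exiting'' sub-families yields the independence of $\{\mathbf{0}\xrightharpoonup{S}x\}$ and $\{\mathcal{L}=S\}$. Interchanging sums and summing over the admissible $B$'s makes the signed contributions telescope into the density of the first-exit time:
\[
\sum_{B:\, B\cap(\bar\Lambda\setminus S)\neq\varnothing}\frac{\partial p_{x,B}(t)}{\partial t} \;=\; \frac{d}{dt}\bP_x(\tau_{S^c}\le t) \;\ge\; 0,
\]
so that
\[
\frac{\partial}{\partial t}\bP_{\lambda,t}(\mathbf{0}\to\Lambda^c) \;=\; \sum_{\substack{S\ni\mathbf{0}\\ S\subseteq\Lambda}} \bP_{\lambda,t}(\mathcal{L}=S) \sum_{x\in S} \lambda\, \frac{d}{dt}\bP_x(\tau_{S^c}\le t)\, \bP_{\lambda,t}(\mathbf{0}\xrightharpoonup{S}x).
\]

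The only new technical ingredient, and the part I expect to be the main obstacle, is an elementary lower bound on the density of $\tau_{S^c}^x$ in terms of its CDF, which is the source of the Poisson factor in the prefactor of the lemma. Let $K^x\ge 1$ denote the discrete-time first-exit step of the simple random walk from $x$; then, conditional on $K^x=k$, the continuous-time hitting time $\tau_{S^c}^x$ is a sum of $k$ i.i.d.\ $\mathrm{Exp}(1)$ clocks, i.e., $\mathrm{Gamma}(k,1)$-distributed. Hence it suffices to prove the Poisson-tail inequality
\[
\frac{kt^k}{k!}\;\ge\;\bP(\mathrm{Po}(t)\ge k), \qquad k\ge 1,\ t\ge 0,
\]
which follows by observing that $h_k(t):=kt^k/k!-\bP(\mathrm{Po}(t)\ge k)$ vanishes at $t=0$ and has derivative $h_k'(t)=(k-e^{-t})\,t^{k-1}/(k-1)!\ge 0$. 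Averaging over $K^x$ preserves the inequality, giving $\tfrac{d}{dt}\bP_x(\tau_{S^c}\le t) \ge (e^{-t}/t)\,\bP_x(\tau_{S^c}\le t)$.

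Substituting this back into the identity for the derivative, recognising $\phi_{\lambda,t}(S) = \sum_{x\in S}\lambda\,\bP_x(\tau_{S^c}\le t)\,\bP_{\lambda,t}(\mathbf{0}\xrightharpoonup{S}x)$, and using $\sum_S \bP_{\lambda,t}(\mathcal{L}=S) = 1-\bP_{\lambda,t}(\mathbf{0}\to\Lambda^c)$ yields the claimed differential inequality. The rest of the argument is a direct adaptation of the $\lambda$-derivative case and presents no additional difficulty; the only genuinely new input is the Poisson-tail bound above.
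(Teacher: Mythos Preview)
Your proof is correct and follows essentially the same route as the paper: the same decomposition by $\{\mathcal{L}=S\}$, the same pivotality characterization and Poisson-thinning independence argument, and the identical Gamma/Poisson-tail inequality $f_{U_k}(t)\ge (e^{-t}/t)\,\bP(U_k\le t)$ (which the paper simply asserts as a ``straightforward calculation'' and you actually carry out). The only packaging difference is that you obtain the $t$-derivative by applying the Poisson Russo formula directly to the $Y_{(x,B)}$ variables and telescoping the signed $\partial_t p_{x,B}(t)$ into $\tfrac{d}{dt}\bP_x(\tau_{S^c}\le t)$, whereas the paper introduces a per-vertex lifespan coupling $\mathbf{t}=(t_x)_{x\in\Lambda}$ and a chain rule over $x$; both routes land on the identical intermediate identity before the Gamma density bound is applied.
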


\begin{proof}
We now derive a Russo-type formula with respect to $t$ analogously to the case of Bernoulli percolation  (see \cite{GRIbook}).
For each fixed $\lambda>0$, we now describe a monotone coupling in the lifespan $t$ which allows particles initialized on different vertices to have different lifespans. For each $x$, let $t_x > 0$ be the lifespan of all particles initialized at $x$. 
For each vertex $x \in V$, we consider the natural coupling of the model in which $\eta_{x}$ is distributed according to $\text{Po}(\lambda)$. And, for each particle $(\omega_{x}^{i})_{1\leq i \leq \eta_{x}},$ we sample i.i.d. continuous-time simple random walks starting from $x$ with infinite lifespan denoted by $(Y_{x}^{i}(s))_{s \geq 0}.$ Now, given a fixed $t_x>0$, we consider for each $x\in V$ the restriction of the random walks $1 \leq i \leq \eta_x$ up to time $t_x$, i.e. $(Y_{x}^{i}(s))_{0\leq s \leq t_x}.$ We denote the probability measure associated with the frog model with particle density $\lambda$ and lifespan $\mathbf{t} := (t_x)_{x \in \Lambda}$ as $\bP_{\lambda,\mathbf{t}}$ and the measure of the coupled probability space as $\bP_{\lambda}$. Moreover, for a fixed $\mathbf{t}$, we add the subscript to the previously defined notation $\mathcal{L}$ and write $\mathcal{L}_{\mathbf{t}}$ to denote the (random) vertex set that is not connected to $\Lambda^c$ under lifespan $\mathbf{t}$. Also, with a slight abuse of notation, we write $\mathbf{t}$ above the arrows to specify its dependency on the lifespan in the coupled probability space, for example $\xrightarrow{\mathbf{t}}$ and $\xrightharpoonup{S, \, \mathbf{t}}$.

Let $\varepsilon > 0$ and $\vec{\varepsilon}_x := (\varepsilon \1_{x = y})_{y \in \Lambda}$. One can understand the frog model as a directed percolation model, and on the event $\{\mathcal{L}_{\mathbf{t}}=S\}$, adding lifespan $\varepsilon$ to the particles of $x\in S $ is pivotal to $\mathbf{0} \rightarrow \Lambda^c$ when (1) $\mathbf{0} \xrightharpoonup{S,\, \mathbf{t}} x$ and (2) this causes an edge from $x$ to $\Lambda^c$ to be added, i.e. a frog in $x$ jumps to $\Lambda^c$ between $t_x$ and $t_x + \varepsilon$. Observe that (2) happens with probability $1 {-} \exp(-\lambda \bP_{x}(\tau_{S^{c}} \in (t,t+\varepsilon])$.
We add the superscript $\mathbf{t}$, and denote by $A^{\mathbf{t}}_{S}(x)$ (see Section \ref{preliminaries}) as the set of particles that belong to $x \in S$ and do not exit $S$ in its lifespan $t_x$. 
Observe that:
\begin{align*}
    & \bP_{\lambda,\mathbf{t} + \vec{\varepsilon}_x}\Big(\mathbf{0} \rightarrow \Lambda^c\Big) - \bP_{\lambda,\mathbf{t}}\Big(\mathbf{0} \rightarrow \Lambda^c\Big)
	= \bP_{\lambda} \left( \left\{ \mathbf{0} \xrightarrow{\mathbf{t} + \vec{\varepsilon}_x} \Lambda^c \right\} \setminus \left\{ \mathbf{0} \xrightarrow{\mathbf{t}} \Lambda^c \right\} \right) \\[0.7em]
    & \; \; = \sum_{\substack{S \ni \boldsymbol{0} \\ S \subseteq \Lambda}} \bP_{\lambda}\left( \{ \mathcal{L}_{\mathbf{t}} = S \} \cap \left( \{ \mathbf{0} \xrightarrow{\mathbf{t} + \vec{\varepsilon}_x} \Lambda^c \} \setminus \{ \mathbf{0} \xrightarrow{\mathbf{t}} \Lambda^c \} \right) \right) \\
    & \; \; = \sum_{\substack{S \ni \boldsymbol{0} \\ S \subseteq \Lambda}} \bP_{\lambda} \left( \left\{ \mathcal{L}_\mathbf{t} = S \right\} \cap \left\{ \boldsymbol{0} \xrightharpoonup{S, \, \mathbf{t}} x \right\} \cap \left\{ \exists i\in A^{ \mathbf{t}}_{S}(x), \tau^{x}_{S^{c}}(i)\in (t_{x},t_{x}+\varepsilon] \right\} \right) \\
    & \; \; = \sum_{\substack{S \ni \boldsymbol{0} \\ S \subseteq \Lambda}} \bP_{\lambda,t} \left(  \mathcal{L} = S  \right) \bP_{\lambda,t} \left(  \boldsymbol{0} \xrightharpoonup{S} x  \right) \left( 1 - e^{- \lambda \bP_x(\tau_{S^c}\in (t_{x}, t_{x}+\varepsilon]) } \right).
\end{align*}
The last identity is valid, since by the discussion following Equation \eqref{sdependency}, the event $\{ \mathcal{L}_{\mathbf{t}} = S\}$ is independent of  $\{\mathbf{0} \xrightharpoonup{S}x\}$ and
$\big\{{\exists i\in A^{\mathbf{t}}_{S}(x), \tau^{x}_{S^{c}}(i)\in (t_{x},t_{x}+\varepsilon]}\big\}$. Moreover, the last two events are also independent since the first one does not depend on particles on $x$ while the second one only concerns particles on $x$. 
Additionally, let $\vec{t} := (t, \ldots, t)$, then by the chain rule:

\begin{align*}
    &\frac{\partial}{\partial t}\bP_{\lambda,t}(\mathbf{0} \rightarrow \Lambda^c) = \sum_{x \in \Lambda } \frac{\partial}{\partial t_x} \bP_{\lambda, \vec{t} }\,(\mathbf{0} \rightarrow \Lambda^c) \\
    = & \sum_{x \in \Lambda}\sum_{\substack{S \ni \boldsymbol{0} \\ S \subseteq \Lambda}} \bP_{\lambda, t} \left(  \mathcal{L} = S  \right) \bP_{\lambda, t} \left(  \boldsymbol{0} \xrightharpoonup{S} x \right) \lim_{\varepsilon \rightarrow 0}\frac{\lambda\bP_{x}\big(\tau_{S^{c}} \in (t,t+\varepsilon] \big)}{\varepsilon} \, .
\end{align*}

Let $(Y_{x}(s))_{s \ge 0}$ be a continuous-time simple random walk starting at $x$, and let $(X(n))_{n=0}^{\infty}$ be the discrete-time simple random walk extracted from its jumps. Given a continuous-time random walk starting at $x,$ let $U_x(k)$ be the sum of the first $k$ jumps of said random walk. Observe that $U_{x}(k)$ is the sum of $k$ independent $\mathrm{Exp}(1)$ random variables, and hence, it is distributed according to a Gamma distribution with shape $k$ and rate $1$, denoted by $\Gamma(k, 1)$. Let $U_k \sim \Gamma(k, 1)$ and let $\nu_{S^c} := \{ n \ge 0: X(n)\in S^c \}$. Additionally, let $\bP$ be the measure associated with such a random variable for each $k$. Observe that 
\begin{align} \label{eq: cts-dis-exit}
    \bP_{x}\big(\tau_{S^{c}} \in (t,t+\varepsilon] \big) &= \sum_{k = 1}^{+\infty} \, \bP_x(\nu_{S^c} = k) \, \bP\big(U_k \in (t, t+\varepsilon]\big) \, .
\end{align}
Let $f_{U_k}$ be the density function of $U_k$. Therefore, by dividing by $\varepsilon$ and taking limits as $\varepsilon\rightarrow0$ we have that \footnote{We can exchange limits with sums since $f_{U_k}(t) = \frac{1}{(k-1)!} t^{k-1} e^{-t}$.}
\begin{align}
    \frac{\partial}{\partial t} \, \bP_{\lambda,t}(\mathbf{0} \rightarrow \Lambda^c) = \sum_{\substack{  S \ni \mathbf{0}  \\ S \subseteq \Lambda}} \bP_{\lambda, t}(\mathcal{L} = S) \cdot\lambda\sum_{x \in S}  \bP_{\lambda, t}(\mathbf{0}\xrightharpoonup{S} x) \sum_{k = 1}^{+\infty} \, \bP_x(\nu_{S^c} = k) \, f_{U_k}(t) \, .
\end{align}
It is a straightforward calculation to check that for any $t \ge 0$ and $k \ge 1$, we have $f_{U_k}(t) \ge \frac{e^{-t}}{t} \bP(U_k \le t)$. We may conclude that 
\begin{align*}
    \frac{\partial}{\partial  t}  \bP_{\lambda,t}(\mathbf{0} \rightarrow \Lambda^c) &\geq \sum_{\substack{  S \ni \mathbf{0}  \\ S \subseteq \Lambda}} \bP_{\lambda, t}(\mathcal{L} = S) \cdot\lambda\sum_{x \in S}  \bP_{\lambda, t}(\mathbf{0}\xrightharpoonup{S} x) \sum_{k = 1}^{+\infty} \, \bP_x(\nu_{S^c} = k) \, \frac{e^{-t}}{t} \bP(U_k \le t) \\
    &= \frac{\lambda e^{-t}}{t} \sum_{\substack{  S \ni \mathbf{0}  \\ S \subseteq \Lambda}} \bP_{\lambda, t}(\mathcal{L} = S) \cdot \sum_{x \in S}  \bP_{\lambda, t}(\mathbf{0}\xrightharpoonup{S} x) \bP_x(\tau_{S^c} \le t) \\
    &\ge \frac{\lambda e^{-t}}{t} \inf_{ \substack{S \ni \mathbf{0} \\ S \subseteq \Lambda}} \phi_{\lambda, t}(S) \left( 1 - \bP_{\lambda, t}(\mathbf{0} \xrightarrow[]{}\Lambda^c) \right),
\end{align*}
where in the second line we used Equation \eqref{eq: cts-dis-exit} with $(t, t+\varepsilon]$ replaced by $(0, t]$.
\end{proof}

\begin{proof}[Proof of item $(2)$ of Theorem \ref{mainsharpness}]
Throughout the proof, fix an arbitrary $\lambda>0.$
First, when $\tilde{t}_c(\lambda) = \infty$, we will show in \cref{sec:subcritical} that $t_c(\lambda) = \infty$, which makes item $(2)$ vacuous. 
Observe that analogously to the proof of item (1), we have that $\tilde{t}_{c}(\lambda)\geq 1/\lambda.$ In particular, we have $\tilde{t}_{c}(\lambda) > 0$.

Throughout the rest of the proof, assume that $\tilde{t}_c := \tilde{t}_c(\lambda) < \infty$.
Now, we observe how we can derive the Equation \eqref{supt} from Lemma \ref{partialt}. Let $h(t):= \bP_{\lambda, t}(\mathbf{0} \rightarrow \Lambda^c).$ Then, 
$$\frac{h^{\prime}(t)}{1-h(t)} > \frac{\lambda \cdot\mathrm{c}(\Delta,\lambda,t)}{e^{t}\cdot t}, \text{\ for $t\in (\tilde{t}_{c}, +\infty )$}. $$
Let $t_0 := \sup \{t > 0: h(t) \leq 1/2 \}$. By definition, we have $h(t) > 1/2$ for any $t > t_0$.  Then, for every $t$ such that  $2\tilde{t}_{c}\geq t > \text{max}\{t_{0},\tilde{t}_{c}\}$, we have that 
\begin{align*}
    h(t) > \frac{1}{2} \ge \frac{t - \tilde{t}_{c}}{2\tilde{t}_{c}} \, .
\end{align*}
Furthermore, we have that for every $t$ such that  $\text{min}\{t_{0},2\tilde{t}_{c}\} \geq t \geq \tilde{t}_{c}$, 
\begin{align*}
    h'(t) > \frac{ \lambda \cdot \mathrm{c}(\tilde t_{c})}{4  \tilde{t}_{c} \cdot e^{2\tilde{t}_{c}}},
\end{align*}
where we write $\mathrm{c}(\tilde{t}_{c})$ for the smallest value for $\mathrm{c}(\Delta, \lambda, t)$ on the interval $[\tilde{t}_{c}(\lambda),2\tilde{t}_{c}(\lambda)]$.
Integrating from $\tilde{t}_{c}$ to $t$ gives us 
$$ h(t) > \frac{ \lambda \cdot \mathrm{c}(\tilde t_c)}{4 \tilde{t}_{c} \cdot  e^{2\tilde{t}_{c}}}\big(t-\tilde{t}_{c}\big).$$ 
By taking $K\big(\tilde{t}_{c}(\lambda)\big):= \text{min}\Big\{\frac{1}{2\tilde{t}_{c}},\frac{\lambda \cdot  \mathrm{c}({\tilde t_{c}})}{4\tilde{t}_{c} \cdot e^{2\tilde{t}_{c}}} \Big\}$ we conclude that 
\begin{align*}
    h(t) \geq K(t - \tilde{t}_c) \, .
\end{align*}
for $t\in [\tilde{t}_{c}(\lambda),2\tilde{t}_{c}(\lambda)].$ The proof is once again completed by taking the sequence $\Lambda_n:= \vec{B}_{\mathbf{0}}(n)$ and letting $n$ go to infinity.  
\end{proof}

With the proof of the results for the supercritical regime concluded, we now study the subcritical regime. 

\subsection{Subcritical Phase} \label{sec:subcritical}
In order to study the subcritical regime, we first compare $\phi(S)$ with the following quantity. Let $S \subset V$ be a finite connected set containing the origin, and define

\begin{align}\label{phiformulaplus}
     \widetilde{\phi}_{\lambda, t}(S) := \sum_{ \substack{x \in S}} \lambda \bP_x(\tau_{S^c} \leq t) \bP_{\lambda, t}(\mathbf{0} \xrightharpoonup{S} x)  \bE_x\left[ N(t) \, \big\vert \, \tau_{S^c} \le t \right],
\end{align}
where $N(t),$ defined in Section \ref{preliminaries}, denotes the number of jumps  a random walk (starting at $x$) performs by time $t$. We begin with the following statement:

\begin{lemma}\label{constantcomparison}
   Fix a transitive directed graph $\vec{G}$ with outer degree $\Delta$. For any $\lambda, t > 0$, there exists a positive and finite constant $C(\Delta,\lambda,t)$ such that
    \begin{align}
        \widetilde{\phi}_{\lambda, t}(S)< C(\Delta,\lambda, t) \, \phi_{\lambda, t}(S) \, ,
    \end{align}
    holds for any finite, connected $S \subset V$ containing the origin. Moreover, $c(\Delta,\lambda,t) := 1/C(\Delta,\lambda,t)$ is continuous in $\lambda$ and $t$ on $[0, \infty) \times [0, \infty).$ 
\end{lemma}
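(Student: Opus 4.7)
The plan is to control $\bE_x[N(t)\mathbf{1}_{\tau_{S^c}\le t}]$ by splitting it according to whether jumps occur before or after the first exit time $\tau_{S^c}$, and then to sum the resulting bound over $x\in S$ with the weight $\pi_x:=\bP_{\lambda,t}(\mathbf{0}\xrightharpoonup{S}x)$, exploiting the vertex-transitivity of $\vec{G}$. Writing $\tau_{S^c}=U_{\nu_{S^c}}$, where $\nu_{S^c}$ is the discrete number of jumps before exit and $U_k$ is the $k$-th jump time of the rate-$1$ clock, the decomposition $N(t)=\nu_{S^c}+(N(t)-\nu_{S^c})$ on $\{\tau_{S^c}\le t\}$ together with the strong Markov property at $\tau_{S^c}$ shows that the post-exit contribution satisfies $\bE_x[(N(t)-\nu_{S^c})\mathbf{1}_{\tau_{S^c}\le t}]\le t\,\bP_x(\tau_{S^c}\le t)$, accounting for at most $t\,\phi_{\lambda,t}(S)$ in $\widetilde\phi_{\lambda,t}(S)$.

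For the pre-exit contribution $\sum_x\lambda\pi_x\,\bE_x[\nu_{S^c}\mathbf{1}_{\tau_{S^c}\le t}]$, I would expand $\nu_{S^c}=\sum_{j\ge 0}\mathbf{1}_{\nu_{S^c}>j}$ and apply the strong Markov property at the $j$-th discrete step. Using the independence of $U_j\sim\Gamma(j,1)$ from the underlying discrete walk and then changing variable to $y=X(j)$, the problem reduces to controlling $\sum_x\pi_x\,p_j^S(x,y)$, where $p_j^S(x,y)$ is the probability that the walk from $x$ stays in $S$ for $j$ steps and lands at $y$. The crucial input is a \emph{chain-extension bound} of the form $\sum_x\pi_x\,p_j^S(x,y)\le K(\Delta,\lambda,t)^{j}\,\pi_y$, obtained by reinterpreting each random-walk step as an internal activation step via a fresh particle at the current vertex (available with probability $\ge 1-e^{-\lambda}$) whose first jump lands at the correct out-neighbor within time $t$ (with conditional probability $\ge(1-e^{-t})/\Delta$). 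Combined with the pointwise estimate $\bE[\bP_y(\tau_{S^c}\le t-U_j)]\le \bP(\mathrm{Poi}(t)\ge j)\,\bP_y(\tau_{S^c}\le t)$ and summed over $y$ and $j$, the pre-exit contribution is bounded by $\bigl(\sum_{j\ge 0}K^{\,j}\bP(\mathrm{Poi}(t)\ge j)\bigr)\cdot\phi_{\lambda,t}(S)$, a convergent series because the super-exponential decay $(et/j)^{j}$ of the Poisson tail dominates any geometric factor $K^{\,j}$.

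Combining the two contributions gives the claim with $C(\Delta,\lambda,t)=t+\sum_{j\ge 0}K(\Delta,\lambda,t)^{j}\bP(\mathrm{Poi}(t)\ge j)$. Continuity of $c:=1/C$ on $[0,\infty)\times[0,\infty)$ and its vanishing at $\lambda=0$ or $t=0$ follow by inspection of this formula, using that the extension constant $K(\Delta,\lambda,t)$ blows up at the boundary of parameter space (since $1-e^{-\lambda}\to 0$ as $\lambda\to 0$ and $1-e^{-t}\to 0$ as $t\to 0$), forcing $C\to\infty$ there. The main technical obstacle is the chain-extension bound itself: rigorously converting a random-walk step into an internal activation step via a fresh particle requires a careful use of Poisson thinning, together with the abelian property of the frog model to decouple the extension from the original chain from $\mathbf{0}$ to $x$. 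The absence of reversibility on a directed graph precludes the naive swap arguments available in the undirected case, and some care is needed to account for possible overlap between the vertices appearing in the original chain and those used in the $j$-step extension to $y$.
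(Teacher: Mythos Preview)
Your approach is correct in outline and genuinely different from the paper's. The paper stratifies $S$ by the distance $r=d_{\vec G}(x,S^c)$ and proves two auxiliary estimates: $\bE_x[N(t)\mid\tau_{S^c}\le t]\le\Delta^r(t+r)$ (forcing at least $r$ jumps by time $t$) and $\sum_{x\in S_r}\pi_x\le K^{r-1}\sum_{x\in S_1}\pi_x$ (extending the activation chain from depth $r$ to depth $1$ via FKG). It then bounds the numerator of $\widetilde\phi/\phi$ by combining these, and bounds the denominator below by restricting to $S_1$. Your decomposition instead stratifies by the number $j$ of pre-exit steps of the walk: the post-exit bound $\le t$ is immediate, and the pre-exit part reduces to the chain-extension bound. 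Both routes rest on the same FKG step---the paper's one-jump event $v\rightsquigarrow w$---to compare $\pi_x$ to $\pi_y$ along a directed path in $S$. Your route is arguably cleaner in that it avoids the separate lower bound on the denominator; the paper's distance stratification makes the explicit form of the constant somewhat more transparent.

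Two minor gaps. First, your extension event (``first jump lands at the correct out-neighbor within time $t$'') does not ensure the particle's \emph{entire} trajectory stays in $S$, which is what $\xrightharpoonup{S}$ demands; you need ``exactly one jump, to $w$'', i.e., the paper's $v\rightsquigarrow w$. With that correction, the ``overlap'' worry you raise dissolves: both $\{\mathbf 0\xrightharpoonup{S}x\}$ and $\{v_i\rightsquigarrow v_{i+1}\}$ are increasing in the independent Bernoulli variables $Z_{(v,B)}$, so Harris--FKG gives $\pi_x\le\delta^{-j}\pi_y$ directly---no abelian property or decoupling is needed. Second, your bound $\sum_x\pi_x\,p_j^S(x,y)\le K^j\pi_y$ also implicitly uses $\sum_x p_j^S(x,y)\le(\Delta^-/\Delta)^j$, where $\Delta^-$ is the (constant, by vertex-transitivity) in-degree; this should be folded into $K$. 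Finally, your claim that $c\to0$ as $t\to0$ is not correct with your formula: since $K\asymp(\lambda t)^{-1}$ while $\bP(\mathrm{Poi}(t)\ge j)\asymp t^j/j!$, one gets $\sum_jK^j\bP(\mathrm{Poi}(t)\ge j)\to e^{C'(\lambda)}<\infty$, so $c$ has a positive limit at $t=0$ (for fixed $\lambda>0$). This is harmless for the lemma as stated, which only asks for continuity.
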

The proof of the above lemma is a consequence of  Lemmas \ref{lemma: exponential_bound_Nt}, and \ref{lemma: exponential_bound_Ar}, which we now state and prove in detail.

\begin{lemma} \label{lemma: exponential_bound_Nt}
Let $S \subset V$ be a finite set containing the origin. For $x \in S$ define $D_x := d_{\vec{G}}(x, S^c).$ We have that for any $t > 0$ and $x\in S:$
\begin{equation}
\bE_x\left[ N(t) \, \big\vert \, \tau_{S^c} \le t \right] \leq \Delta^{D_x} (t + D_x) \, .
\end{equation}
\end{lemma}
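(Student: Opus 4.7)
The plan is to exploit the independence of the discrete-time skeleton $(X(n))_{n \ge 0}$ and the Poisson jump counter $N(t)$, combined with a short identity for truncated Poisson means. Let $\nu_{S^c} := \inf\{n \ge 0 : X(n) \in S^c\}$, so that $\tau_{S^c}$ is the sum of $\nu_{S^c}$ i.i.d.\ $\mathrm{Exp}(1)$ holding times, $\{\tau_{S^c} \le t\} = \{N(t) \ge \nu_{S^c}\}$, and $N(t)$ is independent of $\nu_{S^c}$. Since any directed path from $x \in S$ to $S^c$ has length at least $D_x$, one has $\nu_{S^c} \ge D_x$ almost surely, so in particular $\bP_x(\nu_{S^c} \le n) = 0$ for $n < D_x$.

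I would then extract the main inequality via two complementary estimates. For the numerator, the trivial bound $\bP_x(\nu_{S^c} \le n) \le 1$ gives
\[
\bE_x[N(t)\, \1_{\tau_{S^c} \le t}] \;=\; \sum_{n \ge D_x} n\, \bP(N(t)=n)\, \bP_x(\nu_{S^c} \le n) \;\le\; \bE[N(t)\, \1_{N(t) \ge D_x}].
\]
For the denominator, the key estimate is $\bP_x(\nu_{S^c} \le D_x) \ge \Delta^{-D_x}$, obtained by fixing any directed geodesic $x = v_0, v_1, \ldots, v_{D_x}$ from $x$ to $S^c$: minimality of $D_x$ forces $v_0, \ldots, v_{D_x - 1} \in S$, and each of the $D_x$ prescribed steps coincides with the walk's uniform choice with probability $1/\Delta$. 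Hence $\bP_x(\nu_{S^c} \le n) \ge \Delta^{-D_x}$ for every $n \ge D_x$, so
\[
\bP_x(\tau_{S^c} \le t) \;=\; \sum_{n \ge D_x} \bP(N(t)=n)\, \bP_x(\nu_{S^c} \le n) \;\ge\; \Delta^{-D_x}\, \bP(N(t) \ge D_x).
\]
Dividing the two yields $\bE_x[N(t) \mid \tau_{S^c} \le t] \le \Delta^{D_x}\, \bE[N(t) \mid N(t) \ge D_x]$.

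The finishing step is a short Poisson calculation. Shifting the index of summation gives $\bE[N(t)\, \1_{N(t) \ge k}] = t\, \bP(N(t) \ge k-1)$ for every $k \ge 1$. Combining this with the elementary ratio $\bP(N(t) = D_x - 1) = (D_x/t)\,\bP(N(t) = D_x) \le (D_x/t)\,\bP(N(t) \ge D_x)$, one obtains
\[
\bE[N(t) \mid N(t) \ge D_x] \;=\; t + \frac{t\,\bP(N(t) = D_x - 1)}{\bP(N(t) \ge D_x)} \;\le\; t + D_x,
\]
which combined with the previous paragraph yields the claim.

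The main obstacle to anticipate is the temptation to replace $\bE_x[N(t)\, \1_{\tau_{S^c} \le t}]$ by the looser $\bE[N(t)] = t$; this leads to a bound of the form $t\Delta^{D_x}/\bP(N(t) \ge D_x)$, which is adequate when $D_x \lesssim t$ but catastrophically bad when $D_x \gg t$. The correct approach is to retain the truncation $\1_{N(t) \ge D_x}$ throughout and to extract the extra additive $D_x$ only at the very end, through the Poisson identity $\bE[N(t)\, \1_{N(t) \ge k}] = t\,\bP(N(t) \ge k-1)$.
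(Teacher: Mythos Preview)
Your proof is correct and follows essentially the same route as the paper: bound the numerator by $\bE[N(t)\,\1_{N(t)\ge D_x}]$, bound the denominator from below by $\Delta^{-D_x}\bP(N(t)\ge D_x)$ via a geodesic, and finish with the Poisson identity $\bE[N(t)\,\1_{N(t)\ge k}]=t\,\bP(N(t)\ge k-1)$ together with $\bP(N(t)=D_x-1)=(D_x/t)\bP(N(t)=D_x)$. The only cosmetic difference is that you make the independence of the skeleton and the Poisson clock explicit through $\nu_{S^c}$, whereas the paper passes directly to the bound $\bE_x[N(t)\mid\tau_{S^c}\le t]\le \bE_x[N(t)\,\1_{N(t)\ge D_x}]/\bP_x(\tau_{S^c}\le t)$.
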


\begin{proof}
First, observe that:
\begin{align*}
    \bE_x\left[ N(t) \, \big\vert \, \tau_{S^c} \le t \right] &\le \frac{\bE_x\left[ N(t) \, \1_{N(t) \geq D_x} \right]}{\bP_x(\tau_{S^c} \le t)}.
\end{align*}
We proceed by upper-bounding the numerator and lower-bounding the denominator. For the numerator, using $\bP_{x}(N(t)=k) = t^{k}e^{-t}/k!,$ we have that
\begin{align*}
    \bE_x\left[ N(t) \, \1_{N(t) \geq D_x} \right] &= \sum_{k = D_x}^{+\infty} k \, \bP_x(N(t) = k) \\
    &= t \sum_{k = D_x-1}^{+\infty} \bP_{x}(N(t) = k) = t \, \bP_x(N(t) \ge D_x - 1) \, .
\end{align*}
To lower bound the denominator, we fix a shortest path $\gamma = (v_0, v_1, \ldots, v_{D_x})$ from $v_0 := x$ to $v_{D_x} \in S^c$ with $\{ v_i, v_{i+1} \} \in \vec{E}$. Then
\begin{align*}
    \bP_x(\tau_{S^c} \le t) \ge \bP_x(N(t) \ge D_x, \text{the first $D_x$ jumps are along $\gamma$}) = \Delta^{-D_x} \, \bP_{x}(N(t) \ge D_x) \, .
\end{align*}
Therefore, we have 
\begin{align*}
    \bE_x\left[ N(t) \, \big\vert \, \tau_{S^c} \le t \right] &\le \Delta^{D_x} t \cdot \frac{\bP_{x}(N(t) \ge D_x - 1)}{\bP_{x}(N(t) \ge D_x)} \\
    &\le \Delta^{D_x} t \cdot  \bigg( 1 + \frac{\bP_{x}(N(t) = D_x - 1)}{\bP_{x}( N(t) = D_x)} \bigg) = \Delta^{D_x} t \cdot \bigg(1 + \frac{D_x}{t}\bigg). \qedhere
\end{align*}
\end{proof}

For $r \geq 1$, let $S_r := \{x \in S : d_{\vec{G}}(x, S^c)=r \}$ and $A_r := \{ x \in S_r: \boldsymbol{0} \xrightharpoonup{S} x \}$. Before we move on to the next result, we explain why the Harris-FKG inequality is applicable in our setup.  

Fix some particle density and lifespan $\lambda,t \in (0,\infty)$. For $v \in V$ and a finite $B \subset V$ containing $v$, let $Y_{(v,B)}=Y_{(v,B)}(t)$ be the number of particles whose initial location is $v$ whose trace (for the lifespan $t$) is $B$. Let $Z_{(v,B)}=\mathbbm{1}_{Y_{(v,B)}(t) \ge 1}$. The probability that there is a particle whose trace  is infinite is 0, and we may construct the probability space such that this event is the empty set. When this is done, all events considered below concerning the frog model with parameters $(\lambda,t)$ are measurable with respect to \ $\sigma(Z_{(v,B)}: v \in B \subset V, \,  |B| < \infty)$. By Poisson thinning, the $Y$'s are independent Poisson random variables, and thus the $Z$'s are independent Bernoulli random variables. Below, and throughout the paper, we apply Harris-FKG with respect to the $Z$'s.

\begin{lemma} \label{lemma: exponential_bound_Ar}
There exists $K = K(\Delta, \lambda, t)$ such that for all finite $S$ containing the origin, we have
$$\bE_{\lambda, t}\big[|A_r|\big] \le K^{r-1} \bE_{\lambda, t}\big[|A_1|\big] \, .$$
\end{lemma}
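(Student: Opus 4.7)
The plan is to proceed by induction on $r\ge 1$; the base case $r=1$ is immediate since $K^{0}=1$. For the inductive step $r\ge 2$, for each $x\in S_{r}$ I would fix a shortest directed path $x=v_{0}\to v_{1}\to\cdots\to v_{r}\in S^{c}$ in $\vec{G}$ and set $y_{x}:=v_{1}$. No shorter directed path from $y_{x}$ to $S^{c}$ can exist (else it would shorten the geodesic from $x$), so $y_{x}\in S_{r-1}$ and $x\to y_{x}$ is a directed edge.

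The key step compares $\bP_{\lambda,t}(\mathbf{0}\xrightharpoonup{S}x)$ to $\bP_{\lambda,t}(\mathbf{0}\xrightharpoonup{S}y_{x})$ through the events
\begin{equation*}
A_{x}:=\{\mathbf{0}\xrightharpoonup{S}x\},\qquad B_{x}:=\{\exists\, 1\le i\le\eta_{x}:\ y_{x}\in \mathrm{R}_{x}^{i}\subseteq S\}.
\end{equation*}
On $A_{x}\cap B_{x}$, any chain realizing $A_{x}$ can be extended by the single step $x\xRightarrow{S}y_{x}$ supplied by the $B_{x}$-witness, so $A_{x}\cap B_{x}\subseteq \{\mathbf{0}\xrightharpoonup{S}y_{x}\}$. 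For $x\neq\mathbf{0}$, any minimum-length activation chain to $x$ has no repeated vertices, hence does not use any particle at $x$; consequently $A_{x}$ is measurable with respect to particles at vertices $v\neq x$, while $B_{x}$ depends only on particles at $x$. By Poisson thinning (which renders families of particles at distinct vertices independent) $A_{x}$ and $B_{x}$ are independent, yielding
\begin{equation*}
\bP_{\lambda,t}(\mathbf{0}\xrightharpoonup{S}y_{x})\ge \bP_{\lambda,t}(A_{x})\,\bP_{\lambda,t}(B_{x})\ge q\cdot\bP_{\lambda,t}(\mathbf{0}\xrightharpoonup{S}x),
\end{equation*}
where $q:=1-\exp(-\lambda t e^{-t}/\Delta)$ arises from the bound $\bP_{x}(y_{x}\in \mathrm{R},\ \mathrm{R}\subseteq S)\ge \bP(N(t)=1)/\Delta=te^{-t}/\Delta$ applied through the thinned Poisson count of particles at $x$ with that trace property.

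Summing $\bP_{\lambda,t}(\mathbf{0}\xrightharpoonup{S}x)\le q^{-1}\bP_{\lambda,t}(\mathbf{0}\xrightharpoonup{S}y_{x})$ over $x\in S_{r}\setminus\{\mathbf{0}\}$ and swapping the order, each $y\in S_{r-1}$ is counted at most $\Delta$ times, since the corresponding $x$'s must be in-neighbors of $y$ and vertex-transitivity forces in-degree $\Delta$. Hence
\begin{equation*}
\sum_{x\in S_{r}\setminus\{\mathbf{0}\}}\bP_{\lambda,t}(\mathbf{0}\xrightharpoonup{S}y_{x})\le\Delta\,\bE_{\lambda,t}[|A_{r-1}|].
\end{equation*}
If $\mathbf{0}\in S_{r}$, applying the same construction to $x=\mathbf{0}$ (where $A_{\mathbf{0}}$ is trivially true) already gives $\bE_{\lambda,t}[|A_{r-1}|]\ge\bP_{\lambda,t}(\mathbf{0}\xrightharpoonup{S}y_{\mathbf{0}})\ge q$, which absorbs the residual term $\mathbf{1}_{\mathbf{0}\in S_{r}}$ into $q^{-1}\bE_{\lambda,t}[|A_{r-1}|]$. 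Combining, $\bE_{\lambda,t}[|A_{r}|]\le K\,\bE_{\lambda,t}[|A_{r-1}|]$ with $K:=(\Delta+1)/q$, and iteration completes the induction.

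The main technical obstacle will be justifying the independence of $A_{x}$ and $B_{x}$: the minimal-chain observation that $\mathbf{0}\xrightharpoonup{S}x$ admits a witnessing chain not using any particle at $x$ is essential, and only then does Poisson thinning decouple the two events across vertices. Once this decoupling is in place, the remaining ingredients --- the geodesic-based choice of $y_{x}$, counting in-neighbors, and the elementary estimate on $q$ --- are routine; the boundary case $\mathbf{0}\in S_{r}$ is minor but should be treated explicitly.
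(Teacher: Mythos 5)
Your argument is correct and is structurally the same as the paper's: both compare the deeper shells $S_r$ to $S_1$ by extending an internal activation chain through single-jump frogs along a geodesic towards $S^c$, and the constant $q = 1-\exp(-\lambda t e^{-t}/\Delta)$ you introduce is exactly the paper's $\delta$. The two differences are organizational rather than conceptual. First, you induct one shell at a time ($S_r \to S_{r-1}$), whereas the paper makes the comparison $S_r \to S_1$ in a single application, picking the full $(r-1)$-step chain at once. Second, you establish the inequality $\bP(A_x\cap B_x) = \bP(A_x)\bP(B_x)$ via independence, using the observation that a minimal activation chain to $x$ never uses particles stationed at $x$, and then Poisson thinning. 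This is correct, but it is an overkill: both $A_x = \{\mathbf{0}\xrightharpoonup{S}x\}$ and $B_x$ are increasing events in the independent Bernoulli indicators $Z_{(v,B)}$, so Harris--FKG gives $\bP(A_x\cap B_x)\ge \bP(A_x)\bP(B_x)$ directly, which is exactly what the paper does. Using FKG also spares you the separate treatment of $x=\mathbf{0}$: since $\bP(\mathbf{0}\xrightharpoonup{S}\mathbf{0})=1$, FKG handles it for free and gives the constant $\Delta/\delta$ rather than your $(\Delta+1)/q$. One point to be careful about, which you share with the paper: when you re-sum and claim each $y\in S_{r-1}$ is hit at most $\Delta$ times, you are bounding the in-degree by $\Delta$; your assertion that "vertex-transitivity forces in-degree $\Delta$" is not true for directed vertex-transitive graphs in general (e.g., orienting a regular tree away from a fixed end gives a transitive digraph with in-degree 1 and out-degree 2). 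The in-degree is still a constant of the graph and can simply be absorbed into $K$, but the phrasing should be corrected.
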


\begin{proof}
   We define for $x,y \in V$ and $d_{\vec{G}}(x,y)=1$, the event $x \rightsquigarrow y$  that there exists a particle initialized on $x$ that makes exactly one jump within time $t$, and this jump is from $x$ to $y$.  
    By definition, we have 
    $$\bP(x\rightsquigarrow y) = 1 - e^{-\frac{\lambda}{\Delta}\frac{t}{e^{t}}} \, , $$
    and we denote the quantity on the right-hand side as $\delta.$ For any $u \in S_1$, let $\mathcal{D}_r(u) := \{ x \in S_{r} : d_{\vec{G}}(u, x) = r-1 \}$. 
    For any $u \in S_1$ and any $x \in \mathcal{D}_r(u)$, there exists a path $u=v_{1}, v_{2}, \ldots, v_{r}=x$ such that $d_{\vec{G}}(v_{\ell},v_{\ell+1})=1$ for all $\ell \in \{1, \ldots, r-1 \}$. Hence, we have by the Harris-FKG inequality that:
    \begin{align*}
          \bP_{\lambda,t}(\mathbf{0}\xrightharpoonup{S}u) \geq \bP_{\lambda,t}(\mathbf{0}\xrightharpoonup{S}v_{1}, v_{1} \rightsquigarrow v_{2}, \ldots, v_{r-1} \rightsquigarrow v_{r}) \geq \bP_{\lambda,t}(\mathbf{0}\xrightharpoonup{S}x)\delta^{r-1}.
     \end{align*}
     Therefore, 
     \begin{align*}
          &\sum_{x \in S_{r}} \bP_{\lambda,t}(\mathbf{0}\xrightharpoonup{S}x)\delta^{r-1} 
          \leq \sum_{u \in S_1} \sum_{x \in \mathcal{D}_r(u)} \bP_{\lambda,t}(\mathbf{0}\xrightharpoonup{S} x)\delta^{r-1} \\
          \leq \sum_{u \in S_1} \sum_{x \in \mathcal{D}_r(u)} &\bP_{\lambda,t}(\mathbf{0}\xrightharpoonup{S} u) \leq \sum_{u \in S_1} \bP_{\lambda,t}(\mathbf{0}\xrightharpoonup{S} u) |\mathcal{D}_r(u)| \leq \sum_{u \in S_{1}} \bP_{\lambda,t}(\mathbf{0}\xrightharpoonup{S} u)\Delta^{r-1} \, .
     \end{align*}
     Taking $K := \Delta/\delta \leq \max \Big\{ 4\Delta, 2\frac{\Delta^{2}e^{t}}{\lambda t}\Big\}$ completes the proof.  
\end{proof}

We can now turn ourselves back to the proof of Lemma \ref{constantcomparison}.

\begin{proof}[Proof of Lemma \ref{constantcomparison}]
    Consider the ratio
    \begin{align*}
        \frac{\tilde{\phi}_{\lambda, t}(S)}{\phi_{\lambda, t}(S)} = \frac{\sum_{x \in S} \bP_{\lambda, t}\left( \boldsymbol{0} \xrightharpoonup{S} x \right) \bP_x(\tau_{S^c} \le t) \bE_x\left[ N(t) \, \big\vert \, \tau_{S^c} \le t \right]}{\sum_{x \in S} \bP_{\lambda, t}\left( \boldsymbol{0} \xrightharpoonup{S} x \right) \bP_x(\tau_{S^c} \le t)} \, .
    \end{align*}
    It follows from Lemma \ref{lemma: exponential_bound_Nt} and Lemma \ref{lemma: exponential_bound_Ar} that the numerator is upper bounded by 
    \begin{align*}
        \sum_{r=1}^{\infty} \bE_{\lambda, t}\left[ |A_r| \right] \, \bP_x(\tau_{S^c} \le t) \, \Delta^{r} (t + r) \le \bE_{\lambda, t}\left[ |A_1| \right] \sum_{r=1}^{\infty} K^{r-1} \, \bP(\text{Po}(t) \ge r) \, \Delta^{r} (t + r) \, .
    \end{align*}
    On the other hand, the denominator is lower bounded by
    \begin{align*}
        \sum_{x \in S_1} \bP_{\lambda, t}\left( \boldsymbol{0} \xrightharpoonup{S} x \right) \bP_x(\tau_{S^c} \le t) \ge \frac{1-e^{-t}}{\Delta} \, \bE_{\lambda, t}[|A_1|] \, .
    \end{align*}
    We finally derive that 
    
    \begin{align*}
    \frac{\tilde{\phi}_{\lambda, t}(S)}{\phi_{\lambda, t}(S)}  &\leq \frac{ \Delta}{1-e^{-t}} \sum_{r=1}^{\infty} K^{r-1} \bP(\text{Po}(t)\geq r) \Delta^{r} (t + r)\\ 
    & < \frac{ \Delta}{1-e^{-t}} \cdot 2(t+1)^{2}\cdot K^{t} \Delta^{t+1} + \frac{ \Delta}{1-e^{-t}} \sum_{r=\lfloor t+1 \rfloor }^{\infty} K^{r-1} e^{r-t} \bigg(\frac{t}{r}\bigg)^{r}  \Delta^{r}2r \\
     & < \frac{  2(t+1)^{2}\Delta}{1-e^{-t}}\cdot \bigg(4\Delta^{2}+2\frac{\Delta^{3}e^{t}}{\lambda t}\bigg)^{t+1}  + \frac{ \Delta^{2} 2t}{1-e^{-t}} \sum_{r=\lfloor t+1\rfloor }^{\infty} \bigg(4\Delta^{2}+ 2\frac{\Delta^{3}e^{t}}{\lambda t}\bigg)^{r-1} e^{r-t} \bigg(\frac{t}{r}\bigg)^{r-1}  \\ 
     &=: \mathrm{C}(\Delta,\lambda,t) \, ,
    \end{align*} 
and conclude the proof by observing that since the above sum converges absolutely for any $\lambda,t>0$ we have that $c(\Delta,\lambda,t)$ is continuous on $(0,\infty)\times (0,\infty),$ and by taking limits we can observe that $c(\Delta,0,t)=c(\Delta,\lambda,0)=0$ for any $\lambda,t \geq 0$. 
\end{proof}

We now begin the proof of Item (3) of Theorem \ref{mainsharpness}. 
Fix $t >0$ and let $\lambda < \tilde{\lambda}_c(t)$. By the observation above and the monotonicity of $\widetilde\phi_{\lambda,t}$ with respect to $\lambda,$ there exists a finite set $S \subset V$ such that $\widetilde\phi_{\lambda, t}(S)< 1$. We now define an exploration process which will later be shown to stochastically dominate the frog model. This exploration process will have expected number of children less than $\widetilde\phi_{\lambda,t}(S),$ thus, being subcritical. Define for every $x \in V$ a graph isomorphism $\Psi_{x}: V \rightarrow V$ such that $\Psi_x(\mathbf{0}) = x$, which exists due to the transitivity of $\vec{G}$.

\textbf{Definition of the exploration process:} We define the following exploration process starting at the origin, denoted by $\text{EP}(\mathbf{0})$. For each generation $i\geq 0$ of the exploration process, we let $\mathcal{Z}_{i}$ denote the total number of ``children" belonging to generation $i$. At the beginning $\mathcal{Z}_{0} = 1$ and the original ``parent" is $\mathbf{0}.$ We let $\mathcal{A}$ denote the set of active vertices, that is, vertices that will have their particles' trajectories fully explored by the end of the current iteration, and $\mathcal{V}$ be the set of revealed vertices, that is, the vertices whose particles' trajectories were already fully explored in the current iteration. We define them iteratively in the following way. Initially $\mathcal{A}_{0}:= \mathbf{0}$ and $\mathcal{V}_{0} = \varnothing$. Now, for the vertices in $\mathcal{A},$ in this case $\mathbf{0},$ label each of the $\text{Po}(\lambda)$ particles at the origin in arbitrary order $\{\omega^{1}_{\mathbf{0}},..., \omega_{\mathbf{0}}^{\eta_{\mathbf{0}}}\}$. According to this order, we reveal the trajectory of the random walks. Recall that $A_{S}(x) = \{ 1 \le i \le \eta_x : \mathrm{R}^i_x \cap S^c = \varnothing \}$ and $\mathcal{R}^S_x$ is the union of particle trajectories in $A_{S}(x)$ (see \eqref{def: modified_arrow}). For every particle $i \in A^{c}_{S}(\mathbf{0})$ we count $\cup_{i \in (A_{S}(\mathbf{0}))^c} \mathrm{R}^{i}_{\mathbf{0}}\setminus \{\mathbf{0}\}$ as children of $\mathbf{0}$. Moreover, for every particle $i \in A_{S}(\mathbf{0}),$ we will explore the vertices along their trajectories. That is, we now define $\mathcal{A}_{1}:= \mathcal{R}_{\mathbf{0}}^{S} \setminus\{\mathbf{0}\},$ and $\mathcal{V}_{1} : = \mathbf{0}.$ We repeat the same procedure as above: for every $x \in \mathcal{A}_1 = \mathcal{R}_{\mathbf{0} }^{S}\setminus\{\mathbf{0}\}$, add $\cup_{i \in A^{c}_{S}(x)}\mathrm{R}_{x}^{i} \setminus \{x\}$ to the children of $\mathbf{0}$, and further add $\mathcal{R}_{x}^{S} \setminus \{x\}$ to the set of active vertices $\mathcal{A}_{2}$. Observe that since the vertices in $\mathcal{A}$ all belong to $S,$ this exploration eventually terminates. This determines the end of the iteration starting at $\mathbf{0},$ and thus the end of the exploration of the children belonging to the first generation. See Figure \ref{firststep} for an illustration of it.

Let us now analyze the expected size of the children of $\mathbf{0}$. Define $\mathcal{K}(S):= \{ x \in S: \mathbf{0} \xrightharpoonup{S} x\}$. We now observe that the number of children belonging to the first generation is given by  

$$ \mathcal{Z}_{1}=  \Bigg\vert \bigcup_{x \in \mathcal{K}(S)} \, \bigcup_{i \in A^{c}_{S}(x)} \, \mathrm{R}^{i}_x \setminus \{ x \} \Bigg \vert \, .$$
We observe that $\bE_{\lambda,t}[\mathcal{Z}_{1}]$ can be further upper bounded bounded by

\begin{equation}\label{expectationbound}
\begin{split}
    \bE_{\lambda, t} \left[ \mathcal{Z}_{1} \right] 
    &\le \bE_{\lambda, t}\left[ \sum_{x\in \mathcal{K}(S)} \sum_{i \in A_{S}^{c}(x)} \left( |\mathrm{R}^{i}_x| - 1 \right) \right] \\
    &= \sum_{x \in S} \bE_{\lambda, t} \left[ \1_{ \left\{ \boldsymbol{0} \xrightharpoonup{S} x \right\} } \sum_{i = 1}^{\eta_x} \1_{ \left\{ \tau_{S^{c}}^{(i)}\leq t \right\} } \left( |\mathrm{R}^{i}_x| - 1 \right) \right] \\
    &= \sum_{x \in S} \bP_{\lambda, t} \left( \boldsymbol{0} \xrightharpoonup{S} x  \right) \bE_{\lambda, t} \left[ \sum_{i = 1}^{\eta_x} \1_{ \left\{ \tau_{S^{c}}^{(i)}\leq t \right\} } \left( |\mathrm{R}^{i}_x| - 1 \right) \right]\\
    &= \sum_{x \in S} \bP_{\lambda, t} \left( \boldsymbol{0} \xrightharpoonup{S} x \right) \lambda \bE_{x}\left[\1_{ \left\{ \tau_{S^{c}}\leq t \right\} } \left( |\mathrm{R}_{x}| - 1\right) \right],
\end{split}
\end{equation}
where the second equality uses the independence between $\{ \boldsymbol{0} \xrightharpoonup{S} x \} $ and $\sum_{i = 1}^{\eta_x} \1_{ \{ \tau_{S^{c}}^{(i)}\leq t \} } ( |\mathrm{R}^{i}_x| - 1 )$, and the last identity follows from an application of Wald's equation. From \eqref{expectationbound}, since we have $$\bE_{x}\left[\1_{ \left\{ \tau_{S^{c}}\leq t \right\} } \left( |\mathrm{R}_{x}| - 1\right) \right]\le  \bP_x(\tau_{S^c} \le t) \bE_x\left[ N(t) \, \big\vert \, \tau_{S^c} \le t \right],$$ 
we conclude that $\bE_{\lambda, t} \left[ \mathcal{Z}_{1} \right] \le \widetilde{\phi}_{\lambda, t}(S)<1$.

\begin{figure}
\centering
\begin{tikzpicture} 
\coordinate (a1) at (0,0);
\coordinate (a2) at (3.3,-0.7);
\coordinate (a3) at (4.5,0.2);
\coordinate (a4) at (3,1.5);
\coordinate (a5) at (3,3);
\coordinate (a6) at (2.4,3.2);
\coordinate (a7) at (1,2);

\filldraw[black] (2,1) circle (2pt) node[anchor=west]{$0$};

\filldraw[blue] (2.45,0) circle (2pt) node[anchor=south west]{$w_1$};
\filldraw[red, fill=white] (-0.3,1.2) circle (2pt) node[anchor=east]{$v_{1}$};

\filldraw[red,fill= white] (1.3,1.2) circle (2pt) node[anchor=south  ]{$v_{3}$};

\filldraw[red,fill= white] (0.5,0.9) circle (2pt) node[anchor=north ]{$v_{2}$};

\filldraw[color=green, fill=white] (2.5,-1.08) circle (2pt) node[anchor=north east]{$v_{4}$};

\filldraw[black, thick] (3.2,3) node[anchor=west]{$S$};

\filldraw[blue, fill=blue] (3.5,-0.4) circle (2pt) node[anchor=south west]{$w_2$};;

\draw[red,thick]  (2,1)-- (1.3,1.2)-- (0.5,0.9) -- (-0.3,1.2);

\draw[blue,thick]  (2,1) -- (2.45,0);

\draw[blue,thick]  (2.45,0) -- (3.5,-0.4);

\draw[green, thick] (2.45,0) -- (2.5,-1.08);

\draw[use Hobby shortcut,thick] ([out angle=-90]a1)..(a2)..([in angle=-90]a3); 
\draw[use Hobby shortcut,thick] ([out angle=90]a3)..(a4)..(a5)..(a6)..(a7)..([in angle=90]a1);
\end{tikzpicture}
\caption{In this example, we consider an exploration of $\mathcal{Z}_{1}$. We start with two active particles at the origin, $\omega_{\mathbf{0}}^{1}$ (red) and $\omega_{\mathbf{0}}^{2}$ (blue). We first follow the trajectory of $\omega_{\mathbf{0}}^{1}$ until it exits the set $S$ for the first time or is deactivated. Since $\omega_{\mathbf{0}}^{1}$ left $S$, within its lifespan, we do not explore any of its vertices, and include $v_1, v_2,$ and $v_3$ as children of $\mathbf{0}$. Now, we follow the trajectory of the particle $\omega_{\mathbf{0}}^{2}$, which didn't exit the set $S$ within its lifespan. The vertices in its trajectory were labeled as $w_1$ and $w_2$ for convenience. The vertex $w_1$ has one particle, say $\omega_{w_1}^{1},$ which immediately left the set at the vertex $v_{4}$ represented by the green trajectory. Since the green trajectory exits $S$, we count $v_{4}$ as the fourth child of $\mathbf{0}$. Since $w_2$ has no particles, the exploration of $\mathbf{0}$ terminates. In this realization of the branching process, $\mathbf{0}$ has four children, so $\mathcal{Z}_{1}=4$. }\label{firststep}
\end{figure}
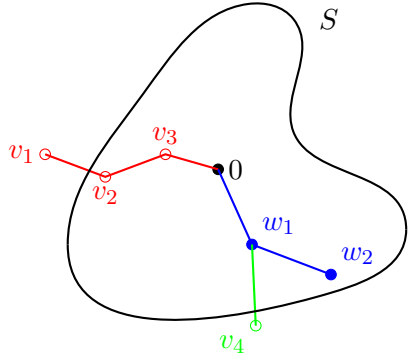

Now, for the exploration of $\mathcal{Z}_{2}$, we consider the vertices in $   \bigcup_{v \in \mathcal{K}(S)} \, \bigcup_{i \in A^{c}_{S}(v)} \, \mathrm{R}^{i}_v \setminus \{ v \},$ that is, the children belonging to the first generation. Arbitrarily order them, label the first one $v_1$, and let  $\Psi_{v_1}(S)$ be the ``translation" of $S$ at $v_{1}$. Now, for each vertex that had its trajectories revealed (that is, vertices belonging to $\mathcal{V}$ at the end of the previous iteration), we refresh them independently with a new $\text{Po}(\lambda)$ number of particles. For each particle, we independently sample a new continuous-time simple random walk trajectory with lifespan $t.$ We consider the iteration of the process starting at $v_{1}$ for the generation one, as the repetition of the previous iteration at $\mathbf{0}$, with $\mathcal{A}:=v_{1},$ $\mathcal{V} = \varnothing,$  and $\Psi_{v_1}(S)$ instead of $S$. It is clear that the expected number of children of $v_{1}$ is the same as the expected number of children of $\mathbf{0},$ which is less than one. See Figure \ref{seconstep} for an illustration of this exploration starting at $v_{1}$. 

This procedure can be iterated for the remaining children belonging to the first generation and later iterated for the remaining generations. This defines a subcritical branching process: the distribution of the number of children of each parent in each generation is always the same,  independent of other copies, and has an expected value strictly less than one. We now define the following coupling between the frog model and $\text{EP}(\mathbf{0})$
to show that the number of particles activated by the frog model $\bigcup_{x: \mathbf{0}\rightarrow x}\mathcal{P}(x),$ is dominated by the number of particles explored by $\text{EP}(\mathbf{0})$

 In the frog model, we have that the number of particles at each $x\in V$ is distributed as $\eta_x \sim \text{Po}(\lambda).$ For the exploration process, at the beginning, we define $\eta^{\text{EP(\textbf{0})}}_{x} := \eta_x,$ for each $x \in V$. Furthermore, for each $x \in V$ in the frog model, the particles $(\omega^{i}_{x})_{1\leq i \leq \eta_{x}}$ perform continuous-time simple random walks with lifespan $t.$ We also require that the particles of the exploration process perform continuous-time simple random walks according to the ones sampled for the frog model, that is, $(\omega^{i}_{x})^{\text{EP(\textbf{0})}}_{1\leq i \leq \eta^{\text{EP(\textbf{0})}}_{x}}:= (\omega^{i}_{x})_{1\leq i \leq \eta_{x}}.$

Suppose that the process has not yet terminated (otherwise we are done) and let $v_{1},...,v_{\ell}$ denote the children belonging to the first generation. For $v_{j}$ with $1 \leq j \leq \ell$, we consider the following. For sites $y \in V$ that were previously explored at the iteration starting at $v_{j-1}$ (for $v_{1}$ we consider the vertices that were explored at the generation zero, starting at $\mathbf{0}$), we refresh their number of particles with new independent $\text{Po}(\lambda)$ denoted by $\eta_{y}^{\prime}$. We also sample completely new and independent random walks for each particle $ (\omega^{i}_{y})_{1\leq i \leq \eta_{y}^{\prime}}^{\prime}.$ For every unexplored vertex $y$, we do not alter them and their respective particles' random walks.  We further iterate this procedure for the subsequent generations until the process terminates or continues indefinitely. Given this coupling, observe that in order to prove that $|\mathcal{C}_{\mathbf{0}}|$ is stochastically dominated by $|S|\sum_{i=0}^{\infty}\mathcal{Z}_{i}$, we need to prove that if $\mathbf{0}\rightarrow x$ in the frog model, then $x$ is explored by $\text{EP}(\mathbf{0})$.

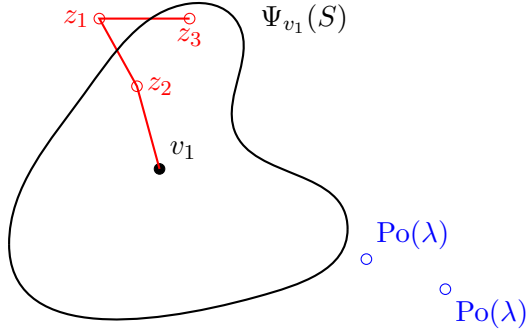
\begin{figure}[t!]
\centering
\begin{tikzpicture} 
\coordinate (a1) at (0,0);
\coordinate (a2) at (3.3,-0.7);
\coordinate (a3) at (4.5,0.2);
\coordinate (a4) at (3,1.5);
\coordinate (a5) at (3,3);
\coordinate (a6) at (2.4,3.2);
\coordinate (a7) at (1,2);

\filldraw[red, fill=white] (1.2,3) circle (2pt) node[anchor=east]{$z_{1}$};

\filldraw[red, fill=white] (2.4,3) circle (2pt) node[anchor=north]{$z_{3}$};

\filldraw[black, thick] (3.2,3) node[anchor=west]{$\Psi_{v_{1}}(S)$};

\filldraw[black] (2,1) circle (2pt) node[anchor=south west]{$v_1$};

\filldraw[blue, fill=white] (4.75,-0.2) circle (2pt) node[anchor=south west]{$\text{Po}(\lambda)$};

\filldraw[blue, fill=white] (5.8,-0.6) circle (2pt) node[anchor=north west]{$\text{Po}(\lambda)$};

\filldraw[red, fill=white] (1.7,2.1) circle (2pt) node[anchor= west]{$z_{2}$};

\draw[red,thick]  (2,1)   -- (1.7,2.1) -- (1.2,3)--(2.4,3) ;

\draw[use Hobby shortcut,thick] ([out angle=-90]a1)..(a2)..([in angle=-90]a3); 
\draw[use Hobby shortcut,thick] ([out angle=90]a3)..(a4)..(a5)..(a6)..(a7)..([in angle=90]a1);

\end{tikzpicture}
\caption{We now illustrate the iteration starting at $v_1$, which was one of the vertices counted as a child in the first generation. We shifted the set $S$ to $v_{1}$ and denoted that as $\Psi_{v_{1}}(S)$. Vertices that had their particles revealed at the previous iteration will be refreshed with $\text{Po}(\lambda)$ particles performing i.i.d. continuous time simple random walks, described in the drawing by the empty blue circles. In the example above, $v_{1}$ only had one child that left the set $S$. In this case, $z_{1},z_{2}$ and $z_{3}$ are counted as the children of $v_{1}$. The total size of $\mathcal{Z}_{2}$ will be the sum of the children of $v_{1},v_{2},v_{3}$ and $v_{4},$ each one of them having the same independent distributions with mean less than one. }\label{seconstep}
\end{figure}

\textbf{Proof of stochastic domination:} For the event $\mathbf{0} \rightarrow x$ to occur in the original frog model, there must exist a sequence of vertices $\{ x_{0},...,x_{k}\},$ with $x_{0} = \mathbf{0},$ and $x_{k} =x,$ such that for every $0 \leq j \leq k$ there exists a particle originally from $x_{j}$ that goes to $x_{j+1}$,  previously denoted as $x_{0} \Rightarrow x_{1} \Rightarrow ... \Rightarrow x_{k-1} \Rightarrow x_{k}$. We consider the following two cases. Either we have that $\mathbf{0} \xrightharpoonup{S} x,$ and then $x$ will be explored by the end of generation zero, concluding the claim. Otherwise, let $1\leq j\leq k$ be the first $j$ such that we have $\mathbf{0} \xrightharpoonup{S} x_{j-1},$ but we do not have that $\mathbf{0}  \xrightharpoonup{S} x_{j}.$ In this case, by the definition of the process, we have that $x_{j}$ will be one of the children belonging to the first generation. Now we consider at the first generation of $\text{EP}(\textbf{0}),$ the exploration process starting at $x_{j}$ and centered on $\Psi_{x_{j}}(S)$. It might be the case that the particles in $x_{j}$ have been previously explored by one of the other children belonging to the first generation. However, as previously observed, the frog model is an abelian process, so the order in which we do the exploration does not matter. In any case, it only matters that when we begin the exploration at $x_{j}$, it is guaranteed that $x_{j}$ already had or will have its (original from the frog model) particles' trajectories explored, and thus, one of them will reach $x_{j + 1},$ since $x_{j} \Rightarrow x_{j+1}$. Therefore, by the end  of the first generation of the exploration process, we reveal at least one more vertex along the sequence $\{ x_{0},...,x_{k}\}$. This clearly can be further iterated, and hence the claim follows as $x_{k}$ is explored at the latest in the $(k+1)-$th generation of $\text{EP}(\mathbf{0})$. We leave the remaining details to the reader.

\begin{proof}[Proof of items (3) and (4) of Theorem \ref{mainsharpness}:]
We first prove item (3). Fix an arbitrary $t > 0$ and let $\lambda < \tilde{\lambda}_c(t)$.
Then, the exploration process $\text{EP}(\mathbf{0})$ can be described as a subcritical branching process, with the expected number of children of a single parent being at most $\tilde \phi_{\lambda,t}(S) < 1$. We denote the size of the $n$-th generation by $\mathcal{Z}_{n}$. 
Note that $\mathcal{Z}_1$ is stochastically dominated by $\sum_{x\in S} \sum_{i = 1}^{\eta_{x}} (\lvert \mathrm{R}^{i}_{x} \rvert {-} 1)$, whose probability generating function $f(s)$ satisfies that $f(s) \le e^{|S|\lambda(e^{t(s-1)}-1)}$ for all $s > 0$. Then, we conclude using Exercise 5.33 of \cite{MR3616205} that there exists some $\varepsilon$ such that $\bE_{\mathrm{EP(\mathbf{0})}}[(1+\varepsilon)^{\sum_{i} \mathcal{Z}_i}] < \infty$. Hence, due to the stochastic domination established above together with the Markov's inequality, we have
\begin{equation}\label{eq: exp_decay}
\bP_{\lambda,t}( |\mathcal{C}_{0}| \geq n) \le \bP_{\text{EP}(\mathbf{0})}\bigg(\sum_{i=0}^{\infty} \mathcal{Z}_{i} \geq \frac{n}{|S|}\bigg)\leq \frac{\bE_{\text{EP}(\mathbf{0})} \left[(1+\varepsilon)^{\sum_{i\geq0}\mathcal{Z}_{i}} \right] }{(1+\varepsilon)^{n/|S|}} \, ,
\end{equation}
proving item $(3)$ of Theorem \ref{mainsharpness}.




We now prove item (4) of Theorem \ref{mainsharpness}. Note that since, as previously commented, $\phi_{\lambda, t}(S)$ might not be monotone with respect to $t$, we overcome this by slightly modifying the argument.
Fix $\lambda>0$. Then, for any $t$ such that $t < \tilde{t}_c(\lambda)$, by the definition of $\tilde{t}_c(\lambda)$, there exists some $t_0 \in [t, \tilde{t}_c(\lambda)]$ such that $\phi_{\lambda, t_0}(S) \leq \mathrm{c}(\Delta,\lambda,t_0)$ for some finite set $S(t_{0})$. Analogously to the proof above, this implies that there exist constants $C = C(t_0)$ and $c = c(t_0)$ such that
\begin{equation*}
    \bP_{\lambda,t}\left(|\mathcal{C}_{\mathbf{0}}| \geq n\right) \leq \bP_{\lambda,t_0}\left(|\mathcal{C}_{\mathbf{0}}| \geq n\right) \leq Ce^{-cn}, \text{ for every $n\geq0$},
\end{equation*}
which concludes the proof of item (4) of Theorem \ref{mainsharpness}.
\end{proof}

This concludes the sharpness of the phrase transition for directed vertex-transitive graphs. We now go back to the undirected graph setup for the proofs of the existence of the phase transition.

\section{Existence of the Phase Transition} \label{sec: existence}

The main objective of this section is to show that $t_{\mathrm{c}}(\lambda) < \infty$ holds for all $\lambda > 0$ on two types of graphs, namely quasi-transitive graphs of superlinear polynomial growth and non-amenable graphs.
In each of the following two subsections, we prove the desired statements through widely different strategies, that exemplify the graphs distinct behaviors. 

\subsection{Quasi-Transitive Graphs of Superlinear Polynomial Growth}

In this subsection, we show for quasi-transitive graphs of superlinear polynomial growth, that  $t_{\mathrm{c}}(\lambda) < \infty$ for all $\lambda >0.$
Recall that $g(n) := |B_{\boldsymbol{0}}(n)|$ and that $G$ is of polynomial growth if there exists a constant $C >0$ and $d \geq 1$ such that for all $n \geq 1$, we have $g(n) \leq C n^d$. To begin our discussion, we also need the definition of quasi-isometry:
\begin{definition}
    Let $(X_{1},d_{1})$ and $(X_{2},d_{2})$ be two metric spaces. A map $f: X \rightarrow Y$ is called a \textbf{quasi-isometry} if there exists $K>0$ such that 
    \begin{itemize}
        \item For all $z,y \in X_{1},$ we have that 
        $$ K^{-1}d_{1}(z,y) - K \leq d_{2}(f(z),f(y)) \leq Kd_{1}(z,y) + K \, . $$
        \item For all $x_{2} \in X_{2}$ there exists some $x_{1} \in X_{1}$ such that $d_{2}(x_{2},f(x_{1})) \leq K.$
    \end{itemize}
We say that $X_{1}$ is quasi-isometric to $X_{2}$ if there exists a quasi-isometry $f:X_{1}\rightarrow X_{2}.$
\end{definition}

The celebrated theorems of Gromov and Trofimov \cite{gromov_poly_growth, Trofimov1985GRAPHSWP} imply that a vertex-transitive graph of polynomial growth is always quasi-isometric to a Cayley graph of a finitely generated nilpotent group. Moreover, an important consequence of combining this with Bass-Guivarc'h formula (see \cite{bass, MR272943}) is that for any vertex-transitive graph of polynomial growth, there exists an integer \(d \) and a constant $C$ such that
\begin{equation}\label{gromov}
\frac{1}{C}n^d \leq g(n) \leq Cn^d , \quad  \forall n \geq 1 \, .
\end{equation}
The results above can be generalized to quasi-transitive \footnote{Equation \eqref{gromov} holds for $g_{x}(n)$ for every $x \in V.$ } graphs (for example, see Theorem 5.11 of \cite{MR1743100}). We call this integer \(d\) the \emph{growth exponent} of \(G\). A more detailed discussion on quasi-transitive graphs with polynomial growth can be found in the appendix. 

 Now, recall the definition of $p_{c}^{\text{site}}(G)$. For a graph $G:= (V,E),$ we consider site percolation on $G,$ that is, each vertex of $G$ is independently colored black (respectively white) with probability $p$ (respectively $1-p$), and denote the measure associated to this process as $\bP_{p}^{\text{site}}$. We denote by $\mathcal{C}_{x}^{\text{site}}$ the connected component of black vertices connected to $x$ (if $x$ is white, this set is empty). We define $p_{c}^{\text{site}}(G):= \inf \{p: \bP^{\text{site}}_{p}(\exists x, \hspace{0.5em} |\mathcal{C}^{\text{site}}_{x}|= \infty)>0 \}.$ We now introduce the idea of a group net, inspired by \cite{MR4529920}, which will play an important role in the proof of the existence of the phase transition.
 
\begin{definition} \label{def: nets}
Let $V_0 \subseteq V$. We say $V_0$ is \textbf{$a$-separated} if $d_G(x, y) \ge a$ for all $x, y \in V_0$. For $b\geq 1,$ we say that $V_{0}$ forms an \textbf{$(a, b)$-group net} of $G$ if the auxiliary graph $ G_{0}:= (V_{0},E_{0}),$ where
\begin{align*}
	E_0 := \big\{ \{x, y\}: x,y \in V_0, \, d_G(x, y) \le 4b \big\},
\end{align*}
satisfies that $p^{\mathrm{site}}_c(G_0) \le 3/4$.
\end{definition}

\begin{remark}
Although Definition \ref{def: nets} was inspired by the definitions of $(a, b)$-nets and controlled-nets introduced in \cite{MR4529920}, it requires fewer conditions. Unlike \cite{MR4529920}, we do not require the net to be dense, as this condition is not necessary for the proof. Additionally, while \cite{MR4529920} considers a sequence of graphs that converges locally, we focus on a single fixed graph $G$. In this regard, results from \cite{MR4529920} such as Lemma $2.3$ regarding the existence of nets also imply the existence of group nets. We also highlight that, as pointed out in \cite{MR4529920},  the choice of $3/4$ is arbitrary and can be replaced by any value strictly less than $1$.
\end{remark}

We now show the existence of $(a, b)$-group nets on quasi-transitive graphs of polynomial growth, for some intrinsic parameters $\beta$ depending only on the graph. Group nets will be fundamental to our renormalization-type argument, since they provide the necessary structure to be able to apply such ideas. The following proof strongly relies on the fact that we have a graph of polynomial growth and is essentially a consequence of Lemma $2.3$ of \cite{MR4529920}, which proved the result for Cayley graphs of nilpotent groups with $p^{\text{site}}_{c}(G)<1$.

\begin{proposition} \label{prop:net-polynomial}
Let $G$ be a quasi-transitive graph of superlinear polynomial growth. Then, we can find a constant $\beta = \beta(G)$ such that, for all $a \ge 1$, there exists an $(a, \beta a)$-group net of $G$. 
\end{proposition}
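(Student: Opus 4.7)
The plan is to construct the desired group net by importing the net construction of \cite{MR4529920}. Since Definition \ref{def: nets} imposes the relatively weak requirement $p_c^{\mathrm{site}}(G_0) \le 3/4$ (in place of the density and dyadic-structure conditions used in \cite{MR4529920}), the task reduces to exhibiting a subgraph of $G_0$ that is isomorphic to a standard lattice $\Z^r$ with $r \ge 2$, since site percolation on such a lattice has a critical value well below $3/4$.

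First I would invoke the Gromov--Trofimov theorem together with the Bass--Guivarc'h formula recalled in the excerpt, to fix a quasi-isometry $f: G \to \mathrm{Cay}(\Gamma,S)$, where $\Gamma$ is a finitely generated virtually nilpotent group with integer growth exponent $d \ge 2$ (the superlinear polynomial growth hypothesis forces $d \ge 2$). A standard fact, used in the proof of Lemma 2.3 of \cite{MR4529920}, is that after passing to a finite-index subgroup if necessary, such a $\Gamma$ admits a surjective homomorphism $\varphi: \Gamma \twoheadrightarrow \Z^{r}$ for some $r \ge 2$, together with a section $\sigma: \Z^r \to \Gamma$ of bounded distortion, i.e.\ a map (not a homomorphism) satisfying $\varphi \circ \sigma = \mathrm{id}$ and $d_\Gamma(\sigma(n),\sigma(m)) \le M\,|n-m|_{\ell^1}$ for some constant $M = M(\Gamma)$.

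Given $a \ge 1$, I would choose an integer $N = N(a)$ proportional to $a$ (with constant depending on $G$, on the quasi-isometry constants of $f$, and on $M$), and set
\begin{equation*}
V_0 := \{ x_n : n \in N\Z^r \}, \qquad x_n := f^{-1}(\sigma(n)),
\end{equation*}
where $f^{-1}$ denotes a fixed coarse inverse. The quasi-isometry inequalities, combined with the Lipschitz bound on $\varphi$, imply that for the proportionality constant chosen large enough, distinct points of $V_0$ lie at $G$-distance at least $a$, so $V_0$ is $a$-separated. In the opposite direction, for lattice neighbors $n, n' \in N\Z^r$ at $\ell^1$-distance $N$, the section bound yields $d_\Gamma(\sigma(n),\sigma(n')) \le MN$, and pulling back through $f$ gives $d_G(x_n, x_{n'}) \le C a$ for a constant $C = C(G)$ independent of $a$.

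Setting $\beta := C/4$ (or any larger constant, chosen once and for all depending only on $G$), the auxiliary graph $G_0$ then contains, on the same vertex set $V_0$, a subgraph isomorphic to the nearest-neighbor graph on $\Z^r$. Since site percolation is monotone in the edge set, we get $p_c^{\mathrm{site}}(G_0) \le p_c^{\mathrm{site}}(\Z^r) \le p_c^{\mathrm{site}}(\Z^2) < 3/4$, as required. The main obstacle in this plan is the existence and quantitative control of the section $\sigma$, which requires a careful use of the nilpotent group structure (in particular, that some finite-index subgroup of a finitely generated nilpotent group of superlinear polynomial growth has abelianization of torsion-free rank at least $2$); this is the content of the structural input from \cite{tessera-tointon} and \cite{MR4529920}, but some care is needed to ensure that all constants are uniform in $a$ so that the resulting $\beta$ depends only on $G$.
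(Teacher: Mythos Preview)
Your proposal is correct and follows essentially the same strategy as the paper: quasi-isometry to a nilpotent Cayley graph, then exploit the surjection onto $\Z^r$ with $r\ge 2$ to build the net, and conclude by monotonicity of $p_c^{\mathrm{site}}$ under adding edges. The only difference is packaging: the paper invokes Lemma~2.3 of \cite{MR4529920} as a black box to obtain an $(a,2a)$-group net on the nilpotent Cayley graph $\mathcal{H}$ and then transfers it to $G$ via the quasi-isometry, whereas you unpack that lemma by constructing the section $\sigma:\Z^r\to\Gamma$ explicitly and pulling $N\Z^r$ all the way back to $G$ in one step.
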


\begin{proof}

By Theorem 5.11 of \cite{MR1743100}, $G$ is quasi-isometric to a Cayley graph $\mathcal{H}$ of a finitely generated nilpotent group, which also has superlinear growth. By Lemma $2.3$ of \cite{MR4529920}, for all $a\geq1$ there exists an $(a,2a)$-group net of $\mathcal{H}$.  By the definition of quasi-isometry, there exists a map $\phi: \mathcal{H} \rightarrow G$ such that 
\begin{align}
    \frac{1}{K} d_H(x, y) - K \le d_G(\phi(x), \phi(y)) \le K d_H(x, y) + K \, , \label{eq:quasi-isometry-net}
\end{align}
for some $K \ge 1$. Fix $a \ge 1$ and consider a $(K(K+a), 2K(K+a))$-group net of $\mathcal{H}$, and its induced graph denoted by $G_0 = (V_0, E_0)$. Let $V_1 = \phi(V_0)$. It follows from the lower bound in Equation \eqref{eq:quasi-isometry-net} that $V_1$ is $a$-separated. Let $\{u, v\} \in E_0$, and observe that $d_H(u, v) \le 8K(K+a)$. By the upper bound in Equation \eqref{eq:quasi-isometry-net}, we have $d_G(\phi(u), \phi(v)) \le (8 K (K + a)) K + K \le 4 (4 K^3 + K) a$, where the last inequality uses the fact that $K+a \le 2Ka$ when $a, K \ge 1$. Define $E_1 := \{ \{x, y\} : x, y \in V_1, \, d_G(x, y) \le 4 (4 K^3 + K) a \}$ and let $G_1 = (V_1, E_1)$. Furthermore, observe that 
$$G_2 := (V_1, \{ \{ \phi(x), \phi(y) \} \}_{xy\in E_0})$$ 
is a subgraph of $G_1,$ and $G_0$ is isomorphic to $G_2$. Therefore, we must have $p^{\mathrm{site}}_c(G_1) \le p^{\mathrm{site}}_c(G_0) \le 3/4$.
We conclude that $V_1$ is an $(a, (4 K^3 + K) a)$-group net of $G$.

\end{proof}

Let $V_0$ be an $(a, \beta a)$-group net of $G$. Since $V_0$ is $a$-separated, $\mathcal{B} = \{B_v(a/3)\}_{v \in V_0}$ forms a collection of disjoint balls. We now consider the induced graph $G_0 = (V_0, E_0)$, where $E_0 = \{ \{x, y\}: x, y \in V_0, \, d_G(x, y) \le 4\beta a \}$. 
Our goal is to construct an auxiliary supercritical Bernoulli percolation on the vertices $V_0$, so that their openness depends only on particles inside the ball $B_v(a/3)$. The nets will play a fundamental role in the proof of item $(2)$ of Theorem \ref{main:existence}. Indeed, they give us sufficient structure for a renormalization argument. The next bounds do not rely on the nets, but will be later combined with them to show the existence of the phase transition. We also remark that it is not known in general in which classes of graphs this type of renormalization structure can be applied, but the results of \cite{MR4529920} rely heavily on the polynomial growth assumption. We now introduce a few definitions to prove quantitative results that will be fundamental for our proof. Here, we will use results from the appendix regarding the trace of random walks on quasi-transitive graphs of superlinear polynomial growth.

Recall from \cref{introduction} that $x$ is connected to $y$ in $B$, denoted by $x\xrightarrow[]{B} y$, if there exists a chain of infection from $x$ to $y$ that only uses particles whose initial positions are in $B$. Let
\begin{align} \label{def: descendent_set_restricted} 
    \mathcal{A}_x^{ B}: = \{y \in B: x\xrightarrow[]{B} y\}
\end{align}
represent the collection of such vertices. Notice that this quantity depends on the parameters $\lambda$, $t$, but since they are clear from the context, we simply omit them.

\begin{definition} \label{def: nice_vertex}
    Let $B \subseteq V$ be a finite vertex set and $x \in B$. We say $x$ is a \textbf{good} (or $(B, \lambda, t)$-\textbf{good}) vertex if 
    \begin{align*}
    \Big|\mathcal{A}_x^{\lambda, t, B} \Big| \ge |B|/4 \, .
    \end{align*}
    Denote the set of $(B, \lambda, t)$-good vertices by $\mathcal{G}_{B}$.
\end{definition}

For a fixed ball $B_x(a)$, we study the existence of $\left(B_x(a), \lambda, a^2 \right)$-good vertices. The following proposition establishes that the probability of not finding a good vertex in a subset $A \subseteq B_x(a)$ decreases exponentially with respect to the size of $A$.

\begin{proposition} \label{prop: poly_internal} 
    Let $G$ be a quasi-transitive graph that is of superlinear polynomial growth. Then, there exist positive constants $C = C(\lambda, G)$ and $c = c(\lambda,  G)$ such that, for any $x \in V$, $a > C^{\star}(\lambda,G)>1$, and $A \subseteq B_x(a)$, we have
    \begin{align}
        \bP_{\lambda, a^2} \left( \mathcal{G}_{B_x(a)} \cap A = \varnothing \right) \le C \exp(-c \, |A|) \, . \label{ineq: int-dynamic_uppper_bound}
    \end{align}
\end{proposition}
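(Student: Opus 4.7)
My plan is to prove the statement in two steps: first, establish a uniform single-vertex lower bound on the good probability; second, bootstrap this constant lower bound to the claimed exponential decay in $|A|$.

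For the first step, I aim to show that for every $y \in B_x(a)$ and all $a$ above some threshold $C^{\star}(\lambda, G)$,
\[
\bE_{\lambda, a^2}\bigl[|\mathcal{A}_y^{B_x(a)}|\bigr] \;\geq\; c_1 |B_x(a)|
\]
for some constant $c_1 = c_1(\lambda, G) \in (0,1]$. The trace of a single continuous-time random walk by time $a^2$ on a quasi-transitive graph of polynomial-growth exponent $d$ has size comparable to $a^2$ when $d \geq 3$, and to $a^2/\log a$ when $d = 2$, as collected in the appendix. Since each newly visited site contributes an independent $\mathrm{Poisson}(\lambda)$ number of additional walkers, the branching dynamics of the frog model amplify the trace so that the activated cluster restricted to $B_x(a)$ covers a constant fraction of the ball in expectation. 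Combining this with the trivial upper bound $|\mathcal{A}_y^{B_x(a)}| \leq |B_x(a)|$ via a reverse Markov argument then yields $\bP_{\lambda, a^2}(y \in \mathcal{G}_{B_x(a)}) \geq c_1/2$.

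For the second step, the starting observation is that $\mathcal{G}_{B_x(a)} \cap A = \varnothing$ forces
\[
\sum_{y \in A} |\mathcal{A}_y^{B_x(a)}| \;<\; \frac{|A|\,|B_x(a)|}{4},
\]
whereas the expectation of this sum is at least $c_1|A|\,|B_x(a)|$ by the first step. I would upgrade this mean-to-tail gap to an exponential decay via an iterative exploration over the vertices of $A$ combined with a Poisson-thinning decomposition $\eta_z = \eta_z^{(1)} + \eta_z^{(2)}$ that supplies an independent reservoir of fresh particles. At stage $i$, using the abelian property to reveal particle trajectories on demand, I would argue that conditional on $y_1, \ldots, y_{i-1}$ all failing to be good---hence their activated clusters being small---the probability that $y_i$ is good stays bounded below by a uniform positive constant $p_0' > 0$, so telescoping yields $\bP(\mathcal{G}_{B_x(a)} \cap A = \varnothing) \leq (1-p_0')^{|A|}$.

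The hard part will be executing this decoupling cleanly, because the events $\{y_i \in \mathcal{G}_{B_x(a)}\}$ all depend on essentially the same pool of particles in $B_x(a)$, and FKG happens to go in the wrong direction for such an upper bound. A careful accounting is required so that, when processing $y_i$, the fresh reservoir of Poisson-thinned particles is sufficient both to drive the frog model to cover a quarter of $B_x(a)$ and to remain independent of the particles revealed while confirming the failure of $y_1, \ldots, y_{i-1}$. An alternative route---sidestepping the exploration entirely and instead proving a direct exponential concentration estimate for $\sum_{y\in A}|\mathcal{A}_y^{B_x(a)}|$ via the Poisson structure of the particle counts---may also work, but would require quantitative control on the Lipschitz dependence of the cluster sizes on the underlying Poisson point process.
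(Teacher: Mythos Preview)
Your Step~2 is where the real gap lies, and you have correctly identified it yourself. A two-fold Poisson thinning $\eta_z = \eta_z^{(1)} + \eta_z^{(2)}$ supplies only \emph{one} independent reservoir, not one per vertex of $A$. To process $y_1,\ldots,y_{|A|}$ as you describe you would need to split $\lambda$ into $|A|$ pieces of density $\lambda/|A|$; but then the single-vertex bound of Step~1 would have to be applied at density $\lambda/|A|$, and the resulting constant $p_0'$ would degrade with $|A|$, killing the exponential rate. On the other hand, conditioning on the failure of $y_1,\ldots,y_{i-1}$ at the full density $\lambda$ reveals information about \emph{all} particles in $B_x(a)$, so no genuinely fresh reservoir remains for $y_i$. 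Your alternative route via concentration for $\sum_{y\in A}|\mathcal{A}_y^{B_x(a)}|$ faces the same obstacle: the summands are highly correlated and each depends non-locally on the Poisson field.

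The paper sidesteps this entirely. It runs a \emph{single} exploration that reveals one vertex at a time (first the vertices of $A$, then activated vertices outside $A$), and at step $k$ it opens a vertex $y_k$ whose particles have never been touched. Because $y_k$ is fresh, its $\mathrm{Po}(\lambda)$ particles are automatically independent of the past $\mathcal{F}_{k-1}$ --- no thinning is needed. The analytic input is Lemma~\ref{lemma: random_walk_range}: a single trajectory from any point of $B_x(a)$ has probability $\geq c_0$ of hitting at least $c_1\theta(a)$ vertices of $B_x(a)\setminus H$, \emph{uniformly} over any already-explored set $H$ with $|H|\le \tfrac34|B_x(a)|$. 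Hence the increment at step $k$ stochastically dominates $c_1\theta(a)$ times a Bernoulli of parameter $1-e^{-c_0\lambda}$, with mean $\geq 2$ once $a$ is large. The exponent $|A|$ then arises not from independence across the $y_i$, but from the fact that the exploration is guaranteed at least $|A|$ steps before it can terminate (Claim~\ref{claim: internal_exploration}); a Chernoff bound on the sum of these i.i.d.\ increments gives the $e^{-c|A|}$.

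Your Step~1 is also less solid than it appears: the bound $\bE[|\mathcal{A}_y^{B_x(a)}|]\ge c_1|B_x(a)|$ is essentially the $|A|=1$ case of the proposition, and ``branching dynamics amplify the trace'' is precisely the statement to be proved --- a single trajectory only covers $\theta(a)\ll |B_x(a)|$ vertices when $d\ge 3$. The same exploration argument is what delivers it.
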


In order to prove this proposition, we need the following lemma about the range of a single random walk trajectory. Recall that $\mathrm{R}_y(t) := \{ Y_y(s) : 0 \leq s \leq t \}$ denotes the range of a continuous-time simple random walk starting at $y$ up to time $t$, and $\bP_y$ denotes the probability measure associated with it. The proof of Lemma \ref{lemma: random_walk_range} will be given in the appendix.

\begin{lemma} \label{lemma: random_walk_range}
Let $G$ be a quasi-transitive graph that is of superlinear polynomial growth.
Consider a continuous-time simple random walk on $G$. 
There exist positive constants $c_0 = c_0(G)$ and $c_1 = c_1(G)$ such that the following is true.
Let $x \in V$ and $a > 1$.
For any $H \subseteq B_x(a)$ with $|H| \leq \frac{3}{4}|B_{x}(a)|$  and any $y \in B_x(a)$, if we define $\widetilde{\mathrm{R}}^{\text{H}}_y(t) := \mathrm{R}_y(t) \cap (B_x(a) \setminus H)$, \footnote{Here we omit the dependency of $\mathrm{R}_{y}^{i}(t)$ on the particle label since this is true for any random walk starting at $y$. We also omit the dependency on $y$ when it is clear from the context.} then

\vspace{2mm}

\begin{align*}
    \bP_y \left( |\widetilde{\mathrm{R}}^{\text{H}}(a^2)| \geq c_1 \cdot \theta(a) \right) \geq c_0 \, ,
\end{align*}
where $\theta(a)$ is defined as 
\begin{align}\label{functionf}
    \theta(a) := \begin{cases}
        a^2 & \text{if } d \geq 3, \\
        a^2 / \log a & \text{if } d = 2.
    \end{cases}
\end{align}
\end{lemma}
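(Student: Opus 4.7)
The plan is to apply a Paley--Zygmund second moment argument to the random variable $N := |\widetilde{\mathrm{R}}^{\mathrm{H}}(a^2)|$. The two ingredients needed are a lower bound of order $\theta(a)$ on $\bE_y[N]$ and a matching upper bound of order $\theta(a)^2$ on $\bE_y[N^2]$. Both will follow from heat kernel / hitting probability bounds that are standard for quasi-transitive graphs of polynomial growth (e.g.\ Hebisch--Saloff-Coste type estimates) combined with the reversibility of the walk.

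For the first moment, I would write
\[
\bE_y[N] \;=\; \sum_{v \in B_x(a) \setminus H} \bP_y\bigl(v \in \mathrm{R}_y(a^2)\bigr),
\]
and use the uniform heat kernel lower bounds available on graphs of polynomial growth of exponent $d$ to obtain, for $v$ with $r := d_G(v, y) \lesssim a$, the estimates $\bP_y(v \in \mathrm{R}_y(a^2)) \gtrsim (1 + r)^{-(d-2)}$ when $d \geq 3$ and $\bP_y(v \in \mathrm{R}_y(a^2)) \gtrsim \log(a/(1+r))/\log a$ when $d = 2$. A direct summation over $v \in B_x(a)$ reproduces the classical range estimate $\bE_y[|\mathrm{R}_y(a^2) \cap B_x(a)|] \asymp \theta(a)$. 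The less routine point is the effect of the adversarial $H$: its worst configuration is essentially a ball around $y$ of radius $\alpha a$ with $\alpha := (3/4)^{1/d} < 1$ (so as to delete vertices of highest hitting probability), but a direct computation shows that summing the hitting probability over the remaining annulus $B_x(a) \setminus B_y(\alpha a)$ still yields a contribution of order $\theta(a)$, since the dominant mass of the sum comes from distances $r \asymp a$ in both regimes.

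For the second moment, I would use the strong Markov decomposition at the first visit time of $v$ to obtain the standard inequality
\[
\bP_y\bigl(v, w \in \mathrm{R}_y(a^2)\bigr) \;\leq\; \bP_y(v \in \mathrm{R}_y(a^2)) \cdot \sup_{u \in V} \bP_u(w \in \mathrm{R}_u(a^2)),
\]
and the same pointwise hitting bounds then give $\bE_y[N^2] \leq C_3 \, \theta(a)^2$ after summation over $v, w \in B_x(a) \setminus H$. Paley--Zygmund then produces $\bP_y(N \geq c_1 \theta(a)) \geq c_0$ for $c_1$ small enough and some $c_0 > 0$ depending only on $G$.

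The main obstacle will be the adversarial control of $H$ in the first moment step: one must verify, using the uniformity provided by quasi-transitivity, that the heat kernel lower bounds hold with constants depending only on $G$ (and not on the starting point or on local fluctuations of the graph), and handle the logarithmic correction in the delicate case $d = 2$ by a careful dyadic decomposition of the annulus around $y$.
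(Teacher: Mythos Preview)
Your overall scheme---Paley--Zygmund applied to $N=|\widetilde{\mathrm{R}}^{\mathrm H}(a^2)|$---is exactly what the paper does, and your second moment bound via the two-point inequality would go through. The one place where you make your life harder than necessary is the first moment: you propose pointwise hitting estimates $\bP_y(\tau_v\le a^2)\asymp(1+r)^{-(d-2)}$ (or the $d=2$ analogue) and then argue about the adversarial placement of $H$ via a rearrangement to a ball around $y$. The paper sidesteps this entirely. Instead of using the full Green function, it integrates the heat kernel only over $s\in[a^2/2,a^2]$; on that time window the Gaussian lower bound gives $p_s(y,v)\gtrsim a^{-d}$ \emph{uniformly} for every $v\in B_x(K_1 a)$, so $\mathbf G_{a^2}(y,v)\gtrsim a^{2-d}$ uniformly in $v$. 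Summing over any $D$ with $|D|\ge|B_x(a)|/4$ then gives $\sum_{v\in D}\mathbf G_{a^2}(y,v)\gtrsim a^2$ without any knowledge of the geometry of $H$, and dividing by $\mathbf G_{a^2}(v,v)\asymp\theta(a)^{-1}a^2$ yields the first moment lower bound. This replaces your dyadic/annulus analysis (and the delicate $d=2$ case) by a one-line computation.

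For the second moment the paper also takes a slightly different route: rather than bounding $\bP_y(v,w\in\mathrm R_y(a^2))$ pointwise, it shows submultiplicativity of the tail $A(k):=\max_y\bP_y(N\ge 2kC\theta(a))$ via the strong Markov property, so that $A(k)\le 2^{-k}$, and reads off $\bE[N^2]\lesssim\theta(a)^2$ from the exponential tail. Either method works; the paper's avoids a double sum.
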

Now, let us turn ourselves back to the proof of the proposition.
\begin{proof}[Proof of Proposition \ref{prop: poly_internal}]
    We first observe that it suffices to prove the inequality \eqref{ineq: int-dynamic_uppper_bound} for sets $A$ satisfying $|A| < |B_x(a)|/4$ and for $a$ larger than some constant $C'$ depending on $G$ and $\lambda$. Indeed, for sets with $|A| \geq |B_x(a)|/4$, we can simply apply the proposition to a subset of $A$ containing exactly $\lfloor|B_x(a)|/4\rfloor$ vertices. The inequality \eqref{ineq: int-dynamic_uppper_bound} then holds for all $A \subseteq B_x(a)$ with the constant $c$ in \eqref{ineq: int-dynamic_uppper_bound} being replaced by $c/5$, provided that $a \geq C'$ and $|B_x(a)| \geq 20$. We can then extend the proposition to all $a>1$  by appropriately adjusting the constant $C$ in \eqref{ineq: int-dynamic_uppper_bound}. 

    Consider the frog model with particle density $\lambda$ and lifespan $a^2$. Our goal is to find a vertex $z$ in $A$ such that 
    \begin{align*}
        |\mathcal{A}_z^{\lambda, a^2, B_{x}(a)}| \geq |B_{x}(a)|/4 \, ,
    \end{align*}
    which would make $z$ a good vertex. We shorten the notation $\mathcal{A}_z^{\lambda, a^2, B_{x}(a)}$ by $\mathcal{A}_z$ throughout the rest of this section.

  Fix an arbitrary order in the set $B_{x}(a).$ Let $\big\{v_{1},\dots,v_{|A|}\big\}$ be the ordered sequence of vertices in A. 
  We define an exploration process of the vertices of $B_{x}(a)$ to determine a subset $\mathcal{G}_{B_{x}(a)} \cap A$.  We start by considering the realization of the frog model at $v_{1}$ with no particles outside $B_{x}(a)$ and no particles inside $A.$ Initially, we only explore the descendants of $v_{1}$ outside of the set $A.$  When all its descendants (and descendants of descendants) are fully explored, we continue this exploration process by considering the particles of $v_{2}$ as active and exploring its descendants outside of $A$ and inside $B_{x}(a),$ now also not using the vertices that were revealed in the previous iteration. We further iterate this procedure until $v_{|A|}$. We now make this precise by defining the following three sets and, to avoid a heavier notation, we remark now that we only consider vertices inside $B_{x}(a)$. We start with the following verbal description, but we note that precise inductive definitions are also given throughout the proof:

   \begin{itemize}[left=20pt, itemsep=0.5em, topsep=0.5em, rightmargin=25pt] 
        \item[-] Let $\mathcal{V}_{k}$ denote the set of vertices that had their trajectories fully revealed upon the completion of stage $k-1$;
        \item[-] Let $\mathcal{M}_k$ denote the set of active vertices which will eventually have their trajectories fully revealed  after or at the end of stage $k$;
        \item[-] Let $\mathcal{I}_{k}$ denote the incremental set, that is, the set of vertices outside of $A$ that have been visited by the process for the first time upon completion of the stage $k-1$.
    \end{itemize}

In the beginning, we set $\mathcal{M}_{0} = \{v_{1}\},$ and $\mathcal{I}_{0}=\mathcal{V}_{0} = \varnothing.$ To aid our construction, we further introduce the sets $\mathcal{U}$ and $\mathcal{W}$ which are taken to be equal to $A$ in the beginning $\mathcal{U}_{0} = \mathcal{W}_{0} = A$.

For $k\geq 1$, after $k-1$ stages have been completed, we define the sets in the following way. If $\mathcal{M}_{k-1} \neq \varnothing$ we take $y_{k} \in \mathcal{M}_{k-1}$ and explore its range $\mathcal{R}_{y_{k}},$ further setting $\mathcal{V}_{k} = \{y_{1},...,y_{k}\}$. Otherwise, if $\mathcal{M}_{k-1} = \varnothing$, the process has terminated, and there are no current active vertices. 

In the case where the process has not terminated, we let $\mathcal{I}_{k} = \mathcal{R}_{y_k}\setminus \mathcal{W}_{k-1}$, representing the newly explored vertices, and we have that the auxiliary sets are given by $\mathcal{W}_{k}= \mathcal{W}_{k-1} \cup \mathcal{I}_k$ and $\mathcal{U}_{k} = \mathcal{W}_{k}\setminus \mathcal{V}_{k}.$ Finally, we observe the set of active vertices are given by $\mathcal{M}_{k} =\mathcal{U}_{k}\setminus A,$ if $\mathcal{U}_{k}\setminus A \neq \varnothing$, and the first vertex of the set $A\setminus \mathcal{V}_k$ otherwise. Observe that this process terminates after all vertices in $A$, and their respective descendants, have been explored.

 We will show that the size of the incremental set stochastically dominates a Bernoulli random variable multiplied by a positive constant. Moreover, the mean of this Bernoulli random variable depends only on $\lambda$ and $G$, while the multiplying constant can be made arbitrarily large by choosing $a$ sufficiently large. Therefore, for fixed $\lambda$, if $a$ is large enough, the mean increment becomes substantial. This observation is crucial to prove that the failure to find a good vertex in $A$ is a large deviation event when $|A|$ is sufficiently large.

    Let $\cF_0$ be the trivial $\sigma$-field and, for $k \ge 1$, let $\cF_k := \sigma(\{ \mathcal{I}_{j} \}_{j = 1}^{k})$. Thus, $\{ \cF_k \}_{k \ge 0}$ forms a filtration. We define two stopping times with respect to $\{ \cF_k \}_{k \ge 0}$
    \begin{align*}
        \tau_1 &:= \inf \{ k: \cU_k = \varnothing \} \, , \\
        \tau_2 &:= \inf \bigg\{ k: |\mathcal{W}_k| > \frac{3 |B_{x}(a)|}{4} \bigg\} \, ,
    \end{align*}
    and let $\tau := \min \{ \tau_1, \tau_2 \}$. We also let $\Delta_k := |\mathcal{I}_{k}|$, be the size of the $k$-th incremental set. Moreover, we define $S_k := \sum_{i=1}^{k\wedge \tau_{1}} \Delta_i$ as the total size of the incremental sets up to  $\min \{k,\tau_{1}\}$. We now provide a sufficient condition for the existence of a good vertex in $A$.

    \begin{claim} \label{claim: internal_exploration}
        Assume $A$ is non-empty and $S_k \ge k$ for all $|A|\leq k \leq \tau$. Then, there exists a good vertex $z^\star$ in $A$.
    \end{claim}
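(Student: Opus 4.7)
My plan is to turn the hypothesis $S_k\ge k$ into a structural property of the exploration through a simple counting identity. At every stage exactly one vertex is processed, so writing $m_k$ for the number of seeds processed by stage $k$ and $T_k$ for the number of non-seed vertices processed by stage $k$, we have $k=m_k+T_k$ and $T_k\le S_k$. The hypothesis thus forces $S_k-T_k\ge m_k$, i.e.\ the pending pool $\mathcal{M}_k\setminus A$ stays comparable to the number of seeds already initiated throughout $[|A|,\tau]$. In particular, the exploration is actively producing discoveries faster than it consumes them, rather than merely burning through the seed pool.

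I would then distinguish the two cases $\tau=\tau_2$ and $\tau=\tau_1$. If $\tau=\tau_2$, then $|\mathcal{W}_\tau|>3|B_x(a)|/4$ combined with $|A|<|B_x(a)|/4$ (the reduction made earlier) gives $S_\tau>|B_x(a)|/2$. Decomposing $S_\tau=\sum_{i\in I}|D_i|$ over the at most $|A|$ initiated cascades $I$, and using that the restricted cascade uses a sub-collection of the true-model particles so $D_{i^\star}\subseteq\mathcal{A}_{v_{i^\star}}^{B_x(a)}$, a pigeonhole argument produces a seed $v_{i^\star}$ whose true descendant set is sufficiently large. If instead $\tau=\tau_1$, the identity $\tau_1=|A|+\sum_{i<|A|}|D_i|$ substituted into $S_{\tau_1}\ge\tau_1$ forces $|\mathcal{I}_{\tau_1}|\ge|A|$, so the last initiated seed $v_{|A|}$ contributes at least $|A|$ fresh discoveries in its first firing alone; a parallel argument then picks $z^\star=v_{|A|}$.

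The main obstacle is that the pigeonhole bound $|D_{i^\star}|\ge S_\tau/|A|$ is weaker than the desired threshold $|B_x(a)|/4$ when $|A|$ lies in an intermediate range. Closing this gap requires invoking the hypothesis at \emph{every} intermediate stage $k\in[|A|,\tau]$, not only at $k=\tau$: the bookkeeping identity then rules out configurations of many small disjoint cascades whose sizes together sum to $S_\tau$ and instead forces one dominant cascade. An alternative route exploits the DAG of physical visits between seeds, since whenever $v_i$'s restricted cascade passes through another seed $v_j$ one has $\mathcal{A}_{v_j}^{B_x(a)}\subseteq\mathcal{A}_{v_i}^{B_x(a)}$ in the true model, so a source of this DAG aggregates descendants across several cascades into a single large set. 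Either route delivers a good vertex $z^\star\in A$ and completes the proof of the claim.
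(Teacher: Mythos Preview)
Your plan does not close, and you correctly identify the gap yourself: pigeonholing $S_\tau$ over the initiated cascades gives at best $|D_{i^\star}|\ge S_\tau/|A|>2$, far short of $|B_x(a)|/4$. Using the hypothesis at all intermediate $k$ does not rescue this averaging argument, and your DAG alternative is also inconclusive: the restricted cascades never use particles from vertices of $A$, so you would have to track which seeds are merely \emph{visited} by particles in each cascade, and even then there is no reason a single source of the resulting DAG reaches cascades of total size $|B_x(a)|/4$. Your treatment of the case $\tau=\tau_1$ is based on a wrong identity: at $\tau_1$ every seed and every discovered non-seed has been processed, so $\tau_1=|A|+S_{\tau_1}$, whence $S_{\tau_1}\ge\tau_1$ forces $|A|\le 0$. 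In other words this case simply does not occur under the hypothesis; it does not produce a last-seed bound.

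The paper's argument uses a structural feature of the exploration that you never exploit: seeds are processed \emph{sequentially}, each one's cascade being fully exhausted before the next seed is initiated, so at every stage the pending non-seed vertices all lie in the cascade of the \emph{single} seed currently being explored. The hypothesis $S_k\ge k$ gives $|\mathcal{U}_k\setminus A|\ge S_k-(k-1)\ge 1$ for every $k\in[|A|,\tau]$, so from stage $|A|$ on the current cascade never empties and no new seed is ever started; call that seed $z^\star$. Every vertex of $\mathcal{W}_\tau$ outside $A\cup\{y_1,\dots,y_{|A|}\}$ then lies in $z^\star$'s restricted (hence true) descendant set, so $|\mathcal{A}_{z^\star}|\ge|\mathcal{W}_\tau|-2|A|>3|B_x(a)|/4-|B_x(a)|/2=|B_x(a)|/4$. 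No distribution over seeds is needed because the sequential design funnels all the discovered mass into one seed automatically; this is the mechanism your sketch is missing.
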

    
    \begin{proof}[Proof of Claim \ref{claim: internal_exploration}]
First, suppose that $\tau_{2}< |A|.$ 
Since $A$ is non-empty, the exploration process does not terminate immediately. 
Moreover, since we explore exactly one vertex in each stage, we must have $\tau_1 \geq |A|$, meaning the exploration process must complete at least $|A|$ stages, so in that case $\tau=\tau_{2}$. 
Furthermore, observe that 
$$|\mathcal{U}_{\tau} \setminus A| \geq S_{\tau} - \tau+1 \geq |A|+S_{\tau} - 2|A|+1 \geq \frac{1}{4}|B_{x}(a)| \, ,  $$
and since this implies that $|\mathcal{M}_{\tau}| \geq \frac{1}{4}|B_{x}(a)| $ we must have that there exists a good vertex in $A$.

      Now, consider the case when $\tau_{2} \geq |A|.$  We show that if $S_{k} \geq k$ for every $|A| \leq k \leq \tau$, then it is also true that $\tau = \tau_2$.  Observe that for any $|A|\leq k \leq \tau$ such that $S_k \geq k$, we have
        \begin{align*}
            |\mathcal{U}_k \setminus A| \geq S_k - k+1 \geq 1 \, ,
        \end{align*}
        which implies that the process continues to the $(k+1)$-th stage and therefore $\tau = \tau_2$. Furthermore, this also implies that for $|A| \leq k \leq \tau,$ the set of active particles is $\mathcal{M}_{k}= \mathcal{U}_{k} \setminus A$ since we can always pick a new vertex from $\mathcal{U}_{k} \setminus A,$ in particular having $\mathcal{M}_{|A|}= \mathcal{W}_{|A|}\setminus (\mathcal{V}_{|A|} \cup A)$. Since in the steps where $|A|\leq k\leq \tau$, the revealed vertices are necessarily outside of $A,$ we must have that they all belong to the exploration of the same vertex in $A,$ say $z^{\star},$ which implies that
        \begin{align*}
            \mathcal{W}_\tau \setminus \Bigl( \{y_1, \ldots, y_{|A|}\} \cup A \Bigr) \subseteq \mathcal{A}_{z^\star} \, .
        \end{align*}
        This concludes the proof, because we know that  $|\mathcal{W}_\tau| > 3|B_{x}(a)|/4$ which combined with the above claim implies that $|\mathcal{A}_{z^\star}| > 3|B_{x}(a)|/4 - 2|A| > |B_{x}(a)|/4$ since we assumed that $|A| < |B_{x}(a)|/4$.
\end{proof}

    We now continue the proof of Proposition \ref{prop: poly_internal}. Recall the definition of $\theta(a)$ in Equation \eqref{functionf}. By Lemma \ref{lemma: random_walk_range} and Poisson thinning on the event $\{ \tau \ge k \} \in \cF_{k-1}$, we have that
    \begin{align}
        \bP_{\lambda, a^2} \left( \Delta_{k} \ge c_1 \cdot \theta(a), \, \tau \geq k  \;\middle|\;
      \mathcal{F}_{k-1} \right) \ge 1 - \exp(-c_0 \lambda) \, , \label{ineq: single_exploration_step_increment_bound}
    \end{align}
    where $c_0$ and $c_1$ are constants depending only on $G$ (the same constants as in \cref{lemma: random_walk_range}). Let $q(\lambda) := 1 - \exp(-c_0 \lambda)$ and let $\{ X_k \}$ be a sequence of i.i.d. Bernoulli random variable with mean $q$. 
    Observe that on the event that $\tau \ge k$, we have that $\{\Delta_{i}\}_{1\leq i \leq k}$ stochastically dominates $\{c_1 \theta(a) X_i\}_{1\leq i \leq k}$. Define $T_k := c_1 \theta(a) \sum_{i=1}^{k} X_i$, then it follows from stochastic domination that 
    \begin{align}
        \bP_{\lambda, a^2}\big( S_k < k, \, \tau \ge k \big) \, \leq \, \bP\big( T_k < k\big) \label{eq: S_k_stoch_dom_T_k}
    \end{align}
    holds for any $k\geq 1$. Therefore, it follows from Claim \ref{claim: internal_exploration} that
    \begin{equation}\label{eq: intermediate_sum}
    \begin{split}
    	\bP_{\lambda, a^2} \left( \mathcal{G}_{B_{x}(a)} \cap A = \varnothing \right)  
        &\le \; \bP_{\lambda, a^2} \bigg( \exists \, k \in \big\{ |A|, ..., \tau \big\} \text{ s.t. } S_k < k  \bigg)  \; \\
        & \leq\bE_{\lambda, a^2} \left[ \sum_{k = |A|}^{|B_{x}(a)|} \mathbbm{1}_{\big\{S_k < k, \, \tau \geq k \big\}}  \right] \\
        &\le \; \sum_{k = |A|}^{|B_{x}(a)|} \bP\big( T_k < k \big) \, ,
        \end{split}
    \end{equation}
    where we use the fact that $\tau \le |B_{x}(a)|$ and  Equation \eqref{eq: S_k_stoch_dom_T_k}. Moreover, 
    \begin{align*}
        \bP \big( \, T_k < k \big) = \bP \left( \sum_{i=1}^{k} X_i < \frac{k}{c_1 \theta(a)} \right). 
    \end{align*}
    Now, choose $a$ sufficiently large (depending on $\lambda$ and $G$) such that $c_1 \theta(a) \cdot q \ge 2$. In particular, if $a \geq \max \big\{ \frac{8}{c_{1}c_{0} \lambda}, \frac{8} {c_{1}},8 \big\}$, then, for any $k$, we have
    \begin{align}\label{firstchernoff}
    	\bP \left( \sum_{i=1}^{k} X_i< \frac{k}{c_1 \theta(a)} \right) \le \bP \left( \sum_{i=1}^{k} X_i  < \frac{kq}{2} \right) \le \left(\frac{2}{e}\right)^{kq/2} ,
    \end{align} 
    where the last inequality is a direct application of Chernoff's inequality. Therefore, for any $a> C^{\star}(\lambda,G)$, \eqref{eq: intermediate_sum} can be upper bounded by:
    \begin{align*}
        \sum_{k = |A|}^{\infty} \left(\frac{2}{e}\right)^{kq/2} \leq C \exp({-c|A|}) \, ,
    \end{align*}
    for some choice of $C(\lambda,G),c(\lambda,G)>0$.

\end{proof}

To motivate the following proposition, we provide an overview of our strategy to construct auxiliary supercritical Bernoulli site percolation on an $(a, \beta a)$-group net.
Initially, we have $\mathrm{Po}(\lambda)$ number of particles at each vertex. Using Poisson thinning, we split these particles into two disjoint sets, $\mathcal{P}^{(1)}(x)$ and $\mathcal{P}^{(2)}(x)$, with each of them containing $\mathrm{Po}(\lambda/2)$ particles.

Let $V_0$ be an $(a, \beta a)$-group net of $G$. Thus, $\mathcal{B} = \{B_v(a/3)\}_{v \in V_0}$ is a collection of disjoint balls. We say that $B_v(a/3)$ and $B_w(a/3)$ are \textit{neighbors} if $\{v, w\} \in E_0$, recalling that we say $(v,w)\in E_0$ if $d(v,w)\leq 4\beta a$.
Our argument proceeds in two phases:

We begin by applying Proposition \ref{prop: poly_internal} to the particles of
$\mathcal{P}^{(1)}(x)$ in each of the balls $B_x(a/3)$, with the set $A$ being the
entire ball.
This implies that the probability of having no good vertices in
$B_x(a/3)$ is at most $e^{-c|B_x(a/3)|}$.
By taking $a$ large, we can make this as small as we wish, uniformly in $x$.

We next wish to show that a good vertex in $B_x(a/3)$ is very likely to
activate all vertices in nearby balls of the group net.
To this end, we will use the independent set of particles $\mathcal{P}^{(2)}(x)$.

Define
\[
   \widehat B_v := \bigcup_{\{v,w\}\in E_0} B_w(a/3) \, .
\]
Also, define the union of trajectories started from any vertex of a set
$D$ by
\[
   \mathcal{R}_D := \bigcup_{x\in D} \mathcal{R}_x \, .
\]

The next proposition implies that the second step has a high probability of success. Below, $\theta(a)$ is as in Equation \eqref{functionf}.

\begin{proposition} \label{prop: poly_neighboring}
    Let $G$ be a quasi-transitive graph of superlinear polynomial growth. 
    Let $V_0$ be an $(a, \beta a)$-group net on $G$. 
    Then, there exist positive constants $C' = C'(G, \lambda, \beta)$ and $c = c(G, \beta)$ such that for any $a > 2$, $v \in V_0$, and $D \subseteq B_v(a/3)$ with $|D| \geq |B_v(a/3)| / 4$, we have
    \begin{align} \label{eq: poly_neighboring_result}
        \bP_{\lambda, a^2} \left( \widehat{B}_v \not\subseteq \mathcal{R}_D \right) \leq C' e^{- c \lambda \theta(a)} \, .
    \end{align}
\end{proposition}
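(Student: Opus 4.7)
My plan is to bound $\bP_{\lambda,a^{2}}(u \notin \mathcal{R}_D)$ for each $u \in \widehat{B}_v$ individually and then apply a union bound. By Poisson thinning applied to the independent $\mathrm{Po}(\lambda)$ particle families at the vertices of $D$, together with independence of the trajectories,
\[
\bP_{\lambda,a^{2}}(u \notin \mathcal{R}_D) \;=\; \exp\!\Bigl(-\lambda \sum_{x \in D} \bP_x(\tau_u \le a^{2})\Bigr),
\]
so the entire task reduces to establishing $\sum_{x \in D} \bP_x(\tau_u \le a^{2}) \ge c\,\theta(a)$ for some constant $c = c(G, \beta) > 0$, uniformly in $u \in \widehat{B}_v$.

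To produce that lower bound I would combine three ingredients. First, the strong Markov property at $\tau_u$ yields $\bP_x(\tau_u \le t) \ge \bE_x[L_t(u)]/\bE_u[L_t(u)]$, where $L_t(y) := \int_0^t \1_{Y(s) = y}\,ds$ is the local time at $y$. Second, reversibility of SRW (the identity $\deg(x)\,p_s(x,u) = \deg(u)\,p_s(u,x)$) combined with the bounded-degree assumption on $G$ gives $\bE_x[L_t(u)] \gtrsim_G \bE_u[L_t(x)]$, so that summing over $x \in D$,
\[
\sum_{x \in D} \bP_x(\tau_u \le a^{2}) \;\gtrsim_G\; \frac{\bE_u[L_{a^{2}}(D)]}{\bE_u[L_{a^{2}}(u)]} \;=\; \frac{\int_0^{a^{2}} \bP_u(Y(s) \in D)\,ds}{\bE_u[L_{a^{2}}(u)]}.
\]
Third, the Gaussian heat kernel bounds for quasi-transitive graphs of polynomial growth (as recorded in the appendix) deliver $\bE_u[L_{a^{2}}(u)] \asymp 1$ when $d \ge 3$ and $\asymp \log a$ when $d = 2$, placing the denominator at order $a^{2}/\theta(a)$; and the lower bound $p_s(u,x) \ge c\,a^{-d}\exp(-C\beta^{2})$ valid for $s \in [a^{2}/2, a^{2}]$ and $d(u,x) = O(\beta a)$ gives $\bP_u(Y(s) \in D) \ge |D|\cdot c\,a^{-d}\exp(-C\beta^{2}) \gtrsim_{G,\beta} 1$ throughout that interval. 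Integrating yields $\bE_u[L_{a^{2}}(D)] \gtrsim_{G,\beta} a^{2}$, and feeding this back produces $\sum_{x \in D}\bP_x(\tau_u \le a^{2}) \gtrsim_{G,\beta}\theta(a)$ and hence $\bP(u \notin \mathcal{R}_D) \le e^{-c\lambda\theta(a)}$.

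For the final union bound, the degree of $v$ in $G_0$ is bounded by a constant $C(G, \beta)$, because the pairwise disjoint balls $\{B_w(a/2)\}_{\{v,w\} \in E_0}$ all lie inside $B_v((4\beta + 1/2)a)$, whose volume is $O_\beta(a^d)$ by polynomial growth; hence $|\widehat{B}_v| \le C'(G, \beta)\,a^d$. Since $\theta(a)/\log a \to \infty$ in both regimes $d \ge 3$ and $d = 2$, the factor $a^d$ is absorbed into $e^{-c\lambda\theta(a)}$ at the cost of halving $c$ (for $a$ beyond an explicit threshold; smaller $a$ are swallowed by the prefactor $C'$), producing the desired form $C' e^{-c'\lambda\theta(a)}$. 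The main obstacle I anticipate lies in the quantitative heat kernel input: one must ensure the Gaussian bounds hold with constants depending only on $G$ and track the $e^{-C\beta^{2}}$-type degradation in $\beta$ so that the final constant $c(G, \beta)$ is positive. An alternative route that sidesteps part of this is to replace the direct heat kernel computation by invoking \cref{lemma: random_walk_range} starting from $u$ in the ball $B_u((4\beta+1)a)$ with $H := B_u((4\beta+1)a)\setminus D$ (whose complement fraction $|D|/|B_u((4\beta+1)a)|$ is bounded away from $0$ by polynomial growth), after generalising the $3/4$-threshold in that lemma to $1-\alpha$.
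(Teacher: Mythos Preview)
Your proposal is correct and follows essentially the same route as the paper. The paper also starts from the Poisson-thinning identity and union bound over $u\in\widehat B_v$, reduces to showing $\sum_{y\in D}\bP_y(\tau_u\le a^2)\ge c\,\theta(a)$, and proves this via the truncated Green function (your $L_t$) together with the Gaussian heat-kernel bounds and the reversibility/bounded-degree swap of $x$ and $u$; the only difference is that the paper packages this last step as a standalone lemma (\cref{lemma: random_walk_expected_hitting_size_main_text}, proved in the appendix) rather than unpacking it in place. One cosmetic point: for your degree bound on $G_0$ use balls of radius $a/3$ rather than $a/2$, since $a$-separation only guarantees disjointness for radii strictly below $a/2$; alternatively, as the paper does, simply bound $|\widehat B_v|\le |B_v((4\beta+\tfrac13)a)|\lesssim_{G,\beta} a^d$ directly.
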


We will apply the above proposition to the second set of particles $\mathcal{P}^{(2)}(x)$, which has density $\lambda/2$, but we prove the theorem for an arbitrary $\lambda>0$. In order to prove Proposition \ref{prop: poly_neighboring}, we use the following auxiliary result for random walks on quasi-transitive graphs of superlinear polynomial growth.

\begin{lemma} \label{lemma: random_walk_expected_hitting_size_main_text}
    Let $G$ be a quasi-transitive graph of superlinear polynomial growth.
    Consider a continuous-time simple random walk on $G$. Let $K_1, K_2 > 1$. 
    Then, there exists a constant $c = c(G, K_1, K_2)$, such that for any $a \ge 2$, $x \in V$, and $D \subseteq B_x(K_1 a)$ with $|D| \ge |B_x(a)|/K_2$, we have

    \vspace{2mm}

\begin{align} \label{lemma: random_walk_expected_hitting_size_switched}
        \sum_{y \in D} \bP_y \left( \tau_x \le a^2 \right) \ge c\cdot \theta(a) \, .
\end{align}
\end{lemma}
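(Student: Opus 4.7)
The plan is to reduce the desired bound to quantities controlled by the heat kernel of the walk and then invoke the standard two-sided Gaussian heat kernel estimates available on quasi-transitive graphs of polynomial growth (Hebisch--Saloff-Coste). Write $G_T(u,v):=\int_0^T p_s(u,v)\,ds$ for the truncated Green's function. The strong Markov property at $\tau_x$, together with the domination of the local time of $x$ after $\tau_x$ by that of a fresh walk started at $x$ run for time $a^2$, gives the elementary comparison
\begin{equation*}
G_{a^2}(y,x) \;\leq\; \bP_y(\tau_x\leq a^2)\cdot G_{a^2}(x,x) \qquad \text{for every } y\in V.
\end{equation*}
Reversibility with respect to the stationary measure $\pi\propto\deg$, combined with the bounded degree ratio available on any quasi-transitive graph, yields $G_{a^2}(y,x)\asymp G_{a^2}(x,y)$. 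Summing over $y\in D$ and swapping the sum with the time integral then gives
\begin{equation*}
\sum_{y\in D}\bP_y(\tau_x\leq a^2) \;\gtrsim\; \frac{1}{G_{a^2}(x,x)}\,\int_{0}^{a^2}\bP_x(X_s\in D)\,ds.
\end{equation*}

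Next, both factors on the right will be controlled via the Gaussian heat kernel bound $p_s(u,v)\asymp s^{-d/2}\exp(-c\,d(u,v)^2/s)$, valid for all $s\geq 1$ on a quasi-transitive graph of polynomial growth with exponent $d$. Integrating the diagonal upper bound gives $G_{a^2}(x,x)\asymp \int_1^{a^2}s^{-d/2}\,ds\asymp a^2/\theta(a)$, with the two regimes in~\eqref{functionf} corresponding to $d\geq 3$ (integral bounded) and $d=2$ (integral $\asymp\log a$). For the time integral, we restrict attention to $s\in[a^2/2,a^2]$: on this range the lower Gaussian bound yields $p_s(x,y)\gtrsim c(K_1)\,a^{-d}$ uniformly for $y\in B_x(K_1a)$, and combined with $|D|\geq|B_x(a)|/K_2\gtrsim a^d/K_2$ from~\eqref{gromov} this gives $\bP_x(X_s\in D)\gtrsim c(K_1,K_2,G)$. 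Integrating in $s$ produces $\int_0^{a^2}\bP_x(X_s\in D)\,ds\gtrsim c(K_1,K_2,G)\,a^2$, and substituting back delivers the claimed bound $\sum_{y\in D}\bP_y(\tau_x\leq a^2)\gtrsim c\,\theta(a)$.

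The main obstacle is the invocation of the two-sided Gaussian heat kernel bound on a quasi-transitive graph of polynomial growth. Although classical---it follows, for instance, from the parabolic Harnack inequality of Hebisch--Saloff-Coste, or by transferring via the Gromov--Trofimov quasi-isometry used already in Proposition~\ref{prop:net-polynomial} the sharp Gaussian estimates known on nilpotent Cayley graphs---this is a non-trivial analytic input whose derivation is presumably relegated to the appendix, alongside the proof of Lemma~\ref{lemma: random_walk_range}, which draws on the same ingredients. Once these heat kernel bounds are in hand, the two-step reduction above is essentially mechanical; one should also note that the same argument produces sharper pointwise bounds $\bP_y(\tau_x\le a^2)\gtrsim c(K_1)\,d(x,y)^{-(d-2)}$ for $d\ge 3$ and $\gtrsim c(K_1)/\log a$ for $d=2$, but the crude form above is already enough to close the estimate.
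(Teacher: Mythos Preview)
Your proposal is correct and follows essentially the same route as the paper. The paper derives the bound from an appendix lemma (Lemma~\ref{apx_lemma: random_walk_expected_hitting_size}) proved via the same three ingredients you use---the strong Markov comparison $G_{a^2}(x,y)\le \bP_x(\tau_y\le a^2)\,G_{a^2}(y,y)$, the diagonal Green's function asymptotic $G_{a^2}(x,x)\asymp a^2/\theta(a)$ from the Gaussian heat kernel bounds (obtained in the appendix via Delmotte's parabolic Harnack inequality, which is the same analytic input you cite), and the lower bound on $\sum_{y\in D}G_{a^2}(x,y)$ by integrating the Gaussian lower bound over $s\in[a^2/2,a^2]$---and then swaps $x$ and $y$ via bounded degree at the very end, whereas you apply reversibility to the Green's function at the start; this is a cosmetic difference only.
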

Lemma \ref{lemma: random_walk_expected_hitting_size_main_text} follows directly from Lemma \ref{apx_lemma: random_walk_expected_hitting_size}. Indeed, applying Lemma \ref{apx_lemma: random_walk_expected_hitting_size} with the roles of $x$ and $y$ interchanged yields inequality \eqref{lemma: random_walk_expected_hitting_size_switched} with $x$ and $y$ swapped. Subsequently, since $G$ has bounded degree, we restore the desired ordering of $x$ and $y$ by paying a multiplicative constant depending on the maximum degree.
\begin{proof}[Proof of Proposition \ref{prop: poly_neighboring}]
     By Poisson thinning and a union bound, we have the following upper bound:
\begin{align} \label{eq: poly_neighboring_proof_result_1}
            \bP_{\lambda, a^2} \left( \widehat{B}_v \not\subseteq \mathcal{R}_D \right) \leq \sum_{x \in \widehat{B}_v} \exp \left(- \lambda \sum_{y \in D} \bP_y(\tau_x \le a^2 ) \right).
\end{align}
    Since $V_0$ is an $(a, \beta a)$-group net, for any $x \in \widehat{B}_v$, we have $D \subseteq B_x(4\beta a + a/3)$. Moreover, it follows from the fact that $G$ is of polynomial growth and quasi-transitive, that there exists a constant $K_3$ such that $B_v(a) \le K_3 B_v(a/3)$ for any $v$ and $a \ge 1$. Therefore, $|D| \ge |B_v(a/3)|/4 \ge |B_v(a)|/(4K_3)$. This allows us to apply Lemma \ref{lemma: random_walk_expected_hitting_size_main_text} with $K_1 = 4\beta + 1/3$ and $K_2 = 4K_3$. Consequently, the expression in \eqref{eq: poly_neighboring_proof_result_1} is upper bounded by
        \begin{align*}
            \big| B_v(4\beta a+ a/3) \big| \exp \left(- c \lambda
            \cdot\theta(a) \right),
        \end{align*}
    where $c$ is the same constant appeared in Lemma \ref{lemma: random_walk_expected_hitting_size_main_text}. Since $G$ is a quasi-transitive of polynomial growth and $\theta(a) \gg \log a$, we have the desired bound of $C' e^{-c'\lambda \theta(a)}$ for appropriate $C'$ and $c'$.
\end{proof}

We can now finally prove the main result of this subsection:

\begin{proof}[Proof of (2) of Theorem \ref{main_quasi:existence}]
    Fix $\lambda > 0$. 
    By Poisson thinning, for every vertex $x\in V$, we split $\mathcal{P}(x)$ into two disjoint sets, $\mathcal{P}^{(1)}(x)$ and $\mathcal{P}^{(2)}(x)$, such that $|\mathcal{P}^{(1)}(x)|$ and$ |\mathcal{P}^{(2)}(x)|$ are independent Poisson random variables, each with mean $\lambda/2$. Since each particle performs an independent random walk, the particles in the first set $\mathcal{P}^{(1)} := \cup_{x \in V} \mathcal{P}^{(1)}(x)$ evolve independently of those in the second set $\mathcal{P}^{(2)} := \cup_{x \in V} \mathcal{P}^{(2)}(x)$. 

    More precisely, we consider a realization the frog model where for each $x \in V,$ its particles are taken to be $\mathcal{P}^{(1)}(x)$ and its trajectories are taken in accordance to $\cup_{\omega \in \mathcal{P}^{1}(x)} (Y^{\omega}_{x}(s))_{0\leq s \leq t}$. We note that there is a slight abuse of notation in comparison to the one introduced in Section \ref{introduction}. We clarify that we mean the particles $\omega$ in the set of particles $\mathcal{P}^{(1)}(x)$.  By Poisson thinning, this defines a frog model with intensity $\lambda/2$ and lifespan $t.$ We do an analogous construction for the set $\mathcal{P}^{(2)}$.  

    By Proposition \ref{prop:net-polynomial} we know that there exists $\beta(G)>0$ such that for all $a \geq 1,$ there exists an $(a, \beta a )-$ group net of $G.$ We recall that we denote the group net (for an arbitrary $a$ to be determined below) by $G_{0}:= (V_{0},E_{0})$.
    Now, for every $v \in V_{0},$ we let the set of good vertices $\mathcal{G}^{(1)}_{B_{v}(a/3)} := \mathcal{G}^{\lambda/2, t}_{B_v(a/3)}$ to be the respective definition of the set of good vertices for the frog model defined by the particles of $\mathcal{P}^{(1)}(x)$. Analogously, for every $x\in V$ we define the set of vertices activated by $x$ with respect to $\mathcal{P}^{(1)}(x)$ as  $\mathcal{A}_x^{(1)} := \mathcal{A}_x^{\lambda/2,t, B_v(a/3)}$. 
    
    Additionally, let $D:= \mathcal{A}^{(1)}_x \cap B_v(a/3)$.  Observe that $D \subseteq B_v(a/3)$ with $|D| \geq |B_v(a/3)| / 4$, satisfying the conditions of Proposition \ref{prop: poly_neighboring}. Let $\mathcal{R}^{(2)}_D := \cup_{x \in D} \mathcal{R}^{(2)}_x$, where $\mathcal{R}^{(2)}_x := \cup_{\omega \in \mathcal{P}^{(2)}(x)}  \mathrm{R}^\omega_{x}$ the range of $D$  defined according to the frog model with particles in $\mathcal{P}^{(2)}.$ We say that $v$ is \textbf{open} if: 
    \begin{center}
        $\exists \, x \in \mathcal{G}^{(1)}_{B_{v}(a/3)}$ such that $\widehat{B}_v \subseteq \mathcal{R}^{(2)}_{\mathcal{A}^{(1)}_x \cap B_v(a/3)}$ .
    \end{center}
    We have that for every $v \in V_{0}$

    \begin{multline}  \label{eq: open_vertex}
        \bP_{\lambda, t} \left(v \textup{ is closed}\right) \leq \bP_{\lambda, t}\left(\mathcal{G}^{(1)}_{B_{v}(a/3)} = \varnothing\right)  + \\ \bP_{\lambda, t} \left( \forall x \in B_{v}(a/3), \hspace{0.5em} \widehat{B}_v \not\subseteq \mathcal{R}^{(2)}_{\mathcal{A}^{(1)}_x \cap B_v(a/3)}\, \big| \, |\mathcal{G}^{(1)}_{B_{v}(a/3)}| > 0\right). 
    \end{multline}
   Let $t:= a^{2}$. By Proposition \ref{prop: poly_internal} applied with $A=B_{v}(a/3)$ \footnote{Notice that due to the monotone coupling in $t$ (described in Sections \ref{introduction}  and \ref{sharpness})  the bound of Proposition \ref{prop: poly_internal} also applies to $a^{2},$ which is larger than $a^{2}/9.$ } and Proposition \ref{prop: poly_neighboring}, it follows that for any $\varepsilon$, there exists $a$ sufficiently large such that the right-hand side of \eqref{eq: open_vertex} is bounded by $\epsilon$ uniformly for all $v \in V_0$. We choose $a_{\star}$ such that $\sup_{v \in V_0} \bP_{\lambda, a_{\star}^2}\left(v \textup{ is open}\right) \ge 4/5,$ and recall that $p_{c}^{\text{site}}(G_{0})\leq 3/4$. Now, consider a $(a_{\star},\beta a_{\star})-$group net on $G$. Note that the $\{v \textup{ is open}\}$ and $\{w \textup{ is open}\}$ are independent events since $B_v(a_{\star}/3)$ and $B_w(a_{\star}/3)$ are disjoint and both events only depends on particles within the corresponding balls. Therefore, $\{ \mathds{1}_{v \textup{ is open}} \}_v$ stochastically dominates a supercritical Bernoulli site percolation on $G_0 = (V_0, E_0)$. 
    
    Let $\mathcal{D}_v \subseteq G_0$ be the connected open component of $v$ in the net $G_0$.  Let $\mathcal{C}_x$ be the (outward) cluster of $x \in V$ as defined in Section \ref{introduction}. From the definition of openness and a simple coupling argument, we have that for any $v \in V_0$, 
    \begin{align} \label{eq: stochastic_domination}
      \big\{|\mathcal{D}_v| = \infty \big\} \subseteq \big\{ \exists\, x \in B_v(a_{\star}/3), \, |\mathcal{C}_x| = \infty \big\} \, ,
    \end{align}
    as the existence of an infinite cluster $D_{v}$ implies that there exists a sequence of good vertices that conquer neighboring balls internally, starting a chain of activations of good vertices that go to infinity, and thus implying that $|\mathcal{C}_{x}| = \infty.$  Since almost surely there exists an infinite cluster on $G_0$, it follows from \eqref{eq: stochastic_domination} that, for some $x \in V$, we have $\bP_{\lambda, a_{\star}^2}(x \rightarrow \infty) > 0$.  We show that there exists some $x$ that reaches infinity, but we do not know which vertex it is. By the Harris-FKG inequality, since the events $\{\mathbf{0} \rightarrow x\}$ and $\{x \rightarrow \infty\}$ are increasing, we derive that $\bP_{\lambda,a_{\star}^{2}}(\mathbf{0} \rightarrow \infty) >0$ and conclude the result.
\end{proof}

 Before we end this subsection, we provide a few details about the Remark \ref{remark_heavy_tails}. Indeed, the proof of existence relies on Proposition \ref{prop:net-polynomial} and Propositions \ref{prop: poly_internal} and \ref{prop: poly_neighboring}. The proof of Proposition \ref{prop:net-polynomial} remains unchanged since it is a fact about graphs of superlinear polynomial growth, and the latter propositions rely on controlling the range of the random walk that comes through Lemma  \ref{lemma: random_walk_range}. We can prove an analogous result to Lemma \ref{lemma: random_walk_range} in the case of $P(x,y)  \asymp 1/ d_{G}(x, y)^{d + \alpha}$ with $a^2$ substituted by $\ell(a)$ and $\theta^{\star}(a)$ instead of $\theta(a)$, where    
        \begin{align*}
          \theta^{\star}(a) := \begin{cases}
                    \ell(a) & \text{if } d \ge 3 \text{ or } (d = 2 \text{ and } \alpha < 2), \\
                    \ell(a) / \log \log \ell(a) & \text{if } d = 2 \text{ and } \alpha = 2, \\
                  \ell(a) / \log \ell(a) & \text{if } d = 2 \text{ and } \alpha > 2,
                 \end{cases}
         \end{align*}
        and where 
         \begin{align*}
             \ell(a) := \begin{cases} 
                 a^\alpha & \text{if } \alpha \in (0, 2), \\
                 a^2 & \text{if } \alpha > 2, \\
                 a^2 / \log a & \text{if } \alpha = 2.
                 \end{cases}
         \end{align*}
 For a precise statement, we refer the reader to Lemma 4.3 of \cite{AngelHermonShiHeavyTailedFrogModel}. Although the lemma is stated on $\Z^d$, it is still valid for finitely generated groups of polynomial growth (see Theorem $1.7$ of \cite{Saloff-Coste_Zheng_2017}). We now begin the proof of item $(3)$ of Theorem \ref{main:existence}.

\subsection{Non-Amenable Graphs}
\label{sec:nonamenable}

In this section, we conclude the main results of the paper by proving item $(3)$ of \cref{main:existence}. In fact, we prove a stronger result by generalizing the frog model to run on an electrical network.
To state our result, we first introduce some (standard) notations.

Let $(G,W)$, be an electric network, with positive edge weights $W=\big(w(x,y): E\to\R_+\big)$. 
We say that $G$ is weight-locally finite 
if $\pi(x):= \sum_{y\in V} w(x,y)< \infty$, for every $x\in V$.  Note that there may still be infinitely many non-zero terms for any $x$.
We assume implicitly that $G$ is connected.
We let $\mathrm{P}$ be the transition matrix on $V$ defined as $\mathrm{P}(x,y) = w(x,y)/\pi(x)$, and recall that $P$ is reversible w.r.t. $\pi$.
We will use a condition analogous to bounded degrees, namely that for any $x, y \in V$, $\pi(x)/\pi(y) \leq \cK$ for some constant $\cK$, in which case we say $\pi$ is $\cK$-controlled. 

The \textbf{Cheeger constant} $\kappa = \kappa(\mathrm{P})$ of the electric network is defined by \[\kappa(\mathrm{P}) := \inf_{A, \pi(A) <\infty} \Phi_{\mathrm{P}}(A),\] where
\begin{equation}
\Phi_{\mathrm{P}}(A):= \frac{\sum_{a \in A, b \not \in A} \pi(a) \mathrm{P}(a,b)}{\pi(A) } = \frac{\sum_{a \in A, b \not \in A}w(a,b)}{\sum_{a \in A} \pi(a)} \, ,
\end{equation}
We say that $G$ is non-amenable if $\kappa >0$.

Let $p^{n}(x,y) = \bP^{\mathrm{P}}_{x}(X(n)=y)$ for $x,y\in V$, be the heat kernel, and observe that $\mathrm{P}$ is a linear operator on the Hilbert space $\ell^{2}(V,\pi)$ with inner product $\langle f,g\rangle_{\pi}:= \sum_{x \in V} \pi(x) f(x)g(x)$.
Its operator norm is
\[\norm{\mathrm{P}}_{\pi} := \sup\bigg\{\frac{\norm{\mathrm{P}f}_{\pi}}{\norm{f}_{\pi}}; f\neq0\bigg\} \, ,
\]
and its \textbf {spectral radius} is denoted $\rho = \rho(\mathrm{P})$.

It is standard (e.g. Proposition $6.6$ of \cite{MR3616205}) that for any $x,y \in V$ we have that 

\begin{equation}\label{def: spectral_radius}
\norm{\mathrm{P}}_{\pi} = \limsup_{n \to \infty} p^{n}(x,y)^{1/n}=: \rho(\mathrm{P}) \, ,
\end{equation}
and that for every $n\geq 1$
\begin{equation} \label{eq: p_n_spectral_upper}
  p^{n}(x,y) \leq \sqrt{\frac{\pi(y)}{\pi(x)}} \norm{\mathrm{P}}^{n}_{\pi} \, .
\end{equation}
Kesten's criterion (see \cite{kesten_criteria,MR1743100} and Theorem 6.7 of \cite{MR3616205} for the version we are using) states that a weighted graph $G$ is non-amenable if and only if $\rho(\mathrm{P})<1$.

\medskip

We consider below the frog model where active particles follow a continuous-time random walk with lifespan $t$, and jump rates given by $P(x,y)$ (i.e., total jump rate 1 from any $x$).
We denote the associated probability measure by $\P_{\lambda,t}^{P}$.
The main result of this section states the following: 

\begin{theorem}\label{thm: non-amenable_phase_transition}
  Let $G$ be a non-amenable electric network.
  Then, for any $\lambda > 0$, 
  \begin{equation}
    t_c(\lambda) \leq   \frac{200\mathcal{K}^{2}\left(\log_{\rho}\left(\frac{1-\rho}{32\cK}\right)+1\right)}{(1-\rho)^{2} \min\{1,\lambda\}} \, .
    \label{cond: cond_on_t_fix_lambda_supercrtical_nonamenable}
  \end{equation}
\end{theorem}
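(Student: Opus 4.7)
I would mirror the strategy of~\cite{localityconjecture,hermon_morris}: use non-amenability (i.e., $\rho<1$) to identify, inside any already-activated finite set $A$, a large subset of \emph{good} vertices whose random walks exit $A$ with substantial probability, then run an abelian frog-by-frog exploration that stochastically dominates a supercritical Galton--Watson process. Concretely, set $n^{\star}:=\bigl\lceil\log_{\rho}\!\bigl((1-\rho)/(32\cK)\bigr)\bigr\rceil+1$, so that $\cK\rho^{n^{\star}}\leq(1-\rho)/32$. Viewing $\mathrm{P}^{n^{\star}}$ as a self-adjoint operator on $\ell^{2}(V,\pi)$ of norm at most $\rho^{n^{\star}}$, applying it to $\mathbf{1}_A$ gives, for any finite $A\ni\mathbf{0}$,
\[
\sum_{x\in A}\pi(x)\,\bP_x\!\bigl(X(n^{\star})\in A\bigr)=\bigl\langle\mathbf{1}_A,\mathrm{P}^{n^{\star}}\mathbf{1}_A\bigr\rangle_{\pi}\leq\rho^{n^{\star}}\pi(A).
\]
Markov's inequality and the $\cK$-control of $\pi$ then turn this weighted estimate into the cardinality bound $|B(A)|\leq 2\cK\rho^{n^{\star}}|A|\leq(1-\rho)|A|/16$, where $B(A):=\{x\in A:\bP_x(X(n^{\star})\in A)\geq 1/2\}$. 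Hence the good set $G(A):=A\setminus B(A)$ contains at least $15|A|/16$ vertices, each of which has probability at least $1/2$ of being outside $A$ after $n^{\star}$ discrete steps.

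\textbf{Continuous time and frog thinning.} For the value of $t$ prescribed by the theorem, Chebyshev applied to $N(t)\sim\mathrm{Poisson}(t)$ yields $\bP(N(t)<n^{\star})\leq(1-\rho)^{2}/(c_1\cK^{2})$ for a suitable absolute constant $c_1$; the factor $\cK^{2}/(1-\rho)^{2}$ in the bound on $t_c$ is calibrated for precisely this purpose. Combined with the previous step, any $x\in G(A)$ has probability at least $1/3$ that one of its frogs visits a vertex outside $A$ within its lifespan. Since the $\mathrm{Poisson}(\lambda)$ frogs at $x$ are independent, the probability that some frog at $x$ activates a vertex of $A^{c}$ is at least $1-\exp(-\lambda/3)\geq c_{0}\min(\lambda,1)$, accounting for the $\min(\lambda,1)^{-1}$ factor in the bound.

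\textbf{Exploration and branching domination.} By the abelian property, I am free to reveal frogs vertex by vertex in any convenient order. At step $k$, with current activated set $A_k$, I pick an unprocessed vertex $x_k\in G(A_k)$ (such a vertex exists provided fewer than $15|A_k|/16$ vertices have been processed) and expose all of its frogs, which by the previous step activates a new vertex of $A_k^{c}$ with probability at least $c_{0}\min(\lambda,1)$. I would then couple this exploration with a Galton--Watson tree via a doubling/renewal scheme: each time $|A_k|$ doubles, open a new generation using a fresh batch of good vertices untouched in earlier generations. Independence of trajectories within a generation together with Chernoff concentration show that the coupled tree has offspring mean strictly greater than~$1$, so that it survives with positive probability, yielding $\bP_{\lambda,t}(\mathbf{0}\to\infty)>0$ and therefore $t_c(\lambda)\leq t$.

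\textbf{The hard part.} The delicate step is the branching comparison of the third paragraph. Newly activated vertices need not themselves lie in $G(A_k)$; the good-vertex classification shifts as $A_k$ grows; and one must certify that the supply of unprocessed good vertices is not exhausted before the Galton--Watson growth becomes exponential. The doubling scheme is what addresses these issues, as within each generation one only needs the good vertices available at its start, and Chernoff bounds on the independent trajectories inside a generation ensure that each new generation has a large enough activated set to continue.
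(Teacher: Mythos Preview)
Your proposal has a genuine gap in the branching comparison. Your good-vertex estimate only certifies that a frog from $x\in G(A)$ lands outside $A$ at the single time $n^\star$; consequently, each processed vertex contributes \emph{at most one} new activated vertex, and only with probability $p:=c_0\min(\lambda,1)\le 1$. In your exploration you need, at every step $k$, an unprocessed good vertex; since $|G(\cW_k)|\ge \tfrac{15}{16}|\cW_k|$ and $|\cV_k|=k$, this requires $|\cW_k|>\tfrac{16}{15}k$ for all $k$. But the increment $|\cW_{k+1}|-|\cW_k|$ that your argument controls is stochastically only a $\mathrm{Ber}(p)$, whose mean is at most $1<\tfrac{16}{15}$, so the random walk $|\cW_k|-\tfrac{16}{15}k$ has negative drift and the doubling scheme cannot be supercritical. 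Crucially, increasing $t$ does not help in your accounting: $n^\star$ depends only on $\rho,\cK$, and the only place you invoke $t$ is the Chebyshev bound $\bP(N(t)<n^\star)$ being small, which caps out once $t\gtrsim n^\star$. Your sentence that the $\min(\lambda,1)^{-1}$ factor is ``accounted for'' by $1-e^{-\lambda/3}\ge c_0\min(\lambda,1)$ is precisely the problem: this probability does not grow with $t$, so there is nothing for the large $t$ in the theorem to buy.

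The paper closes this gap by replacing your ``one vertex outside $A$'' by ``$\alpha t$ vertices outside $A$'': Lemma~\ref{non_amenable_trace_lower} shows, via a self-intersection count for the $m$-step subsampled walk and the bound $p_n(x,y)\le\sqrt{\cK}\,\rho^n$, that $\bP_x(|\mathrm{R}(t)|\le\alpha t)\le\varepsilon$ for $\alpha\asymp 1/\log_\rho((1-\rho)/\cK)$. Combined with the escape bound (Lemma~\ref{lemma: spectral_lower_bound_of_escape_probability}) this gives a set $G_A$ of density $\ge\tfrac{1-\rho}{2\cK}$ on which $\bP_x(|\mathrm{R}(t)\cap A^c|>\alpha t)\ge\tfrac{1-\rho}{4\cK}$. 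Now each exploration step adds $\alpha t$ new vertices with probability $\asymp(1-e^{-\lambda})(1-\rho)/\cK$, so the expected increment is $\asymp \alpha t\,\lambda(1-\rho)/\cK$, which \emph{does} scale with $t$; the choice of $t$ in the theorem makes this exceed the threshold $\tfrac{4\cK}{1-\rho}$ needed to keep $|G_{\cW_k}|>|\cV_k|$. Your spectral inequality $\langle\mathbf{1}_A,\mathrm{P}^{n^\star}\mathbf{1}_A\rangle_\pi\le\rho^{n^\star}\pi(A)$ is a perfectly good substitute for Lemma~\ref{lemma: spectral_lower_bound_of_escape_probability}, but you still need a range lower bound of order $t$ (the analogue of Lemma~\ref{non_amenable_trace_lower}) to make the exploration supercritical.
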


Item $(3)$ of \cref{main:existence} follows as an easy consequence of  \cref{thm: non-amenable_phase_transition}, since for graphs of bounded degree, we can set $w(x,y)=1$ for all $x,y \in V$,
which satisfies $\sup_{x,y \in V} \pi(x)/\pi(y) \leq \Delta,$ where $\Delta$ denotes the maximum degree of $G$.
From now on, we drop the dependency of $\bP_{x}$ and $\bP_{\lambda,t}$ on $\mathrm{P}$.

If we restrict our attention to $\lambda <1$, then \eqref{cond: cond_on_t_fix_lambda_supercrtical_nonamenable} says equivalently to $\lambda \cdot t_c(\lambda) \leq C$, for some $C$ depending on $\cK,\rho$.
This provides evidence to our intuition that $\lambda t$ is ``comparable'' to the number of offspring in the standard Bienaymé-Galton-Watson branching process.
We remark that the constant in \cref{thm: non-amenable_phase_transition} is far from optimal.
However, we do believe that a result in the direction of proving that, for regular graphs such as $\mathbb{Z}^{d}$ or $\mathbb{T}_{d}$, $\lambda \cdot t_{c}(\lambda) \rightarrow 1 $ as $d$ tends to infinity, for any $\lambda>0$, would be interesting.

The proof of \cref{thm: non-amenable_phase_transition} is mainly a consequence of a similar exploration argument as the one in the proof of \cref{prop: poly_internal}.
In the non-amenable case, the argument is a bit simpler, since we just need the exploration process to continue indefinitely.
To this end, we provide estimates for the trace of a random walk on non-amenable graphs, based on the following lemma due to \cite{localityconjecture}, which provides a uniform lower bound on the escape probability in terms of the spectral radius: 

\begin{lemma} \label{lemma: spectral_lower_bound_of_escape_probability}
  Let $(G,W)$ be a non-amenable electrical network as above with spectral radius $\rho$. Assume that $W$ is weight-locally finite.
  Let $A \subseteq G$ be a non-empty subset, and let $\pi_{A}(\cdot) := \pi(\cdot) / \pi(A)$ be the normalized restriction of $\pi$ to $A$.
  Also for a discrete-time random walk $X$, let $T^+_A := \inf \{n \ge 1: X(n) \in A \}$ be the \textbf{positive hitting time} of $A$.
  Then,
  \begin{align}
    \bP_{\pi_A} \left( T^+_A = \infty \right) := \sum _{x \in A}\pi_{A}(x) \bP_{x}(T^+_A = \infty) \geq 1 - \rho.
  \end{align}
\end{lemma}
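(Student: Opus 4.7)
The plan is to identify the escape probability on the left-hand side with a Dirichlet form for $\mathrm{P}$, and then to bound that form from below using the spectral-gap estimate available on non-amenable networks. Introduce the function $g \colon V \to \R$ defined by $g(x) := \bP_x(T_A < \infty)$, where $T_A := \inf\{n \geq 0 : X(n) \in A\}$, so that $g \equiv 1$ on $A$ and $g$ is $\mathrm{P}$-harmonic on $V \setminus A$. A direct computation of $\langle g, (I-\mathrm{P})g\rangle_\pi = \sum_x \pi(x)\,g(x)(g(x)-\mathrm{P}g(x))$ will show that on $V \setminus A$ the summand vanishes by harmonicity, while for $x \in A$, writing $u := 1 - g$ (which vanishes on $A$ and equals $\bP_y(T_A = \infty)$ on $V \setminus A$), one has $\mathrm{P}g(x) = 1 - \mathrm{P}u(x)$ and hence $g(x)(g(x)-\mathrm{P}g(x)) = \mathrm{P}u(x) = \bP_x(T_A^+ = \infty)$. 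Summing over $x \in A$ yields the identity
\begin{equation}\label{eq:dirichlet-identity-proposal}
\langle g,(I-\mathrm{P})g\rangle_\pi \;=\; \pi(A)\,\bP_{\pi_A}(T_A^+=\infty).
\end{equation}

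Because $\mathrm{P}$ is self-adjoint on $\ell^2(V,\pi)$ by reversibility and has operator norm $\rho$, the spectrum of $I - \mathrm{P}$ lies in $[1-\rho, 1+\rho]$, so $\langle h, (I-\mathrm{P})h\rangle_\pi \geq (1-\rho)\|h\|_\pi^2$ for every $h \in \ell^2(V,\pi)$. Since $g \equiv 1$ on $A$ we trivially have $\|g\|_\pi^2 \geq \pi(A)$, and combining this with \eqref{eq:dirichlet-identity-proposal} gives $\bP_{\pi_A}(T_A^+ = \infty) \geq 1 - \rho$, as desired.

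The main obstacle is that on an infinite graph $g$ need not belong to $\ell^2(V, \pi)$, so the spectral inequality cannot be applied to $g$ directly. I would handle this by a Dirichlet-style truncation: fix an exhaustion $D_1 \subseteq D_2 \subseteq \cdots \uparrow V$ by finite sets containing $A$, and replace $g$ with $g_n(x) := \bP_x(T_A < T_{D_n^c})$, which is supported on $D_n$ and therefore lies in $\ell^2(V,\pi)$. Each $g_n$ is $\mathrm{P}$-harmonic on $D_n \setminus A$, equals $1$ on $A$, and vanishes outside $D_n$; the same computation as above gives $\langle g_n,(I-\mathrm{P})g_n\rangle_\pi = \pi(A) - \sum_{x \in A}\pi(x)\,\mathrm{P}g_n(x)$. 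Monotonicity in $n$ together with monotone convergence shows $g_n \uparrow g$ pointwise, whence this quantity decreases to $\pi(A)\,\bP_{\pi_A}(T_A^+ = \infty)$. On the other hand, the spectral inequality applies to each $g_n$ and yields the uniform lower bound $\langle g_n,(I-\mathrm{P})g_n\rangle_\pi \geq (1-\rho)\|g_n\|_\pi^2 \geq (1-\rho)\pi(A)$, because $g_n \equiv 1$ on $A$. Passing to the limit in $n$ completes the proof.
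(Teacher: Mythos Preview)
The paper does not give its own proof of this lemma: it is quoted from \cite{localityconjecture} and then used as a black box in the proof of \cref{lemma: number_of_good_vertex_nonamenable}. So there is nothing to compare your argument against in the paper itself.

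Your argument is correct and is essentially the standard Dirichlet-form derivation of this bound. The identity \eqref{eq:dirichlet-identity-proposal} is verified exactly as you describe: harmonicity on $V\setminus A$ kills those terms, and for $x\in A$ the one-step decomposition gives $(I-\mathrm{P})g(x)=\mathrm{P}u(x)=\bP_x(T_A^+=\infty)$. The spectral inequality $\langle h,(I-\mathrm{P})h\rangle_\pi\ge(1-\rho)\|h\|_\pi^2$ is valid for $h\in\ell^2(V,\pi)$ because $\mathrm{P}$ is self-adjoint with norm $\rho$, and your truncation $g_n(x)=\bP_x(T_A<T_{D_n^c})$ is the right way to reduce to $\ell^2$: each $g_n$ has finite support, equals $1$ on $A$, is harmonic on $D_n\setminus A$, and $1-\mathrm{P}g_n(x)\downarrow 1-\mathrm{P}g(x)=\bP_x(T_A^+=\infty)$ for $x\in A$ by monotone convergence, so the limit of the Dirichlet forms is exactly $\pi(A)\,\bP_{\pi_A}(T_A^+=\infty)$.

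One small caveat: your truncation assumes the exhaustion can be taken with each $D_n$ finite and $A\subseteq D_n$, which forces $A$ finite. The lemma as stated only requires $\pi(A)<\infty$ (so that $\pi_A$ is defined), which in principle allows infinite $A$. This is harmless here, since the paper only invokes the lemma for finite $A$ (explicitly in \cref{lemma: number_of_good_vertex_nonamenable} and in the exploration argument), but if you wanted the full generality you would need a further approximation of $A$ by finite subsets.
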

\begin{lemma}\label{lemma: number_of_good_vertex_nonamenable}
    Let $(G, W)$ be a non-amenable electronic network with spectral radius $\rho$. Assume $W$ is weight-locally finite and $\cK$-controlled. 
    Let $A$ be a non-empty finite subset of $V$, then
    $$\left| \, \left\{ x \in A: \bP_x\left(T_A^{+} = \infty\right) \ge \frac{1-\rho}{2\mathcal{K}} \right\} \, \right|\geq \frac{1-\rho}{2\mathcal{K}} \cdot | A | \, .$$ 
\end{lemma}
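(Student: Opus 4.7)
The plan is to convert the weighted escape-probability bound from \cref{lemma: spectral_lower_bound_of_escape_probability} into a pointwise cardinality bound via a standard Markov-type averaging, and then trade weights for counts using the $\mathcal{K}$-control assumption on $\pi$. Denote the set in question by
\[
  B := \left\{ x \in A : \bP_x\bigl(T_A^+ = \infty\bigr) \ge \tfrac{1-\rho}{2\mathcal{K}} \right\},
\]
and write $f(x) := \bP_x(T_A^+ = \infty)$, so that $0 \le f \le 1$ pointwise.

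First, I would apply \cref{lemma: spectral_lower_bound_of_escape_probability} to the given set $A$, which yields
\[
  \sum_{x \in A} \pi(x)\, f(x) \;\ge\; (1-\rho)\,\pi(A).
\]
Splitting the sum over $B$ and $A \setminus B$, and bounding $f \le 1$ on $B$ and $f < (1-\rho)/(2\mathcal{K})$ on $A \setminus B$, I obtain
\[
  \pi(B) + \frac{1-\rho}{2\mathcal{K}}\, \pi(A \setminus B) \;\ge\; (1-\rho)\,\pi(A),
\]
which, using $\mathcal{K} \ge 1$ (valid since $\pi(x)/\pi(x) = 1 \le \mathcal{K}$), rearranges to
\[
  \pi(B) \;\ge\; (1-\rho)\,\pi(A)\left(1 - \frac{1}{2\mathcal{K}}\right) \;\ge\; \frac{1-\rho}{2}\,\pi(A).
\]

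Second, I would convert this weighted lower bound to a counting bound using the $\mathcal{K}$-control of $\pi$. Let $\pi_{\max} := \max_{x \in A} \pi(x)$ and $\pi_{\min} := \min_{x \in A} \pi(x)$; the assumption gives $\pi_{\max}/\pi_{\min} \le \mathcal{K}$. Then $\pi(B) \le \pi_{\max}\,|B|$ and $\pi(A) \ge \pi_{\min}\,|A|$, so
\[
  |B| \;\ge\; \frac{\pi(B)}{\pi_{\max}} \;\ge\; \frac{1-\rho}{2}\cdot \frac{\pi_{\min}}{\pi_{\max}}\,|A| \;\ge\; \frac{1-\rho}{2\mathcal{K}}\,|A|,
\]
which is the desired conclusion.

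There is no serious obstacle here; both steps are short and essentially forced by the hypotheses. The only minor point to watch is that the two ways in which $\mathcal{K}$ enters (the $1 - 1/(2\mathcal{K})$ factor from the averaging, and the $\pi_{\min}/\pi_{\max}$ factor from switching to cardinalities) must combine cleanly. The choice of threshold $(1-\rho)/(2\mathcal{K})$ in the statement is calibrated precisely so that the first step loses a factor of at most $1/2$ and the second step loses a factor of $\mathcal{K}$, matching the claimed lower bound $(1-\rho)/(2\mathcal{K})\cdot |A|$.
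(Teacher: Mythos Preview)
Your proof is correct and follows essentially the same approach as the paper: apply \cref{lemma: spectral_lower_bound_of_escape_probability}, perform a Markov-type split at the threshold $(1-\rho)/(2\mathcal{K})$, and use the $\mathcal{K}$-control of $\pi$ to pass between weighted sums and cardinalities. The only cosmetic difference is the order of operations: the paper first converts the $\pi$-weighted average to the uniform average $\tfrac{1}{|A|}\sum_{x\in A} f(x) \ge (1-\rho)/\mathcal{K}$ and then splits, whereas you split in the $\pi$-weighted setting and convert afterward; both orderings yield the same bound.
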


\begin{proof}
    On the one hand, it follows from Lemma \ref{lemma: spectral_lower_bound_of_escape_probability} that 
    \begin{align*}
        \sum_{x \in A} \bP_x(T_A^{+} = \infty) \cdot \frac{\cK}{|A|} \ge \sum_{x \in A} \bP_x(T_A^{+} = \infty) \cdot \frac{\max_y \pi(y)}{\min_y \pi(y)|A|} \ge \bP_{\pi_A}(T^+_A = \infty) \ge 1 - \rho.
    \end{align*}
    On the other hand, we have
    \begin{align*}
        \sum_{x \in A} \bP_x(T_A^{+} = \infty) \le \left| \, \left\{ x \in A: \bP_x\left(T_A^{+} = \infty\right) \ge \frac{1-\rho}{2\mathcal{K}} \right\} \, \right| +  \frac{1-\rho}{2\cK} \cdot |A|.
    \end{align*}
    Combining those two observations, we conclude the proof.
\end{proof}

As the reader might have noticed, the above result is stated in the context of discrete-time Markov chains. However, Lemma \ref{lemma: spectral_lower_bound_of_escape_probability} is still valid for the continuous-time random walk associated with the discrete-time random walk, simply by considering the sequence of vertices visited by it. That is, let $\{Y(s)\}_{s\geq0}$ be the continuous-time simple random walk on $(G,W)$ with heat kernel $p_{s}(x,y):= e^{s(\mathrm{P}-I)}(x,y)$. If we consider $(X(n))_{n\geq 0} $ as the sequence of vertices traversed by $Y,$ then Lemma \ref{lemma: spectral_lower_bound_of_escape_probability} holds for $X$ with $\rho(\mathrm{P}).$

The following result will be fundamental in guaranteeing a high probability for the exploration process to succeed, and we remark that it is a general statement about random walks on non-amenable graphs. For $x\in V$ we make a slight abuse of notation \footnote{This was originally introduced for the simple random walk, but it can be analogously written for the random walk associated with $\mathrm{P}$.} and let $\mathcal{R}_{x}(t) := \cup_{1 \leq i \leq \eta_{x}} \mathrm{R}_{x}^{i}(t),$ where $\mathrm{R}_{x}^{i}(t)$ denotes the trace of $\{Y^{i}_{x}(s)\}_{0\leq s \leq t}$

\begin{lemma}\label{non_amenable_trace_lower}
  Let $(G,W)$ be a non-amenable electrical network, and $R(t)$ be the trace of $\{Y(s)\}_{0\leq s \leq t}.$  For every $\eps>0$ there exist $t_0,\alpha>0$ such that for $t\geq t_0$
  \[
    \bP_x(|R(t)|\leq \alpha t) \leq \varepsilon. 
  \]
\end{lemma}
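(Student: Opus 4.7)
The plan is to prove the analog of the lemma first for the discrete-time skeleton $(X_k)_{k \ge 0}$ of $Y$ and its range $R_n := \{X_0, X_1, \dots, X_n\}$, and then transfer to continuous time via Poisson concentration of $N(t)$. The discrete argument has two ingredients: a second-moment estimate for $|R_n|$ producing a constant-probability lower bound, and a block decomposition driven by the Markov property that boosts this to probability $1-\varepsilon$.

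For the moment estimates, a standard strong Markov decomposition at the first hitting time $\tau_y := \inf\{k \ge 0 : X_k = y\}$ gives $\bE_x[\ell_y(n)] \le \bP_x(\tau_y \le n)\, G(y,y)$, where $\ell_y(n) := \sum_{k=0}^{n} \1_{\{X_k = y\}}$ and $G(y,y) := \sum_{k \ge 0} p_k(y,y) \le 1/(1-\rho)$ by \eqref{eq: p_n_spectral_upper}. Summing over $y$ using $\sum_y \ell_y(n) = n+1$ yields $\bE_x[|R_n|] \ge (1-\rho)(n+1)$. For the second moment, splitting $\{y, z \in R_n\}$ according to which of $y, z$ is hit first and using the strong Markov property gives $\bP_x(y, z \in R_n) \le \bP_x(\tau_y \le n) \bP_y(\tau_z \le n) + \bP_x(\tau_z \le n) \bP_z(\tau_y \le n)$. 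Summing over $y,z$ and using the trivial bound $\sum_z \bP_y(\tau_z \le n) = \bE_y[|R_n|] \le n+1$ produces $\bE_x[|R_n|^2] \le 3(n+1)\,\bE_x[|R_n|]$. Paley--Zygmund then delivers the constant-in-$n$ and uniform-in-$x$ bound $\bP_x(|R_n| \ge (1-\rho)(n+1)/2) \ge (1-\rho)/12 =: \delta$.

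To upgrade $\delta$ to $1-\varepsilon$, partition $[0, n]$ into $K := \lceil \log(1/\varepsilon)/\delta \rceil$ consecutive blocks of length $L := \lfloor n/K \rfloor$ and set $R^{(i)} := \{X_{(i-1)L}, \dots, X_{iL}\}$. Applying the Paley--Zygmund bound conditionally, the Markov property gives $\bP(|R^{(i)}| \ge (1-\rho)(L+1)/2 \mid \cF_{(i-1)L}) \ge \delta$ uniformly in the starting state, so the probability that no block succeeds is at most $(1-\delta)^K \le \varepsilon$. Since $R^{(i)} \subseteq R_n$ for every $i$, the success of any block gives $|R_n| \ge (1-\rho)L/2 \ge \alpha n$ for an explicit $\alpha = \alpha(\varepsilon, \rho) > 0$ and $n$ large. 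The passage to continuous time is routine: $N(t) \sim \mathrm{Poisson}(t)$ satisfies $\bP(N(t) < t/2) \le e^{-ct}$ by Chernoff, and on $\{N(t) \ge t/2\}$ one has $|R(t)| \ge |\{X_0, \dots, X_{\lfloor t/2\rfloor}\}|$, so the discrete bound applied to $n = \lfloor t/2\rfloor$ yields the lemma with $\alpha/2$ and a suitable $t_0$.

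The main obstacle I expect is the concentration step. A direct Azuma argument applied to the Doob martingale $M_k := \bE[|R_n| \mid \cF_k]$ fails because resampling a single increment $X_k$ can drastically alter the entire future trace, so the martingale differences are not uniformly bounded by a constant (they can be of order $n-k$), and Azuma then gives nothing useful. The two-step ``weak bound via the second moment, then boost via independent disjoint blocks'' circumvents this, at the modest cost of having $\alpha$ depend on $\varepsilon$, which is still consistent with the statement of the lemma.
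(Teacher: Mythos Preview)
Your argument is correct. Both your proof and the paper's exploit the on-diagonal spectral bound $p_k(y,y)\le\rho^k$, but they organize the passage from this bound to a linear lower bound on the range in different ways.

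The paper's proof is shorter and more direct: it thins the discrete skeleton to a subsequence $Y(i)=X(mi)$, bounds the expected number of coincidences $I$ among $Y(1),\dots,Y(\lfloor t/2m\rfloor)$ by a geometric sum in $\rho^m$, and applies Markov's inequality to $I$. Choosing $m$ large (depending on $\varepsilon$) makes $\bP(I>t/4m)\le\varepsilon/2$ in one step, with no intermediate ``constant probability'' stage; the range of the subsequence is then at least $t/4m$ whenever $N(t)>t/2$ and $I\le t/4m$. Your route is the classical range-analysis package: first moment via the Green's function bound $G(y,y)\le 1/(1-\rho)$, second moment via the last-exit/first-hit splitting, Paley--Zygmund for a uniform constant-probability bound, and then a block decomposition to boost to $1-\varepsilon$. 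This is longer but yields the quantitative byproduct $\bE_x[|R_n|]\ge(1-\rho)(n+1)$, and the block-boosting step is a robust template that does not rely on any special structure of the walk beyond the Markov property. In both proofs the resulting $\alpha$ depends on $\varepsilon$, which is all the lemma requires.
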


\begin{proof}

There are two ways the range can fail to be large: either the walk makes few jumps, or many jumps are to previously visited vertices. We will use a union bound on those. 
  We first observe by Chernoff's bound for the Poisson random variable $N(t)$ that $\P(N(t)\leq t/2) \leq e^{-t/4}$, where $N(t)$ denotes the number of steps performed up to time $t$.

  Let $(X(n))_{n\leq N(t)}$ be the sequence of $N$ vertices visited by the random walk from $x$.
  The sequence $(X(n))$ is the initial $N(t)$ steps of a random walk independent of $N(t)$.
  Nearby values are likely to repeat, so for $m\geq 1$ we consider the subsequence $Y(i) = X(mi)$ of every $m$-th vertex.
  Let $I$ be the number of repetitions in this subsequence with $i=\lfloor t/2m \rfloor$.
 Equation \eqref{eq: p_n_spectral_upper} implies for $i<j$ that
 
  \[
    \bP_x\big(Y(j)=Y(i)\big) = \E_x\big[ \P_x\big(Y(j)=Y(i)|Y(i)\big)\big] \leq \rho^{m(j-i)}.
  \]
  Summing over $i< j\leq t/2m$, we find that
  \[
    \bE \big[I \big] \leq \frac{t}{2m}(\rho^m+\rho^{2m}+\dots) = \frac{t\rho^m}{2m(1-\rho^m)}. \]
  By Markov's inequality, $\P(I > t/4m) \leq \frac{2\rho^m}{(1-\rho^m)}$.

   If $N(t) > t/2$ and $I\leq t/4m$ the range is at least $t/4m$. Thus
   
  \begin{align*}
    \bP_x\left(|R(t)| \leq \frac{t}{4m}\right) & \leq e^{-t/4} + \frac{2\rho^m}{(1-\rho^m)}.
  \end{align*}
  Given $1\geq \eps>0$, we can choose $t_{0}:= 4 \log (2/\varepsilon),$ and  $m:= \big \lceil \log_{\rho}(\varepsilon/8)   \big\rceil $ so that each summand is at most $\eps/2$, and $\alpha=1/4m$ gives the claim. 
\end{proof}

Now,  for $\alpha:= \frac{1}{4\left \lceil\log_{\rho}\left(\frac{1-\rho}{32\cK}\right) \right\rceil },$ define:

\begin{align}\label{def_good_G}
    G_A := \left\{ x \in A :   \bP_x \left( |\mathrm{R}(t) \cap A^c| > \alpha \cdot t\right) \ge \frac{1-\rho}{4\mathcal{K}}\right\} \, ,
\end{align}

We want to estimate the size of $G_{A}$. For that, we show that a point $x \in A$ satisfying $\bP_{x}(T_{A}^{+} = \infty) \geq (1-\rho)/2\mathcal{K},$ belongs to $G_{A}$. Take such a point, and set $\varepsilon =(1-\rho)/4\mathcal{K}.$ We have by Lemma \ref{non_amenable_trace_lower} and Bayes' Theorem, that for any $t\geq t_{0}= 4 \log\left(\frac{8\mathcal{K}}{1-\rho}\right):$

 \begin{equation}
         \bP_x \left( |\mathrm{R}(t) \cap A^c| > \alpha t \right)  \geq \bP_{x}( \{ T_{A}^{+} = \infty \} \cap \{ \mathrm{R}(t)> \alpha t \} )\ge \frac{1-\rho}{4\mathcal{K}}\, ,
   \end{equation}

 From Lemma \ref{lemma: number_of_good_vertex_nonamenable}, we conclude that 

\begin{equation}\label{number_points_G}
    |G_A | \geq \frac{1-\rho}{2\mathcal{K}} \cdot |A| .
\end{equation}
We are now ready to prove Theorem \ref{thm: non-amenable_phase_transition}. Fix $t$ such that

$$t \geq \frac{200\mathcal{K}^{2}\left(\log_{\rho}\left(\frac{1-\rho}{32\cK}\right)+1\right)}{(1-\rho)^{2} \min\{1,\lambda\}}. $$
In particular, this implies that $t> t_{0},$ thus satisfying Equation \eqref{number_points_G}, and that:

\begin{equation} \label{ineq: non-amenable_condition}
    \alpha t \frac{(1-e^{-\lambda}) (1-\rho)}{4\mathcal{K}}> \frac{6\mathcal{K}}{1-\rho},
\end{equation}
which will be used later in the proof.

Similarly to the proof of Proposition \ref{prop: poly_internal}, the main idea to prove Theorem \ref{thm: non-amenable_phase_transition} is to construct an exploration process. A key difference is that we only explore vertices of the active set when there is a non-negligible probability that a random walk starting from said vertex visits an amount proportional to $t$ outside of the vertices that have been reached by the exploration process. That is, if $\mathcal{W}$ denotes the set of vertices reached by the frogs, we only explore the trajectories of vertices belonging to $G_{\mathcal{W}}\cap \mathcal{M},$ where $\mathcal{M}$ denotes the set of active vertices.
We now prove Theorem \ref{thm: non-amenable_phase_transition} and note that a similar argument was done in the previous subsection, and so we omit a few standard calculations:
\begin{proof}[Proof of \cref{thm: non-amenable_phase_transition}]
We use the Abelian property of the frog model, and reveal trajectories of activated frogs in an order chosen so that each frog revealed is likely to reach many new vertices.
Towards this, denote by $\mathcal{W}_k$ the set of vertices reached by frogs in the first $k$ steps of the process. Initially $\mathcal{W}_0 = \{\mathbf{0}\}$ is just the origin.
The set $\cW_k$ is split into $\cV_k \cup \cM_k$, where $\cV_k$ are the vertices $x$ for which we have already revealed the number and trajectories of frogs starting at $x$, and $\cM_k$ are those vertices whose number of particles and their trajectories have not yet been revealed.
Initially $\mathbf{0}\in\cM_0$.

Let $\cG_k = G_{\cW_k}$ be the set of vertices in $\cW_k$, from which a frog is likely to reach many vertices outside $\cW_k$, as in Equation \eqref{def_good_G}. 
If $\cG_k\cap\cM_k \neq \varnothing$, pick some vertex $y$ in this intersection, and reveal the frogs from $y$. 
This has the following effect for the sets at time $k+1$:
\begin{itemize}[nosep]
    \item $y$ is moved from $\cM_{k+1}$ to $\cV_{k+1}$,
    \item vertices in $\mathcal{R}_y \setminus \cW_k$ are moved to $\cM_{k+1}$. 
\end{itemize}
If at any time the intersection $\cG_k\cap\cM_k$ is empty, we give up and stop.
We shall see that there is a positive probability this never happens, in which case $\mathbf{0}\to\infty$ occurs.

To see this, note that from Equation \eqref{number_points_G}, we know that $|\cG_k| \geq \frac{1-\rho}{2\mathcal{K}}|\cW_k|$, whereas $|\cV_k|=k$.
Thus we wish to show that with positive probability $|\cW_k| > \frac{4 \cK}{1-\rho} k$ for all $k$.
The increment $\cW_{k+1} \setminus \cW_k$ is the set of vertices outside $\cW_k$ reached by a frog from $y$.
With probability $1-e^{-\lambda}$, there is at least one frog at $y$. 
If there is such a frog, with probability at least $(1-\rho)/(4\cK)$ it reaches at least $\alpha \cdot t$ vertices outside $\cW_k$. 
Thus the change of $|\cW_k|$ stochastically dominates a random variable that is $\alpha t$ with probability $(1-e^{-\lambda})(1-\rho)/4\mathcal{K}$, and is 0 otherwise. 
It follows from \eqref{ineq: non-amenable_condition} that the random variable has expectation strictly greater than $\frac{6\cK}{1-\rho}$, there is positive probability that $|\cW_k| >  \frac{4\cK}{1-\rho} k$ for all $k$ by Chernoff's inequality, thus completing the proof.
\end{proof}

\appendix
\renewcommand{\thesection}{\Alph{section}}
\setcounter{section}{0}

\section{Appendix}
We include here the supplementary results used throughout this paper. We subdivide them based on the conditions assumed on the graph $G$.

\subsection{Quasi-Transitive Graphs of Polynomial Growth}

The main goal of this subsection is proving Lemma \ref{lemma: sub-gaussian_cts_quasi}. For that, we need a few extra definitions. Observe that Equation \eqref{gromov} implies that the growth function satisfies the \textit{volume doubling} condition:

\begin{definition}
A graph $G=(V,E)$ satisfies the \textbf{volume doubling} condition if there exists $C > 0$ such that for all $x\in V$ and $n\geq1:$
\begin{equation}
g_{x}(2n) \leq C g_{x}(n) \, .
\end{equation}
\end{definition}

For quasi-transitive graphs of polynomial growth, this is an immediate consequence of Equation \eqref{gromov}. In order to prove that $t_{\mathrm{c}}(\lambda) < \infty,$ we use sub-gaussian estimates for the heat kernel of the continuous-time random walk  $p_{t}(x,y) := \bP_x(Y(t) = y)$ on $G$. In order to establish that, we first prove them in the context of $1/2-$lazy discrete time random walks. To distinguish from the prior notation, we denote the discrete heat kernel by $p^{n}(x,y)$. We shorten $p^{1}(x,y)$ as $p(x,y)$. For $f \in \R^{V},$ we define the length of the gradient as

\begin{equation}
    |\nabla f|(x)  = \bigg( \frac{1}{2} \sum_{xy \in E} |f(x)-f(y)|^{2} p(x,y)\bigg)^{1/2}.
\end{equation}

\begin{definition}
We say that a graph $G$ satisfies the Poincaré Inequality if there exist $C>0$ and $C^{\prime} \geq 1$ such that for any $f \in \R^{V}$ and $r> 0:$
\begin{equation}
    \sum_{y \in B_x(r)} |f(x) - f_{r} (x)|^{2} \leq Cr^{2} \sum_{B_x(C^{\prime}r)} |\nabla f|^{2}(y) \, ,
\end{equation}
where  
\begin{equation*}
    f_{r}(x):= \frac{1}{|B_x(r)|} \sum_{y \in B_x(r)}f(y) \, .
\end{equation*}
\end{definition}

An important fact is that quasi-transitive graphs of polynomial growth satisfy the Poincaré inequality. This is a consequence of the Poincaré inequality for Cayley graphs of finitely generated groups (see Theorem 2.2 of \cite{kleiner-poincare}) and the stability of Poincaré inequalities under quasi-isometries (see Proposition 3.14 of \cite{mathav-harnack} and \cite{Varopoulos_Saloff-Coste_Coulhon_1993}) in combination with Trofimov's theorem. As a conclusion of Delmotte's seminal paper (see Theorem 1.7 of \cite{Delmotte1999}) there exist $c^{\prime}_{1},c^{\prime}_{2},C^{\prime}_{1},C^{\prime}_{2}>0$ such that 

\begin{equation}\label{discrete_subgaussian}
\frac{c^{\prime}_{1}}{n^{d/2}} e^{-d_{G}^{2}(x,y)/c^{\prime}_{2}n} \leq p^{n}(x,y) \leq \frac{C^{\prime}_{1}}{n^{d/2}} e^{-d_{G}^{2}(x,y)/C^{\prime}_{2}n} \, , 
\end{equation}
for every $x,y \in V, n \geq 1$ such that $d_{G}(x,y) \leq n$ (for $d_{G}(x,y)>n,$ we have $p^{n}(x,y)=0$). Back to our context, we prove that an analogous bound holds for the continuous heat kernel $p_{t}(x,y)$.

\begin{lemma} \label{lemma: sub-gaussian_cts_quasi}
Let $G$ be a quasi-transitive graph of polynomial growth. Then, there exist constants $c_{1},c_{2},C_{1},C_{2}>0$ depending on $G$ such that
\begin{equation}
  \frac{c_{1}}{t^{d/2}} e^{-d_{G}^{2}(x,y)/c_{2}t} \leq p_{t}(x,y) \leq \frac{C_{1}}{t^{d/2}} e^{-d_{G}^{2}(x,y)/C_{2}t}
\end{equation}
for all $x,y \in V, t\geq 1$ with $d_{G}(x,y) \leq t$.
\end{lemma}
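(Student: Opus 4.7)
The plan is to reduce the continuous-time bounds to the discrete-time bounds for the $1/2$-lazy walk (supplied by Delmotte's theorem in the discussion preceding the lemma) via a Poisson subordination argument.

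First, I would represent the rate-$1$ continuous-time walk $Y$ as a Poisson subordination of the $1/2$-lazy discrete walk $(Z_n)_{n \ge 0}$. Writing $p(x,y) = \frac{1}{2}\delta_{xy} + \frac{1}{2}q(x,y)$, where $q$ is the non-lazy simple random walk kernel, a short generator computation shows that $Y_{x}(t) \stackrel{d}{=} Z_{N(t)}$, where $N(t) \sim \mathrm{Po}(2t)$ is independent of $(Z_n)$. Setting $D := d_G(x,y)$ and using that $p^n(x,y) = 0$ whenever $n < D$, this gives the identity
\[
p_t(x,y) \;=\; \sum_{n \ge D} e^{-2t}\,\frac{(2t)^n}{n!}\, p^n(x,y).
\]

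For the lower bound, I would restrict the sum to $n \in [t, 3t]$. The hypothesis $D \le t$ ensures $D \le n$ throughout this range, so the discrete lower bound applies; moreover $e^{-D^{2}/(c_{2}^{\prime} n)} \ge e^{-D^{2}/(c_{2}^{\prime} t)}$ because $n \ge t$, while $n^{-d/2} \ge (3t)^{-d/2}$. Since a Chernoff bound gives $\bP(N(t) \in [t, 3t]) \ge c_0 > 0$ uniformly for $t \ge 1$, summing yields the desired lower bound with $c_1 := c_0 c_1' / 3^{d/2}$ and $c_2 := c_2'$.

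For the upper bound, I would split the sum at $n = 4t$ into a near part $A(t)$ (over $D \le n \le 4t$) and a far part $B(t)$ (over $n > 4t$). On $A(t)$, the monotonicity $e^{-D^{2}/(C_{2}^{\prime} n)} \le e^{-D^{2}/(4 C_{2}^{\prime} t)}$ for $n \le 4t$ combined with the discrete upper bound gives
\[
A(t) \;\le\; C_{1}^{\prime}\, e^{-D^{2}/(4 C_{2}^{\prime} t)}\, \bE\!\left[ N(t)^{-d/2} \mathbf{1}_{\{N(t) \ge 1\}} \right] \;\le\; C\, t^{-d/2}\, e^{-D^{2}/(4 C_{2}^{\prime} t)},
\]
where the final step uses the Chernoff bound $\bP(N(t) \le t/2) \le e^{-ct}$ for $t \ge 1$ to control the expectation by $O(t^{-d/2})$. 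On $B(t)$, the trivial bound $p^n(x,y) \le 1$ yields $B(t) \le \bP(N(t) > 4t) \le e^{-c_{4} t}$ for some $c_4 > 0$. The main (minor) technical point is absorbing $B(t)$ into the Gaussian form: since $D \le t$ implies $D^{2}/t \le t$, and since $e^{-c_{4} t/2} \le C_{d} t^{-d/2}$ for $t \ge 1$, choosing $C_2 \ge 2/c_4$ yields $B(t) \le C^{\prime} t^{-d/2} e^{-D^{2}/(C_{2} t)}$. Setting $C_2 := \max(4 C_{2}^{\prime},\, 2/c_{4})$ then combines the two parts into the claimed upper bound.
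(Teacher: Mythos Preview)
Your proposal is correct and follows essentially the same approach as the paper: both represent $p_t(x,y)$ as a $\mathrm{Po}(2t)$-mixture of the discrete lazy kernels $p^n(x,y)$, then split the sum using Chernoff bounds on the Poisson (the paper at $[t,4t]$, you at $[t,3t]$ and $4t$) and apply Delmotte's discrete estimates on the bulk. If anything, your write-up is slightly more careful than the paper's in explaining how to absorb the far-tail term $B(t)$ back into the Gaussian form via $D^2/t \le t$.
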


\begin{proof}[Proof of Lemma \ref{lemma: sub-gaussian_cts_quasi}]
First, observe that by the estimate in Equation $\eqref{chernoffupper}$, we have that for all $x > 1$:
\begin{equation}\label{chernofflower}
\bP\left( \text{Po}(t) < \left(1-\frac{1}{x}\right)t\right) \leq \exp \left( -\frac{t}{2x(x+1)} \right)\, .
\end{equation}
Let $x,y\in V$ and $t\geq1$ be chosen arbitrarily. For the upper bound, observe that Equations  \eqref{chernoffupper} and \eqref{discrete_subgaussian} imply:
\begin{align*}
    p_{t}(x,y) &= \sum_{k=d_{G}(x,y)}^{\infty}\bP( \text{Po}(2t) = k) p^{k}(x,y) \\ 
    &\leq \sum_{k= t}^{4t} \bP( \text{Po}(2t) = k) \frac{C^{\prime}_{1}}{k^{d/2}}e^{-d_{G}^{2}(x,y)/C^{\prime}_{2}k} + \frac{2 C^{\prime}_{1}}{d_{G}(x,y)^{d/2}}e^{-t/6}\\
    &\leq \frac{C_{1}}{t^{d/2}} e^{-d_{G}^{2}(x,y)/C_{2}t} \, ,
\end{align*}
for $C_{1},C_{2}>0.$ For the lower bound, combining Equation \eqref{discrete_subgaussian}  with the fact that $d_{G}(x,y) \leq t,$ and $t\geq 1$  yield:
\begin{align*}
    p_{t}(x,y) &= \sum_{k=d_{G}(x,y)}^{\infty}\bP( \text{Po}(2t) = k) p^{k}(x,y) \\
    &\geq \sum_{k= d(x,y)}^{4t} \bP( \text{Po}(2t) = k) \frac{c^{\prime}_{1}}{k^{d/2}}e^{-d_{G}^{2}(x,y)/c^{\prime}_{2}k} \\
    &\geq \frac{c_{1}}{t^{d/2}}e^{-d_{G}^{2}(x,y)/c_{2}t} \, ,
\end{align*}
for some choice of $c_{1},c_{2}>0.$
\end{proof}

\subsection{Quasi-Transitive Graphs of Superlinear Polynomial Growth}

In this subsection, we prove Lemma \ref{lemma: random_walk_range} using Lemma \ref{lemma: sub-gaussian_cts_quasi}. We first define the \textit{truncated green function} $\mathbf{G}_t: V \times V \rightarrow [0,\infty)$ as 
\begin{align}
    \mathbf{G}_t(x, y) := \int_0^t p_s(x, y) \, \mathrm{d}s \, ,
\end{align}
where $x, y \in V$ and $t \ge 0$. We begin by proving the following auxiliary lemma regarding the trace of a random walk:

\begin{lemma} \label{apx_lemma: random_walk_expected_hitting_size}
    Let $G$ be a quasi-transitive graph of superlinear polynomial growth.
    For any $K_1, K_2 \ge 1$, there exist constants $c = c(G, K_1, K_2)$ and $C = C(G)$, such that for any $a \ge 2$, $x \in V$, and $D \subseteq B_x(K_1 a)$ with $|D| \ge |B_x(a)|/K_2$, we have


    \begin{align}
        c \cdot \theta(a) \leq \sum_{y \in D} \bP_x \left( \tau_y \le a^2 \right) \leq C \cdot \theta(a) \, ,
    \end{align}
    where $\theta(a)$ is defined as 
    \begin{align*}
        \theta(a) := \begin{cases}
            a^2 & \text{if } d \geq 3, \\
            a^2 / \log a & \text{if } d = 2.
        \end{cases}
    \end{align*}
\end{lemma}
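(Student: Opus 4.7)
The plan is to relate the hitting probabilities $\bP_x(\tau_y \le a^2)$ to the truncated Green function $\mathbf{G}_{t}(x,y)$ via the strong Markov property, and then use the sub-gaussian heat-kernel bounds of Lemma \ref{lemma: sub-gaussian_cts_quasi} to control the resulting Green function ratios. Specifically, conditioning on $\tau_y$ and using the strong Markov property at that time yields the sandwich
\[
\frac{\mathbf{G}_{a^2}(x,y)}{\mathbf{G}_{a^2}(y,y)}
\;\le\; \bP_x(\tau_y \le a^2) \;\le\;
\frac{\mathbf{G}_{2a^2}(x,y)}{\mathbf{G}_{a^2}(y,y)},
\]
where the lower bound follows from $\mathbf{G}_{a^2}(x,y) \le \bP_x(\tau_y \le a^2)\,\mathbf{G}_{a^2}(y,y)$ (the walk can only accumulate occupation time at $y$ after $\tau_y$), and the upper bound uses that on $\{\tau_y \le a^2\}$ at least $a^2$ further units of time remain within $[0,2a^2]$ during which one can accumulate occupation time, contributing $\mathbf{G}_{a^2}(y,y)$ in expectation. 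This reduces the proof to two-sided estimates of Green functions, which is where Lemma \ref{lemma: sub-gaussian_cts_quasi} enters.

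I would then estimate $\mathbf{G}_{a^2}(y,y)$ by splitting the integral $\int_0^{a^2} p_s(y,y)\,ds$ at $s=1$: the $[0,1]$ part contributes a bounded quantity, while on $[1,a^2]$ Lemma \ref{lemma: sub-gaussian_cts_quasi} gives $p_s(y,y) \asymp s^{-d/2}$, so
\[
\mathbf{G}_{a^2}(y,y) \;\asymp\; \int_1^{a^2} s^{-d/2}\,ds
\;\asymp\;
\begin{cases}
 1 & \text{if } d \ge 3,\\
 \log a & \text{if } d = 2,
\end{cases}
\]
with constants uniform in $y$ thanks to quasi-transitivity. This is precisely where the logarithmic correction in $\theta(a)$ for $d=2$ originates.

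For the summed Green functions, the upper bound is immediate from $\sum_{y \in V} p_s(x,y) = 1$, yielding $\sum_{y \in D} \mathbf{G}_{2a^2}(x,y) \le 2a^2$. For the lower bound, observe that for $y \in B_x(K_1 a)$ and $a$ large enough that $K_1 a \le a^2$, Lemma \ref{lemma: sub-gaussian_cts_quasi} gives, uniformly for $s \in [a^2/2,\,a^2]$,
\[
p_s(x,y) \;\ge\; \frac{c_1}{s^{d/2}}\exp\!\Big(-\tfrac{K_1^2 a^2}{c_2\, s}\Big) \;\ge\; \frac{c}{a^{d}},
\]
so $\mathbf{G}_{a^2}(x,y) \ge c\, a^{2-d}$. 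Summing over $D$ and using $|D| \ge |B_x(a)|/K_2 \asymp a^d$ (by \eqref{gromov}) yields $\sum_{y \in D} \mathbf{G}_{a^2}(x,y) \ge c'\,a^2$.

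Assembling these three pieces via the sandwich from the first step gives
\[
\sum_{y \in D}\bP_x(\tau_y \le a^2) \;\asymp\; \frac{a^2}{\mathbf{G}_{a^2}(y,y)} \;\asymp\; \theta(a),
\]
with constants depending only on $G$, $K_1$, and $K_2$, which is the desired conclusion. I expect the main technical friction to be purely bookkeeping: ensuring the Green-function estimates are uniform across the finitely many orbits of $G$ under $\mathrm{Aut}(G)$, and verifying the case distinction $d \ge 3$ versus $d = 2$ with the correct logarithmic factor. The range of small $a$ (say $a$ comparable to $K_1$) can be handled by a finite-case adjustment of the constants, and plays no conceptual role.
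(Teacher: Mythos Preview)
Your proposal is correct and follows essentially the same approach as the paper: both establish the sandwich $\mathbf{G}_{a^2}(x,y)/\mathbf{G}_{a^2}(y,y) \le \bP_x(\tau_y \le a^2) \le \mathbf{G}_{2a^2}(x,y)/\mathbf{G}_{a^2}(y,y)$ via the strong Markov property, estimate the on-diagonal Green function by integrating the sub-gaussian bounds of Lemma~\ref{lemma: sub-gaussian_cts_quasi}, bound $\sum_{y\in D}\mathbf{G}_{2a^2}(x,y)$ above by $2a^2$ and $\sum_{y\in D}\mathbf{G}_{a^2}(x,y)$ below by integrating over $[a^2/2,a^2]$ and using $|D|\gtrsim a^d$. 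The organization and justifications are virtually identical.
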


\begin{proof}
It follows from Lemma \ref{lemma: sub-gaussian_cts_quasi} that, for any $z \in V$ and any $t \ge 1$, we have
$$\mathbf{G}_{t}(z,z) \asymp \int_{0}^{1} p_s(z,z) \mathrm{d}s + \int_{1}^{t} \frac{1}{s^{d/2}} \mathrm{d}s \, .$$
By using that $0\leq p_{s}(x,y) \leq 1,$ we conclude that for any $z \in V$ and any $t \ge 2$, we have
\begin{align} \label{eq: on_diagonal_green_function_est}
    \mathbf{G}_{t}(z, z) \asymp \begin{cases}
        1 &\text{ if $d \ge 3$,} \\
        \log t &\text{ if $d = 2$},
    \end{cases}
\end{align}
where the implicit constant depends on the graph $G$. Let $\mathcal{N}_t(y)$ be the number of visits to $y$ up to time $t$. Then, 
\begin{align} \label{ineq: hit_to_Green_lower}
    \frac{\mathbf{G}_{a^2}(x, y)}{\bP_x(\tau_y \le a^2)} = \bE_x[\mathcal{N}_{a^2}(y) | \tau_y \le a^2] \, \le \bE_y[\mathcal{N}_{a^2}(y)] = \mathbf{G}_{a^2}(y, y) \, .
\end{align}
On the other hand, we have
\begin{align} \label{ineq: hit_to_Green_upper}
    \frac{\mathbf{G}_{2a^2}(x, y)}{\bP_x(\tau_y \le a^2)} \ge \bE_x[\mathcal{N}_{2 a^2}(y) | \tau_y \le a^2] \, \ge \bE_y[\mathcal{N}_{a^2}(y)] = \mathbf{G}_{a^2}(y, y) \, .
\end{align}
It follows from \eqref{ineq: hit_to_Green_lower} that
\begin{align} \label{ineq: expected_range_to_green_lower_bound}
    \sum_{y \in D} \bP_x(\tau_y \leq a^2) \ge \frac{\sum_{y \in D} \mathbf{G}_{a^2}(x, y)}{\mathbf{G}_{a^2}(y, y)} \, .
\end{align}
Moreover, we have 
\begin{align} \label{ineq: number_of_visits_lower_bound}
    \sum_{y \in D} \mathbf{G}_{a^2}(x, y) \ge \sum_{y \in D} \int_{\frac{a^2}{2}}^{a^2} p_s(x, y) \, ds \ge \frac{a^2 |B_x(a)|}{2K_2} \left(\frac{c_1}{a^d} \exp\left(-\frac{2 K_1^2}{c_2}\right)\right) \gtrsim_{G, K_1, K_2} a^2 \, ,
\end{align}
which combined with \eqref{eq: on_diagonal_green_function_est} and \eqref{ineq: expected_range_to_green_lower_bound} gives the desired lower bound. Similarly, \eqref{ineq: hit_to_Green_upper} gives that
\begin{align*}
    \sum_{y \in D} \bP_x(\tau_y \leq a^2) \le \frac{\sum_{y \in D} \mathbf{G}_{2a^2}(x, y)}{\mathbf{G}_{a^2}(y, y)} \le \frac{2a^2}{\mathbf{G}_{a^2}(y, y)} \le C \cdot \theta(a) \, ,
\end{align*}
where $C$ depends on the graph $G$. 
    
\end{proof}

\begin{proof}[Proof of Lemma \ref{lemma: random_walk_range}]
It follows from Lemma \ref{apx_lemma: random_walk_expected_hitting_size} with $D:= B_x(a)\setminus H, $ which satisfies $|D| \geq |B_{x}(a)|/4,$ that for any $y \in B_{x}(a):$ 
\begin{align} \label{ineq: lower_bound_expected_new_range}
    c \cdot \theta(a) \leq \bE_y\left[ \, \left| \widetilde{\mathrm{R}}^{\text{H}}(a^2) \right| \, \right]
    &= \sum_{z \in B_x(a) \setminus H} \bP_y(\tau_z \leq a^2) \leq C \cdot \theta(a) \, .
\end{align}

Let $b:= C \cdot \theta(a)$, and for \( k \in \N^+ \), define \( A(k) := \max_{y \in B_x(a)} \bP_y(|\widetilde{\mathrm{R}}^{\text{H}}(a^2)| \geq 2kb) \). Observe that by Markov's inequality \( A(1) \leq \frac{1}{2} \). Moreover, by the Strong Markov property, we have \( A(k_1 + k_2) \leq A(k_1)\, A(k_2) \) for any \( k_1, k_2 \in \N^+ \). This implies that 

\begin{equation}\label{secondmoment}
\bE_y\Big[|\widetilde{\mathrm{R}}^{\text{H}}(a^2)|^2\Big] = \sum_{\ell \geq 1} \bP_{y} \Big(|\widetilde{\mathrm{R}}^{\text{H}}(a^2)|^{2} \geq \ell \Big) \leq C' b^2 \, ,
\end{equation}
where \( C' \) is an absolute constant. The final result follows from the Paley–Zygmund inequality combined with the first and second moment estimates we obtained in Equations \eqref{ineq: lower_bound_expected_new_range} and \eqref{secondmoment}:
\begin{align}
    \mathbb{P}_y \left( \left| \widetilde{\mathrm{R}}^{\text{H}}(a^2) \right| \geq \frac{c \cdot \theta(a)}{2} \right) \geq \frac{\bE_y\left[ \left| \widetilde{\mathrm{R}}^{\text{H}}(a^2) \right| \right]^2}{4 \, \bE_y \left[ |\widetilde{\mathrm{R}}^{\text{H}}(a^2)|^2 \right]} \geq \frac{c^2}{4 C' C^2} \, , 
\end{align}
concluding the results in the appendix.
\end{proof}

\printbibliography

\end{document}